\theoremstyle{plain}
\newtheorem{prop}[subsubsection]{Proposition}
\newtheorem{thm}[subsubsection]{Theorem}
\numberwithin{equation}{subsubsection}
\title{The extended rational homotopy theory of operads}
\date{April 2, 2018}
\author{Benoit Fresse}
\address{Univ. Lille, CNRS, UMR 8524 - Laboratoire Paul Painlev\'e, F-59000 Lille, France}
\email{Benoit.Fresse@math.univ-lille1.fr}
\thanks{This research is supported in part by Labex ANR-11-LABX-0007-01 ``CEMPI''.
The author is grateful to Thomas Willwacher for motivating exchanges
which are at the origin of this work}
\subjclass{Primary: 18D50; Secondary: 55P62, 18G55}
\DeclareMathOperator{\kk}{\mathbb{k}}   
\DeclareMathOperator{\NN}{\mathbb{N}}   
\DeclareMathOperator{\QQ}{\mathbb{Q}}   
\DeclareMathOperator{\RR}{\mathbb{R}}   
\DeclareMathOperator{\DD}{\mathbb{D}}   
\DeclareMathOperator{\CCat}{\mathcal{C}}            
\DeclareMathOperator{\Mod}{\mathcal{M}\mathit{od}}  
\DeclareMathOperator{\Top}{\mathcal{T}\mathit{op}}  
\DeclareMathOperator{\Simp}{\mathit{s}\mathcal{S}\mathit{et}}   
\DeclareMathOperator{\ComCat}{\mathcal{C}\mathit{om}}   
\DeclareMathOperator{\Hopf}{\mathcal{H}\mathit{opf}}    
\DeclareMathOperator{\Seq}{\mathcal{S}\mathit{eq}}  
\DeclareMathOperator{\Op}{\mathcal{O}\mathit{p}}    
\DeclareMathOperator{\dg}{\mathit{dg}}      
\DeclareMathOperator{\simp}{\mathit{s}}     
\DeclareMathOperator{\Mor}{\mathtt{Mor}}    
\DeclareMathOperator{\Map}{\mathtt{Map}}    
\DeclareMathOperator{\Id}{\mathit{Id}}      
\DeclareMathOperator{\pt}{\mathit{pt}}      
\DeclareMathOperator{\unit}{\mathbb{1}}     
\DeclareMathOperator{\FreeOp}{\mathbb{F}}   
\DeclareMathOperator{\Sym}{\mathbb{S}}      
\DeclareMathOperator{\Res}{\mathtt{Res}}        
\DeclareMathOperator{\SO}{\mathtt{SO}}
\DeclareMathOperator{\Sing}{\mathtt{Sing}}      
\DeclareMathOperator{\DGB}{\mathtt{B}}          
\DeclareMathOperator{\DGG}{\mathtt{G}}
\DeclareMathOperator{\DGH}{\mathtt{H}}          
\DeclareMathOperator{\DGL}{\mathtt{L}}
\DeclareMathOperator{\DGN}{\mathtt{N}}          
\DeclareMathOperator{\DGR}{\mathtt{R}}
\DeclareMathOperator{\DGOmega}{\mathtt{\Omega}}          
\DeclareMathOperator{\Tot}{\mathtt{Tot}}        
\DeclareMathAlphabet{\mathsfit}{OT1}{cmss}{m}{sl}   
\DeclareMathOperator{\AOp}{\mathsfit{A}}
\DeclareMathOperator{\BOp}{\mathsfit{B}}
\DeclareMathOperator{\COp}{\mathsfit{C}}
\DeclareMathOperator{\DOp}{\mathsfit{D}}        
\DeclareMathOperator{\IOp}{\mathsfit{I}}        
\DeclareMathOperator{\KOp}{\mathsfit{K}}
\DeclareMathOperator{\LOp}{\mathsfit{L}}
\DeclareMathOperator{\MOp}{\mathsfit{M}}
\DeclareMathOperator{\NOp}{\mathsfit{N}}
\DeclareMathOperator{\POp}{\mathsfit{P}}        
\DeclareMathOperator{\QOp}{\mathsfit{Q}}        
\DeclareMathOperator{\ROp}{\mathsfit{R}}        
\DeclareMathOperator{\SOp}{\mathsfit{S}}
\DeclareMathOperator{\ComOp}{\mathsfit{Com}}    
\DeclareMathOperator{\rset}{\underline{\mathsf{r}}}     
\DeclareMathOperator{\stree}{\underline{\mathsf{S}}}    
\DeclareMathOperator{\ttree}{\underline{\mathsf{T}}}    
\DeclareMathOperator{\thetatree}{\underline{\mathsf{\Theta}}}
\DeclareMathOperator{\unittree}{\underline{\mathsf{I}}}
\begin{document}

\begin{abstract}
In this paper, we set up a rational homotopy theory for operads in simplicial sets whose term of arity one
is not necessarily reduced to an operadic unit,
extending results obtained by the author in the book ``Homotopy of operads
and Grothendieck-Teichm\"uller groups''.
In short, we prove that the rational homotopy type of such an operad is determined
by a cooperad in cochain differential graded algebras (a cochain Hopf dg-cooperad for short)
as soon as the Sullivan rational homotopy theory works
for the spaces underlying our operad (e.g. when these spaces
are connected, nilpotent,
and have finite type rational cohomology groups).
\end{abstract}

\maketitle

\begin{center}\emph{This paper is dedicated to Tornike Kadeishvili for his 70th anniversary}\end{center}

\section*{Introduction}

In~\cite{FresseBook}, the author proved that the Sullivan model for the rational homotopy theory of spaces
admits an extension to the category of operads in simplicial sets $\POp$ whose term of arity one
is reduced to a one-point set $\POp(1) = *$.
The goal of this paper is to explain the definition of an extension of this operadic Sullivan model for operads
that do not necessarily reduce to a one-point set in arity one.
This extension enables us to apply our rational homotopy theory to the framed little discs operads $\DOp_n^{fr}$,
for which we have $\DOp_n^{fr}(1)\sim\SO(n)$, where $\SO(n)$ denotes the group of rotations
of the Euclidean space $\RR^n$. To be more precise, we then pass to real coefficients,
and we prove, by relying on the formality result of~\cite{KhoroshkinWillwacher}, that the real homotopy type of the operad $\DOp_n^{fr}$
admits a model which we get by taking a realization $<\DGH^*(\DOp_n^{fr})>$ (in the sense of Sullivan)
of the cohomology with real coefficients of this operad $\DGH^*(\DOp_n^{fr}) = \DGH^*(\DOp_n^{fr},\RR)$
when $n$ is even.

Briefly recall that the Sullivan model of operads which we consider in~\cite{FresseBook}
consists of cooperads in the category of commutative cochain dg-algebras,
where a cooperad refers to a structure
which is dual to an operad in the categorical sense.
To summarize, a cooperad in commutative cochain dg-algebras $\AOp$ (a Hopf cochain dg-cooperad for short)
consists of a collection of commutative cochain dg-algebras $\AOp(r)$, $r>0$,
so that the symmetric group on $r$ letters $\Sigma_r$ acts on $\AOp(r)$ through dg-algebra isomorphisms,
for each $r>0$,
and we have composition coproducts $\circ_i^*: \AOp(k+l-1)\rightarrow\AOp(k)\otimes\AOp(l)$, defined in the category of dg-algebras,
which satisfy the dual of the equivariance, associativity, and unit relations
of the composition products of an operad.
In what follows, we generally take the field of rational numbers as a ground field (we only pass to real coefficients
when we address the applications to the framed little discs operads),
and we accordingly assume, to be precise, that our commutative cochain dg-algebras
are defined over this field $\kk = \QQ$.

In~\cite{FresseBook}, in order to define a rational model of operads satisfying $\POp(1) = *$,
we restrict ourselves to Hopf cochain dg-cooperads satisfying $\AOp(1) = \QQ$.
This condition ensures that our cooperads are conilpotent, in the sense that we can not iterate the composition coproducts
indefinitely. This property simplifies the definition of our model.
In particular, we prove in~\cite{FresseBook} that the category of Hopf cochain dg-cooperads
such that $\AOp(1) = \QQ$ inherits a model category structure with the quasi-isomorphisms of Hopf cochain dg-cooperads
as class of weak-equivalences.

Let $\DGOmega^*: \Simp^{op}\rightarrow\dg^*\ComCat$ now denote the functor of piecewise linear differential forms
from the category of simplicial sets $\Simp$ to the category of commutative cochain dg-algebras $\dg^*\ComCat$,
such as defined by Sullivan in~\cite{Sullivan}. (In what follows, we also refer to~\cite[\S II.7]{FresseBook} for a general account of the definition
of the Sullivan model.)
Let $\DGG: \dg^*\ComCat\rightarrow\Simp^{op}$ denote the left adjoint of this functor, which is defined
by $\DGG(A) = \Mor_{\dg^*\ComCat}(A,\DGOmega^*(\Delta^{\bullet}))$,
for any $A\in\dg^*\ComCat$, where $\Delta^{\bullet}$ denotes the cosimplicial object formed by the collection of the simplices $\Delta^n$, $n\in\NN$,
in the category of simplicial sets $\Simp$.
Briefly recall that this adjunction $\DGG: \dg^*\ComCat\rightleftarrows\Simp^{op} :\DGOmega^*$ defines a Quillen adjunction,
and that the image of a connected simplicial set $X\in\Simp$
under the composite of the derived functors of this Quillen pair is identified with the rationalization of the space $X$
under mild finiteness and nilpotence assumptions.
To be explicit, we have the identity $X^{\QQ} = \DGL\DGG(\DGOmega^*(X))$, where we use the notation $\DGL\DGG(-)$
for the left derived functor of the functor $\DGG: \dg^*\ComCat\rightarrow\Simp^{op}$.
(We do not need to derive the Sullivan functor because all simplicial sets form cofibrant objects in the category of simplicial sets.)

In~\cite{FresseBook}, we mainly observe that the adjunction $\DGG: \dg^*\ComCat\rightleftarrows\Simp^{op} :\DGOmega^*$
admits an extension $\DGG: \dg^*\Hopf\Op_{01}^c\rightleftarrows\Simp\Op_{\varnothing 1}^{op} :\DGOmega^*_{\sharp}$,
where $\Simp\Op_{\varnothing 1}$ denotes the category of operads in simplicial sets such that $\POp(1) = *$
and $\dg^*\Hopf\Op_{01}^c$ denotes the category of Hopf cochain dg-cooperads
such that $\AOp(1) = \QQ$.
The functor $\DGG: \dg^*\Hopf\Op_{01}^c\rightarrow\Simp\Op_{\varnothing 1}^{op}$ is defined by an arity-wise
application of the left adjoint of the Sullivan functor $\DGG: \dg^*\ComCat\rightarrow\Simp^{op}$.
We explicitly have $\DGG(\AOp)(r) = \DGG(\AOp(r))$, for each $r>0$,
for any $\AOp\in\dg^*\Hopf\Op_{01}^c$.
In fact, the Sullivan functor $\DGOmega^*: \Simp^{op}\rightarrow\dg^*\ComCat$ does not preserve cooperad structures,
but we just prove in~\cite{FresseBook} that we can use the adjoint functor lifting theorem to fix this problem
in order to define the right adjoint functor $\DGOmega^*_{\sharp}: \Simp\Op_{\varnothing 1}^{op}\rightarrow\dg^*\Hopf\Op_{01}^c$
of our adjunction relation between the category of operads $\Simp\Op_{\varnothing 1}$
and the category of Hopf cochain dg-cooperad $\dg^*\Hopf\Op_{01}^c$. Then we set $\POp^{\QQ} = \DGL\DGG(\DGOmega^*_{\sharp}(\POp))$
in order to define the rationalization of an operad in simplicial sets $\POp\in\Simp\Op_{\varnothing 1}$.
In this construction, we just need to assume that the operad $\POp$ is cofibrant (we take a cofibrant resolution of $\POp$ otherwise).
In~\cite{FresseBook}, we check that, under mild finiteness assumptions,
we have an arity-wise weak-equivalence $\DGOmega^*_{\sharp}(\POp)(r)\sim\DGOmega^*(\POp(r))$
when we consider the collection of commutative cochain dg-algebras $\DGOmega^*_{\sharp}(\POp)(r)$
which underlie our operadic enhancement of the Sullivan model $\DGOmega^*_{\sharp}(\POp)$.
This result implies that our operadic rationalization functor $\POp^{\QQ}$
reduces to the Sullivan rationalization of spaces
arity-wise. We explicitly have a weak-equivalence of simplicial sets $\POp^{\QQ}(r)\sim\POp(r)^{\QQ}$
in each arity $r>0$, where we again consider the Sullivan rationalization $X^{\QQ}$
of the simplicial set $X = \POp(r)$
on the right-hand side.

To extend this theory to the category of operads that do not necessarily reduce to a one-point set in arity one,
we consider general conilpotent Hopf cochain dg-cooperads $\AOp$,
for which the term of arity one $\AOp(1)$ does not necessarily reduce to the ground field $\QQ$,
but where the composition coproducts of a given element vanish after enough iterations.
We make this condition more precise in the first section of the paper.

We quickly check that the definition of our model structure on Hopf cochain dg-cooperads
extends to this category, as well as the definition
of our operadic upgrading of the Sullivan functor $\DGOmega^*_{\sharp}: \POp\mapsto\DGOmega^*_{\sharp}(\POp)$
for the operads $\POp$ which do not necessarily reduce to a one-point set
in arity one.
We again prove that we have a weak-equivalence of commutative cochain dg-algebras in each arity $\DGOmega^*_{\sharp}(\POp)(r)\sim\DGOmega^*(\POp(r))$,
but we need new argument lines in order to establish this result.
Indeed, the idea of~\cite{FresseBook} is to reduce the verification to the case of a free operad.
But in order to prove that we have arity-wise weak-equivalences $\DGOmega^*_{\sharp}(\POp)(r)\sim\DGOmega^*(\POp(r))$
we need to assume that the simplicial set $\POp(1)$ is connected
when this simplicial set is not reduced to a point,
and this condition is not fulfilled by the free operad.
The main idea is to use an operadic analogue of the James construction, rather than the usual free operad,
in order to adapt our arguments.
This verification represents the crux of our construction and is explained with full details in the second section
of the paper. Then we give a brief survey of the applications
of our construction to the definition
of the model for the rational homotopy of operads.
We just briefly check that our results
extend without change
as soon as we have the arity-wise weak-equivalences $\DGOmega^*_{\sharp}(\POp)(r)\sim\DGOmega^*(\POp(r))$
for our operadic upgrading of the Sullivan functor $\DGOmega^*_{\sharp}: \POp\mapsto\DGOmega^*_{\sharp}(\POp)$.
We devote the third section of the paper to this survey.

We have not been precise about the component of arity zero of our operads so far. We generally assume $\POp(0) = \varnothing$.
But in~\cite{FresseBook} we explain that we can model the structure of an operad $\POp_+$ satisfying $\POp_+(0) = *$
by providing the operad $\POp$ such that $\POp(0) = \varnothing$ and $\POp(r) = \POp_+(r)$ for $r>0$
with extra restriction operators $u^*: \POp(l)\rightarrow\POp(k)$,
associated to the injective maps $u: \{1<\dots<k\}\rightarrow\{1<\dots<l\}$,
and which reflect composites with the operation of arity zero $*\in\POp_+(0)$
in the operad $\POp_+$.
We call this structure a $\Lambda$-operad, where $\Lambda$ refers to the category which has the finite ordinals $\rset = \{1<\dots<r\}$
as objects and the injective maps between finite ordinals as morphisms.
Recall also that we use the phrase `unitary operads' for the category of operads such that $\POp(0) = *$.
Thus, the observation of~\cite{FresseBook} is that the category of unitary operads
is isomorphic to the category of $\Lambda$-operads.
In~\cite{FresseBook}, the phrase `non-unitary operads' is also used for the category of operads such that $\POp(0) = \varnothing$,
but we do not use this terminology in this paper, because we tacitely assume that our operads
are non-unitary in general.

We can dualize the notion of a $\Lambda$-operad to define a counterpart of this notion in the category of Hopf cooperads.
In~\cite{FresseBook}, we use Hopf $\Lambda$-cooperad structures with $\AOp(1) = \QQ$
in order to define a model for the rational homotopy of $\Lambda$-operads satisfying $\POp(0) = \varnothing$
and $\POp(1) = *$, and equivalently, for the rational homotopy of operads such that $\POp_+(0) = *$
and $\POp_+(1) = *$.
In this paper, we check, to complete our constructions, that we can also extend this model for the rational homotopy of unitary operads
to the case of operads with an arbitrary term in arity one. We address this subject in the fourth section of the paper.

Finally, we outline the applications of our constructions to the study of the framed little discs operads
in the concluding section of the paper.

\section{The model category of cochain dg-cooperads}\label{sec:modelcategories}
We explain the definition of our category of Hopf cochain dg-cooperads with full details in this section.
We also check that this category inherits a model structure.
In a preliminary step, we review the general definition of a cooperad.

\subsubsection{The general definition of a cooperad}\label{cooperads:general definition}
Briefly recall that a cooperad $\COp$ in a symmetric monoidal category $\CCat$ consists of a collection of objects $\COp(r)\in\CCat$, $r>0$,
together with an action of the symmetric group on $r$ letters on $\COp(r)$ for each $r>0$,
a counit morphism $\eta^*: \COp(1)\rightarrow\unit$,
where $\unit$ denotes the unit object of our symmetric monoidal category $\CCat$,
and composition coproducts $\circ_i^*: \COp(k+l-1)\rightarrow\COp(k)\otimes\COp(l)$,
defined for all $k,l>0$, $i = 1,\dots,k$,
and which satisfy natural equivariance, counit, and coassociativity relations. These axioms imply that the composition coproducts of a cooperad
give rise to tree-wise composition operations $\rho_{\ttree}: \COp(r)\rightarrow\FreeOp^c_{\ttree}(\COp)$,
for each directed tree $\ttree$ with $r$-ingoing leaves, where $\FreeOp^c_{\ttree}(\COp)$
denotes the tensor product of cooperad components $\COp(r_v)$
associated to the vertices of our trees $v\in V(\ttree)$,
with an arity $r_v$ that correspond to the number of ingoing edges
of the vertex $v$ (see~\cite[Theorem II.9.1.9 and \S C.1]{FresseBook}).

In what follows, we only consider cooperads in the category of cochain graded dg-modules $\CCat = \dg^*\Mod$ (with $\kk = \QQ$ as ground field),
and in the category of commutative cochain dg-algebras $\CCat = \dg^*\ComCat$. We go back to the definition of these categories
later on, when we explain the definition of our model structure on cooperads.
In our constructions, we use the zero object of the category of cochain graded dg-modules, the direct sum operation of cochain graded dg-modules,
and the obvious lifting of the direct sum of cochain graded dg-modules
to the category of commutative cochain dg-algebras.
In our subsequent formulas, we also use the identity $\unit = \QQ$ for the unit object
of these categories $\CCat = \dg^*\Mod,\dg^*\ComCat$.

Let $\bar{\COp}$ denote the collection such that $\bar{\COp}(1) = \ker(\eta^*: \COp(1)\rightarrow\unit)$
and $\bar{\COp}(r) = \COp(r)$ for $r>0$.
In what follows, we assume that our cooperads are equipped with a coaugmentation $\epsilon^*: \unit\rightarrow\COp(1)$
which defines a section of the counit morphism of $\eta^*: \COp(1)\rightarrow\unit$
so that we have can naturally regard the collection $\bar{\COp}$
as a direct summand of the collection $\COp$
underlying our cooperad. We refer to this collection as the coaugmentation coideal of our cooperad.
Then we consider the reduced treewise coproducts $\bar{\rho}_{\ttree}: \bar{\COp}(r)\rightarrow\FreeOp^c_{\ttree}(\bar{\COp})$
given by the composite of the treewise coproducts associated to our cooperad on $\bar{\COp}(r)$
with the morphism $\COp(r_v)\rightarrow\bar{\COp}(r_v)$ induced by the canonical projection $\COp\rightarrow\bar{\COp}$
on each factor $\COp(r_v)$, $v\in V(\ttree)$, of the treewise tensor product $\FreeOp^c_{\ttree}(\COp)(r) = \bigotimes_{v\in V(\ttree)}\COp(r_v)$.
For our purpose, we assume in our definition of a cooperad
that the reduced treewise composition coproducts $\bar{\rho}_{\ttree}(\gamma)\in\FreeOp^c_{\ttree}(\bar{\COp})$
associated to a given element $\gamma\in\bar{\COp}(r)$ vanish for all but a finite number of trees $\ttree$.
We refer to this requirement as the conilpotence condition.
We use the notation $\Op_0^c = \CCat\Op_0^c$ for this category of conilpotent cooperads in $\CCat$,
where we assume that the morphisms $\phi: \COp\rightarrow\DOp$
preserve all structure morphisms attached our objects,
including the coaugmentations.

\subsubsection{The definition of cofree cooperads}
In our subsequent constructions, we notably use the conilpotence condition when we define cofree objects in $\CCat\Op_0^c$.
To be explicit, let $\Seq_{>0}^c = \CCat\Seq_{>0}^c$ denote the category formed by the collections $\NOp = \{\NOp(r),r>0\}$
such that $\NOp(r)\in\CCat$ is equipped with an action of the symmetric group on $r$ letters $\Sigma_r$,
for each $r>0$ (the category of symmetric sequences in the terminology of~\cite[\S II.8.1]{FresseBook}).
We set $\FreeOp^c(\NOp)(r) = \bigoplus_{[\ttree]}\FreeOp_{\ttree}^c(\NOp)$, for each $r>0$,
where the direct sum ranges over a set of representatives of isomorphism classes
of trees with $r$ ingoing edges.
This collection inherits a natural cooperad structure
with coproduct operations $\circ_i^*: \FreeOp^c(\NOp)(k+l-1)\rightarrow\FreeOp^c(\NOp)(k)\otimes\FreeOp^c(\NOp)(l)$
given termwise by the sum of the obvious isomorphisms $\FreeOp_{\thetatree}^c(\NOp)\xrightarrow{\simeq}\FreeOp_{\stree}^c(\NOp)\otimes\FreeOp_{\ttree}^c(\NOp)$
associated to the tree decompositions $\thetatree\simeq\stree\circ_i\ttree$,
where $\stree\circ_i\ttree$ denotes the tree obtained by plugging in the outgoing edge of the tree $\ttree$
into the $i$th ingoing edge of the tree $\stree$.

In comparison with the construction of~\cite[Theorem II.9.1.9 and \S C.1]{FresseBook},
we just allow vertices $v$ with a single ingoing edge $r_v = 1$ in our trees.
Note simply that the number of tree decompositions $\thetatree\simeq\stree\circ_i\ttree$
is still finite in this setting.
Let us mention that we assume by convention $r_v>0$ for each vertex $v$ in the definition of a tree $\ttree$,
because our symmetric sequences $\NOp = \{\NOp(r),r>0\}$
are only defined in arity $r>0$.
The counit morphism associated to the cofree cooperad $\eta^*: \FreeOp^c(\NOp)(1)\rightarrow\unit$
is given by the projection of our sum onto the summand $\FreeOp_{\unittree}^c(\NOp)$
associated to the unit tree $\ttree = \unittree$, which has no vertex and consists of a single edge,
opened at all extremities.
The coaugmentation $\epsilon^*: \unit\rightarrow\FreeOp^c(\NOp)(1)$
is given by the canonical embedding of this summand $\FreeOp_{\unittree}^c(\NOp)$
into $\FreeOp^c(\NOp)(1)$. We easily check that all but a finite number of reduced treewise coproducts
vanish on each summand $\FreeOp_{\ttree}^c(\NOp)$
of the cofree cooperad, and the definition of the cofree cooperad $\FreeOp^c(\NOp)$ as a direct sum of these summands implies
that this object does satisfy the conilpotence condition
of our definition of a cooperad.

We easily check that the functor $\FreeOp^c: \NOp\rightarrow\FreeOp^c(\NOp)$
defines a right adjoint of the coaugmentation coideal functor $\overline{\omega}: \COp\mapsto\bar{\COp}$
from the category of cooperads $\Op_0^c$
to the category of symmetric sequences $\Seq_{>0}^c$.
The unit morphism of this adjunction $\rho: \COp\rightarrow\FreeOp^c(\bar{\COp})$
is the morphism given by the reduced treewise composition coproducts
of our cooperad $\bar{\rho}_{\ttree}: \bar{\COp}(r)\rightarrow\FreeOp^c_{\ttree}(\bar{\COp})$
on each term of our sum such that $\ttree\not=\unittree$
and by the counit morphism $\eta^*: \COp(1)\rightarrow\unit$
on the summand associated to the unit tree $\ttree = \unittree$.

\subsubsection{The model category of cooperads in cochain graded dg-modules}\label{cooperads:model structure}
The category of cochain graded dg-modules $\dg^*\Mod$ consists of the non-negatively upper graded modules $K = \oplus_{n\in\NN} K^n$
equipped with a differential $\delta: K\rightarrow K$
which raises degrees by one. We equip this category $\dg^*\Mod$
with the usual tensor product of differential graded modules,
with a symmetry isomorphism
which follows the usual sign rule of differential graded algebra.
For simplicity, we assume that we take $\kk = \QQ$ as a ground ring in all our definitions,
and that our modules are defined over this ground field.
(We are just consider the obvious extension of our constructions to the case $\kk = \RR$ in the concluding section of the paper.)

In what follows, we generally use the prefix `dg' for any category of objects defined in a category of differential graded modules,
whereas the phrase `cochain graded' refers to the range
of grading in the definition our dg-objects. In the case of commutative algebras and cooperads,
we just keep the adjective `cochain'
in our terminology. In particular, we use the expression cochain dg-cooperads
for the category of operads in $\dg^*\Mod$. For short, we also
adopt the notation $\dg^*\Op_0^c = \dg^*\Mod\Op_0^c$
for this category of cooperads.

The category of cochain graded dg-modules is equipped with a model structure such that the weak-equivalences are the morphisms $f: K\xrightarrow{\sim}L$
which induce an isomorphism in homology (the quasi-isomorphisms in the standard terminology of differential graded algebra),
the cofibrations are the morphisms $f: K\rightarrowtail L$ which are injective in positive degrees,
and the fibrations are the morphisms $f: K\twoheadrightarrow L$
which are surjective in all degrees.
We transport this model structure to the category of cochain dg-cooperads.
We explicitly assume that a morphism of cochain dg-cooperads is:
\begin{enumerate}
\item a weak-equivalence $\phi: \COp\xrightarrow{\sim}\DOp$
if this morphism defines a weak-equivalence of cochain graded dg-modules $\phi: \COp(r)\xrightarrow{\sim}\DOp(r)$
in each arity $r>0$ (thus, if this morphism induces an isomorphism in cohomology),
\item a cofibration $\phi: \COp\rightarrowtail\DOp$
if this morphism defines a cofibration of cochain graded dg-modules $\phi: \COp(r)\rightarrowtail\DOp(r)$
in each arity $r>0$ (thus, if this morphism is injective in positive degrees arity-wise),
\item and a fibration $\phi: \COp\twoheadrightarrow\DOp$
if this morphism has the right lifting property with respect to the class of acyclic cofibrations
determined by the above definitions.
\end{enumerate}
We then have the following statement:

\begin{thm}\label{thm:cooperad model category}
The definitions of the previous paragraph provide the category of cochain dg-cooperads $\dg^*\Op_0^c$
with a valid model structure.
Furthermore, this category is cofibrantly generated with, as a set of generating (acyclic) cofibrations,
the (acyclic) cofibrations
of cochain dg-cooperads $i: \COp\rightarrow\DOp$
such that the components $\COp(r)$ and $\DOp(r)$ of our cooperads $\COp$ and $\DOp$ form a cochain graded dg-module of countable dimension
in arity $r=1$, form a bounded cochain graded dg-module of finite dimension over the ground field in arity $r>1$,
and vanish for $r\gg 0$.
\end{thm}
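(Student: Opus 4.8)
The plan is to obtain the model structure by transfer along the adjunction $\overline{\omega}: \dg^*\Op_0^c \rightleftarrows \dg^*\Seq_{>0}^c : \FreeOp^c$ established in the previous paragraphs, using the arity-wise (projective) model structure on $\dg^*\Seq_{>0}^c$ induced by the model structure on $\dg^*\Mod$. Concretely, I would declare a morphism of cochain dg-cooperads to be a weak-equivalence (resp. fibration) exactly when $\overline{\omega}(\phi)$ is one in $\dg^*\Seq_{>0}^c$, which by definition of $\overline{\omega}$ reproduces the arity-wise conditions stated above. To run Kan's transfer principle I must check two things: that the right adjoint $\FreeOp^c$ preserves filtered colimits (so that the small object argument applies), and that every morphism admitting a retraction of some relative $J$-cell complex is a weak-equivalence, where $J$ is the image under $\FreeOp^c$ of the generating acyclic cofibrations of $\dg^*\Seq_{>0}^c$. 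The first point is where the conilpotence hypothesis does the real work: on a fixed arity $r$, the cofree cooperad $\FreeOp^c(\NOp)(r) = \bigoplus_{[\ttree]} \FreeOp_{\ttree}^c(\NOp)$ is a direct sum indexed by isomorphism classes of $r$-trees, and each summand $\FreeOp_{\ttree}^c(\NOp) = \bigotimes_{v\in V(\ttree)} \NOp(r_v)$ is a finite tensor product; since finite tensor products and arbitrary direct sums commute with filtered colimits in $\dg^*\Mod$, the functor $\FreeOp^c$ does too.

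The second point — acyclicity of pushouts of generating acyclic cofibrations — is the step I expect to be the main obstacle, and it is handled by an arity-wise filtration argument. A generating acyclic cofibration in $\dg^*\Seq_{>0}^c$ is concentrated in a single arity $s$ and is there an acyclic cofibration $j: K \rightarrowtail L$ of cochain graded dg-modules; I need to understand a pushout $\COp \to \COp'$ in $\dg^*\Op_0^c$ of $\FreeOp^c(j)$ along an arbitrary map $\FreeOp^c(K)\to\COp$. The key observation is that $\COp'(r)$ decomposes, as a cochain graded dg-module, into a direct sum indexed by the number of vertices of the "new" tree-part carrying a label in $L/K$; filtering by this count produces a sequence of pushouts in $\dg^*\Mod$ whose successive quotients are built from $\COp$-components tensored with copies of $L/K$ and of $j$, placed at the vertices of trees. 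Because $j$ is an acyclic cofibration and tensoring with cofibrant complexes (and taking coinvariants under the relevant symmetric group actions, which is exact over $\QQ$) preserves acyclic cofibrations, each stage is an acyclic cofibration of cochain graded dg-modules; passing to the colimit over the filtration, and then over the arities $r$, shows $\COp(r)\to\COp'(r)$ is an acyclic cofibration, in particular a quasi-isomorphism. Transfinite composition and retract-closure of weak-equivalences then give the acyclicity of all relative $J$-cell complexes, which is exactly the hypothesis of the transfer theorem.

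With both conditions verified, Kan's lemma yields a cofibrantly generated model structure on $\dg^*\Op_0^c$ with weak-equivalences and fibrations as declared, and with generating cofibrations (resp. acyclic cofibrations) the maps $\FreeOp^c(i)$ for $i$ a generating (acyclic) cofibration of $\dg^*\Seq_{>0}^c$; the cofibrations of $\dg^*\Op_0^c$ are then the retracts of relative cell complexes on these maps. It remains to identify the cofibrations with the arity-wise injections in positive degrees: the nontrivial inclusion is that an arity-wise cofibration is a cofibration of cooperads, which follows by the standard argument that such a map has the left lifting property against all arity-wise acyclic fibrations — using that $\overline{\omega}$ detects acyclic fibrations and that an arity-wise acyclic fibration of dg-modules splits. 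Finally, for the stated refinement of the generating set, I would replace each arity-$s$ generating (acyclic) cofibration of dg-modules by the standard small generators (the maps $S^{n-1}\to D^n$ and $0\to D^n$ built from $\QQ$ in each degree), whose cofree cooperads $\FreeOp^c$ are, arity-wise, finite direct sums of finite tensor products of these bounded finite-dimensional complexes together with $\QQ$'s — hence countable-dimensional in arity $1$ (the only arity in which infinitely many trees contribute, via chains of unary vertices), finite-dimensional and bounded in each arity $r>1$, and zero for $r\gg 0$ since a tree contributing to arity $r$ with a label in arity $s$ forces a bound on the vertex count and hence on $r$; a routine check shows this smaller set still detects acyclic fibrations, so it generates the same model structure.
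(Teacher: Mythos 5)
Your proposal runs the adjunction in the wrong direction, and this is fatal to the transfer strategy. In the adjunction $\overline{\omega}: \dg^*\Op_0^c\rightleftarrows\dg^*\Seq_{>0}^c :\FreeOp^c$, the cofree cooperad functor is the \emph{right} adjoint; the functor that detects the arity-wise classes is the \emph{left} adjoint $\overline{\omega}$. Kan's transfer principle requires the detecting functor to be a right adjoint, with generating cofibrations obtained by applying its left adjoint; here there is no such left adjoint, and the adjunction relation $\Mor(\COp,\FreeOp^c(\NOp))\cong\Mor(\bar{\COp},\NOp)$ controls maps \emph{into} cofree cooperads, not out of them. This is why cofree (and quasi-cofree) cooperads serve to produce fibrations and fibrant replacements --- as in the paper's use of $\DGB\DGB^c(\COp)$ --- and cannot supply generating cofibrations; your pushout-filtration analysis of $\FreeOp^c(j)$-cell attachments is the dual of the free-algebra argument and has no counterpart on this side. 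Moreover, the structure you would obtain (fibrations defined as arity-wise surjections, cofibrations by left lifting) is not the structure asserted in the theorem, which is of injective type: weak-equivalences and cofibrations are arity-wise, and fibrations are only characterized by the right lifting property. Your closing attempt to identify the cofibrations with arity-wise injections by a lifting-and-splitting argument does not work, because the required lifts must be cooperad morphisms, not merely arity-wise module maps.

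The actual proof has a different crux, following the Getzler--Goerss pattern for dg-coalgebras: one must show that every (acyclic) cofibration, i.e.\ arity-wise (acyclic) injection of cooperads, is a filtered colimit of (acyclic) cofibrations $\KOp\cap\COp\hookrightarrow\KOp$ between \emph{small} subcooperads of the target, where smallness means finite-dimensional and bounded in each arity $r>1$, vanishing for $r\gg 0$, and of countable dimension in arity $1$. The arities $r>1$ are handled as in the reduced case, but the new point in this paper is arity one: one adapts the subcoalgebra closure arguments of Getzler--Goerss (their Proposition 1.5, and Lemma 2.5 for the acyclic case) to enlarge the arity-one piece to a sub-dg-coalgebra of $\DOp(1)$ of countable dimension containing the arity-one factors of all composition coproducts of the chosen higher-arity elements. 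Your description of the generating set (cofree cooperads on sphere/disc generators) is not what the statement asserts --- the generators are arbitrary (acyclic) cofibrations between cooperads satisfying the smallness conditions --- and the countability in arity one comes from this coalgebra closure argument, not from chains of unary vertices in trees. As written, the proposal neither establishes the model structure of the statement nor its cofibrant generation by the indicated set.
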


\begin{proof}
We use almost the same plan and the same arguments as in the proof of~\cite[Theorem II.9.2.2]{FresseBook}.
We just need to adapt the proof that the (acyclic) cofibrations are given by filtered colimits of generating (acyclic) cofibrations
of the form considered in our statement.

To be explicit, in order to establish such a result in~\cite[\S II.9.2]{FresseBook},
we mainly prove that, for any injective morphism of cochain graded dg-cooperads $i: \COp\hookrightarrow\DOp$,
and for any collection $\SOp\subset\DOp$ such that $\SOp(r)$ vanishes in arity $r>m$
and forms a bounded cochain graded dg-module of finite dimension over the ground field for $r = 1,\dots,m$,
for some arity bound $m>0$, we can pick a subcooperad $\KOp\subset\DOp$ such that $\SOp\subset\KOp$
and which satisfies the same finiteness properties as $\SOp$.
Then we consider the injective morphism $i\shortmid_{\KOp\cap\COp}: \KOp\cap\COp\hookrightarrow\KOp$
which defines a cofibration of the form considered in our theorem.
If $i: \COp\hookrightarrow\DOp$ is also a weak-equivalence, then we can pick such a subcooperad $\KOp\subset\DOp$
so that the morphism $i\shortmid_{\KOp\cap\COp}: \KOp\cap\COp\hookrightarrow\KOp$
is a weak-equivalence as well (see~\cite[Lemma II.9.2.5]{FresseBook}).
The construction of~\cite[Lemma II.9.2.5]{FresseBook} only produces the components of arity $r>1$ of this subcooperad $\KOp\subset\DOp$.
In order to complete this construction,  we can adapt the arguments of~\cite[Proposition 1.5]{GetzlerGoerss}
to get a dg-coalgebra $\KOp(1)\subset\DOp(1)$, of countable dimension over the ground field,
and which contains $\SOp(1)\subset\DOp(1)$ as well as the factors of arity one
of the composition coproducts of the elements of the modules $\KOp(r)$, $r>1$,
in the cooperad $\DOp$. We then get that the whole collection $\KOp(r)$, $r>0$, forms a subobject of~$\DOp$
in the category of cochain dg-cooperads.
In the case where $i: \COp\hookrightarrow\DOp$ is a weak-equivalence, we use the construction of~\cite[Lemma 2.5]{GetzlerGoerss}
to produce a coalgebra $\KOp(1)\subset\DOp(1)$
with the additional property that the morphism $i\shortmid_{\KOp(1)\cap\COp(1)}: \KOp(1)\cap\COp(1)\hookrightarrow\KOp(1)$
is also a weak-equivalence. Then we still get that our extended cooperad morphism $i\shortmid_{\KOp\cap\COp}: \KOp\cap\COp\hookrightarrow\KOp$
defines a weak-equivalence in every arity $r>0$.
\end{proof}

\subsubsection{The model category of Hopf cochain dg-cooperads}\label{Hopf cooperads:model structure}
Recall that we call Hopf cochain dg-cooperad the structure defined by a cooperad in the category of commutative cochain dg-algebras,
where a commutative cochain dg-algebra consists of a commutative algebra
in the category of cochain graded dg-modules.
Note that in the case of a Hopf cooperad $\AOp$, the section $\epsilon^*: \QQ\rightarrow\AOp(1)$
of the cooperad counit $\eta^*: \AOp(1)\rightarrow\QQ$
is necessarily identified with the unit morphism of the dg-algebra $\AOp(1)$,
and the coaugmentation coideal of our object $\bar{\AOp}$
is obviously identified with the augmentation ideal of this dg-algebra $\AOp(1)$
in arity one.

In what follows, we use the short notation $\dg^*\Hopf\Op_0^c = \dg^*\ComCat\Op_0^c$
for the category of Hopf cochain dg-cooperads.
We also use the notation $\dg^*\Hopf\Seq_{>0}^c = \dg^*\ComCat\Seq_{>0}^c$ for the category of Hopf symmetric sequences,
which consists of the symmetric sequences in the category of commutative cochain dg-algebras.
We observed in~\cite[Proposition II.9.3.4]{FresseBook} that the cofree cooperad functor from the category of Hopf symmetric sequences
to the category of Hopf cooperads defines a lifting of the corresponding cofree cooperad functor from the category of symmetric sequences
to the category of cooperads (with no Hopf structure).
We readily check that this result remains valid in the setting of this paper.

We also have an obvious forgetful functor $\omega: \dg^*\Hopf\Op_0^c\rightarrow\dg^*\Op_0^c$
from the category of Hopf cochain dg-cooperads to the category of cochain dg-cooperads.
We observed in~\cite[\S II.9.3]{FresseBook} that this functor admits a left adjoint,
which is given by a cooperadic realization of the symmetric algebra $\Sym_{\Op_0^c}(-)$.
We easily check that this result remains valid in the setting of this paper too, but we have to adapt the explicit construction in the general setting.
To be explicit, for a cooperad $\COp\in\dg^*\Op_0^c$, we actually have the identity $\Sym_{\Op_0^c}(\COp)(1) = \Sym(\COp(1))\otimes_{\Sym(\QQ)}\QQ$
in arity one, where we form a relative tensor product to identify the morphism $\Sym(\epsilon^*): \Sym(\QQ)\rightarrow\Sym(\COp(1))$
induced by the coaugmentation morphism of our cooperad $\epsilon^*: \QQ\rightarrow\COp(1)$
with the unit morphism of the symmetric algebra $\eta: \QQ\rightarrow\Sym(\COp(1))$.
For $r>1$, we just have $\Sym_{\Op_0^c}(\COp)(r) = \Sym(\COp(r))$.

We use the adjunction $\Sym_{\Op_0^c}(-): \dg^*\Op_0^c\rightleftarrows\dg^*\Hopf\Op_0^c :\omega$
to transport the model structure on the category of cochain dg-cooperads
to the category of Hopf cochain dg-cooperads.
We explicitly assume that a morphism of Hopf cochain dg-cooperads is:
\begin{enumerate}
\item a weak-equivalence $\phi: \AOp\xrightarrow{\sim}\BOp$ if this morphism defines a weak-equivalence
in the category of cochain dg-coperads  (thus, if this morphism induces an isomorphism in cohomology),
\item a fibration $\phi: \AOp\twoheadrightarrow\BOp$ if this morphism defines a fibration in the category of cochain dg-coperads,
\item a cofibration $\phi: \AOp\rightarrowtail\BOp$ if this morphism has the left lifting property with respect to the class of acyclic fibrations
determined by the above definitions.
\end{enumerate}
We then have the following statement:

\begin{thm}\label{thm:Hopf cooperad model category}
The definitions of the previous paragraph provide the category of Hopf cochain dg-cooperads $\dg^*\Hopf\Op_0^c$ with a valid model structure.
Furthermore, this category is cofibrantly generated with the morphisms $\Sym_{\Op_0^c}(i): \Sym_{\Op_0^c}(\COp)\rightarrow\Sym_{\Op_0^c}(\DOp)$
defined by taking the image of the generating (acyclic) cofibrations $i: \COp\rightarrow\DOp$
of the model category of cooperads under the symmetric algebra functor $\Sym_{\Op_0^c}(-): \dg^*\Op_0^c\rightarrow\dg^*\Hopf\Op_0^c$
as a set of generating (acyclic) cofibrations.
\end{thm}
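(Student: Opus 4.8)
The plan is to obtain the model structure of the statement by transferring the model structure of Theorem~\ref{thm:cooperad model category} along the adjunction $\Sym_{\Op_0^c}(-): \dg^*\Op_0^c\rightleftarrows\dg^*\Hopf\Op_0^c :\omega$, following the template used for the corresponding statement in the case $\AOp(1) = \QQ$ in~\cite[\S II.9.3]{FresseBook}. Concretely, I would invoke the standard transfer principle for cofibrantly generated model categories along a left adjoint, which reduces the proof to three verifications: that the category $\dg^*\Hopf\Op_0^c$ is complete and cocomplete; that the images $\Sym_{\Op_0^c}(I)$ and $\Sym_{\Op_0^c}(J)$ of the generating (acyclic) cofibrations of Theorem~\ref{thm:cooperad model category} under the symmetric algebra functor permit the small object argument; and that the forgetful functor $\omega$ carries every relative $\Sym_{\Op_0^c}(J)$-cell complex to a weak-equivalence. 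Granting these, the transfer principle produces a cofibrantly generated model structure on $\dg^*\Hopf\Op_0^c$ whose weak-equivalences and fibrations are the morphisms sent by $\omega$ to weak-equivalences and fibrations of cochain dg-cooperads, and whose generating (acyclic) cofibrations are the morphisms $\Sym_{\Op_0^c}(i)$ — which is exactly the structure asserted in the statement.

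For the completeness and the small object argument I would argue as in~\cite[\S II.9.3]{FresseBook}. Limits in $\dg^*\Hopf\Op_0^c$ are created by $\omega$; cocompleteness follows from the cocompleteness of $\dg^*\Op_0^c$ (which underlies Theorem~\ref{thm:cooperad model category}) together with the fact that the composite $\omega\circ\Sym_{\Op_0^c}(-)$ preserves filtered colimits. The domains of the morphisms in $\Sym_{\Op_0^c}(I)$ and $\Sym_{\Op_0^c}(J)$ are the objects $\Sym_{\Op_0^c}(\COp)$ attached to the ``small'' cooperads $\COp$ of Theorem~\ref{thm:cooperad model category}; since those $\COp$ are small in $\dg^*\Op_0^c$ and $\omega$ preserves filtered colimits, the adjunction isomorphism $\Hom_{\dg^*\Hopf\Op_0^c}(\Sym_{\Op_0^c}(\COp),-)\cong\Hom_{\dg^*\Op_0^c}(\COp,\omega(-))$ shows that the objects $\Sym_{\Op_0^c}(\COp)$ are small in $\dg^*\Hopf\Op_0^c$, so the small object argument applies. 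The only feature that departs from~\cite[\S II.9.3]{FresseBook} is the arity-one component $\Sym_{\Op_0^c}(\COp)(1) = \Sym(\COp(1))\otimes_{\Sym(\QQ)}\QQ$, but this is still a commutative cochain dg-algebra of countable dimension over $\QQ$ whenever $\COp(1)$ is, so the reduction goes through unchanged.

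The real work — and the step I expect to be the main obstacle — is the acyclicity condition on $\Sym_{\Op_0^c}(J)$-cell complexes. As in~\cite[\S II.9.3]{FresseBook}, I would reduce it to showing that the pushout $\BOp\rightarrow\BOp'$ of a morphism $\Sym_{\Op_0^c}(j): \Sym_{\Op_0^c}(\COp)\rightarrow\Sym_{\Op_0^c}(\DOp)$ along any morphism $\Sym_{\Op_0^c}(\COp)\rightarrow\BOp$ is a weak-equivalence, where $j: \COp\rightarrowtail\DOp$ runs over the generating acyclic cofibrations of cochain dg-cooperads; the case of general relative cell complexes then follows by transfinite composition, using that cohomology commutes with the filtered colimits at stake over $\QQ$. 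By Theorem~\ref{thm:cooperad model category} one can take $j$ in the explicit cofree form built from acyclic ``disk'' dg-modules, and, analysing this pushout as in~\cite[\S II.9.3]{FresseBook}, one expresses $\BOp(r)\rightarrow\BOp'(r)$, up to quasi-isomorphism, as the result of tensoring $\BOp(r)$ with symmetric algebras $\Sym(V)$ on acyclic cochain complexes $V$. At this point the characteristic-zero hypothesis on $\QQ$ enters decisively: each symmetric power $\Sym^n(V)$ is a retract of the acyclic complex $V^{\otimes n}$ through the symmetrization idempotent, hence is acyclic for $n\geq 1$, so $\Sym(V)$ is quasi-isomorphic to $\QQ$, and tensoring a commutative cochain dg-algebra with this quasi-isomorphism is again a quasi-isomorphism by the Künneth formula.

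The one genuinely new verification — and where I would spend most of the care — is the arity-one case, where the relative tensor product $\otimes_{\Sym(\QQ)}\QQ$ occurs in the formula for $\Sym_{\Op_0^c}(-)(1)$: one has to check that the extension still reduces, up to the quasi-isomorphisms just described, to tensoring $\BOp(1)$ with a symmetric algebra on an acyclic complex modulo the polynomial subalgebra generated by the coaugmentation, and that this operation does not affect acyclicity. Once this is done, $\omega$ sends every relative $\Sym_{\Op_0^c}(J)$-cell complex to a quasi-isomorphism, the transfer principle applies, and the theorem follows; away from arity one the argument is identical to the one in~\cite[\S II.9.3]{FresseBook}, so I do not anticipate any serious difficulty beyond this arity-one bookkeeping.
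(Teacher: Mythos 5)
Your proposal is correct and follows essentially the same route as the paper, whose proof simply invokes the verifications of the analogous statement in \cite[Theorem II.9.3.9]{FresseBook}, i.e.\ the transfer of the model structure of Theorem~\ref{thm:cooperad model category} along the adjunction $\Sym_{\Op_0^c}(-):\dg^*\Op_0^c\rightleftarrows\dg^*\Hopf\Op_0^c:\omega$ with the characteristic-zero acyclicity argument for symmetric algebras. Your extra attention to the arity-one component $\Sym_{\Op_0^c}(\COp)(1)=\Sym(\COp(1))\otimes_{\Sym(\QQ)}\QQ$ is exactly the only adjustment the present setting requires, as the paper itself indicates in \S\ref{Hopf cooperads:model structure}.
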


\begin{proof}
We use exactly the same verifications as in the proof of the analogous statement in~\cite[Theorem II.9.3.9]{FresseBook}.
\end{proof}

We also have the following additional result which follows from the description of the generating (acyclic) cofibrations of Hopf cochain dg-cooperads
given in this theorem:

\begin{prop}\label{prop:Hopf cooperad cofibrations}
The cofibrations of Hopf cochain dg-cooperads $\phi: \AOp\rightarrowtail\BOp$
define a cofibration of commutative cochain dg-algebras $\phi: \AOp(r)\rightarrowtail\BOp(r)$
in each arity $r>0$ (and this cofibration is also acyclic if $\phi$ is so).
\end{prop}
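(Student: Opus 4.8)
The plan is to exploit the fact, recorded in Theorem~\ref{thm:Hopf cooperad model category}, that the model category of Hopf cochain dg-cooperads is cofibrantly generated with generating (acyclic) cofibrations the morphisms $\Sym_{\Op_0^c}(i)\colon \Sym_{\Op_0^c}(\COp)\rightarrow\Sym_{\Op_0^c}(\DOp)$ obtained by applying the symmetric algebra functor to the generating (acyclic) cofibrations $i\colon \COp\rightarrowtail\DOp$ of the model category of cochain dg-cooperads $\dg^*\Op_0^c$. Every cofibration of Hopf cochain dg-cooperads $\phi\colon \AOp\rightarrowtail\BOp$ is then a retract of a transfinite composite of pushouts of coproducts of such morphisms $\Sym_{\Op_0^c}(i)$. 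I would fix an arity $r>0$ and apply the evaluation functor $\AOp\mapsto\AOp(r)$, from $\dg^*\Hopf\Op_0^c$ to the category of commutative cochain dg-algebras $\dg^*\ComCat$, to this cell decomposition of $\phi$. For this to yield the claim, two facts have to be checked: that this evaluation functor carries each generating cofibration $\Sym_{\Op_0^c}(i)$ to an (acyclic) cofibration of commutative cochain dg-algebras, and that it commutes with the colimits (coproducts, the relevant pushouts, transfinite composites, retracts) occurring in the cell decomposition of $\phi$.

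For the first point, I would use the explicit description of $\Sym_{\Op_0^c}(-)$ given in~\S\ref{Hopf cooperads:model structure}. We have $\Sym_{\Op_0^c}(\COp)(r) = \Sym(\COp(r)) = \Sym(\bar{\COp}(r))$ for $r>1$, while in arity one the relative tensor product identity $\Sym_{\Op_0^c}(\COp)(1) = \Sym(\COp(1))\otimes_{\Sym(\QQ)}\QQ$ amounts, after imposing the relation $\epsilon^*(1) = 1$ and using the splitting $\COp(1) = \QQ\oplus\bar{\COp}(1)$ supplied by the coaugmentation, to the identity $\Sym_{\Op_0^c}(\COp)(1) = \Sym(\bar{\COp}(1))$. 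These identifications are natural, so $\Sym_{\Op_0^c}(i)(r)$ gets identified with $\Sym(\bar{i}(r))$, where $\bar{i}(r)\colon \bar{\COp}(r)\rightarrow\bar{\DOp}(r)$ is the restriction of $i(r)$ to coaugmentation coideals and $\Sym(-)$ now denotes the free commutative cochain dg-algebra functor $\Sym\colon \dg^*\Mod\rightarrow\dg^*\ComCat$. Since $i$ is a cofibration of cochain dg-cooperads, the morphism $i(r)$ is a cofibration of cochain graded dg-modules in each arity by definition, and the restriction $\bar{i}(r)$ inherits this property, because the coaugmentation coideal is a direct summand of the underlying module which moreover agrees with it in positive degrees (it differs from it only in degree zero, and only in arity one); likewise $\bar{i}(r)$ is acyclic when $i$ is. The functor $\Sym\colon \dg^*\Mod\rightarrow\dg^*\ComCat$ is left Quillen, its right adjoint, the forgetful functor, preserving fibrations and acyclic fibrations, so it sends (acyclic) cofibrations of cochain graded dg-modules to (acyclic) cofibrations of commutative cochain dg-algebras, and we conclude that $\Sym_{\Op_0^c}(i)(r)$ is an (acyclic) cofibration of commutative cochain dg-algebras.

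For the second point, the crucial input is that the evaluation functor $\AOp\mapsto\AOp(r)$ preserves colimits. One way to see this is to write it as the composite of the coaugmentation coideal functor $\overline{\omega}\colon \dg^*\Hopf\Op_0^c\rightarrow\dg^*\Hopf\Seq_{>0}^c$, followed by the arity-$r$ projection out of the category of Hopf symmetric sequences, followed by the reconstruction $\AOp(1) = \QQ\oplus\bar{\AOp}(1)$ of the dg-algebra in arity one from its augmentation ideal: the coaugmentation coideal functor is a left adjoint (it is the left adjoint of the cofree cooperad functor on Hopf symmetric sequences, the Hopf analogue, recalled in~\S\ref{Hopf cooperads:model structure}, of the adjunction $\overline{\omega}\dashv\FreeOp^c$ of~\S\ref{cooperads:general definition}), and the remaining functors preserve all colimits. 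Equivalently, colimits in $\dg^*\Hopf\Op_0^c$ are computed arity-wise, with coproducts corresponding to tensor products of commutative cochain dg-algebras, pushouts to relative tensor products, and filtered colimits to filtered colimits. I expect the main obstacle to be precisely the verification underlying this last claim, namely that the arity-wise tensor and relative tensor constructions satisfy the conilpotence condition of~\S\ref{cooperads:general definition}, so that they genuinely define Hopf cochain dg-cooperads: this holds because the largest tree on which the reduced treewise coproducts of a product (or tensor) of elements fail to vanish has a number of vertices bounded by the sum of the bounds attached to the factors, but making this precise requires careful bookkeeping with the contributions of the arity-one components. Granting this, applying $\AOp\mapsto\AOp(r)$ to the cell decomposition of $\phi$ exhibits $\phi(r)\colon \AOp(r)\rightarrow\BOp(r)$ as a retract of a transfinite composite of pushouts of coproducts of morphisms of the form $\Sym(\bar{i}(r))$; since these are cofibrations of commutative cochain dg-algebras and this class is stable under the operations in question, $\phi(r)$ is a cofibration of commutative cochain dg-algebras. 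Finally, if $\phi$ is in addition a weak-equivalence, then, by the definition of weak-equivalences in $\dg^*\Hopf\Op_0^c$, it is a quasi-isomorphism in each arity, so $\phi(r)$ is then an acyclic cofibration of commutative cochain dg-algebras, which completes the proof.
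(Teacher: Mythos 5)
Your proposal is correct and follows essentially the same route as the paper's own proof: it observes that the generating (acyclic) cofibrations $\Sym_{\Op_0^c}(i)$ are arity-wise (acyclic) cofibrations of commutative cochain dg-algebras (the paper asserts this directly, while you make it explicit via the identification $\Sym_{\Op_0^c}(i)(r)\cong\Sym(\bar{i}(r))$ and the left Quillen property of $\Sym$), and then uses that colimits of Hopf cochain dg-cooperads are created arity-wise in commutative cochain dg-algebras to pass to retracts of relative cell complexes. Your additional remarks (the conilpotence of arity-wise tensor constructions, the justification that evaluation preserves colimits) only elaborate points the paper takes for granted.
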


\begin{proof}
We immediately see that the generating (acyclic) cofibrations of our category of Hopf cochain dg-cooperads
$\Sym_{\Op_0^c}(i): \Sym_{\Op_0^c}(\COp)\rightarrow\Sym_{\Op_0^c}(\DOp)$
define an (acyclic) cofibration of commutative cochain dg-algebra
in each arity. Then we just use that the colimits of Hopf cochain dg-cooperads are created arity-wise
in the underlying category of commutative cochain dg-algebras
to deduce from this result that every relative cell complex of generating (acyclic) cofibrations of Hopf cochain dg-cooperads,
and hence, every (acyclic) cofibration,
defines an (acyclic) cofibration in the category of commutative cochain dg-algebras,
which is the claim of our proposition.
\end{proof}

In our verifications, we also use an explicit description, in terms of conormalized cochain complexes,
of the totalization of cosimplicial objects in the category of cochain dg-cooperads.
This description follows from a straightforward generalization of a result given in \cite[\S II.9.4]{FresseBook}
in the category of (Hopf) cochain dg-cooperads which reduce
to the ground field in arity one.

To be more explicit, to carry out our construction, we use the operadic cobar-bar adjunction between the category of dg-cooperads
and the category of dg-operads. We use the notation $\DGB^c(\COp)$ for the bar construction of a dg-cooperad $\COp$
and the notation $\DGB(\POp)$ for the bar construction of a dg-operad $\POp$.
We refer to~\cite[\S C.2]{FresseBook} for a detailed survey on this operadic application of the bar duality (see also \cite{GetzlerJones}
for the original reference on the formalism which we use in this paper).
We just allow arbitrary terms in arity one, whereas we assume that the components of our operads and cooperads
are reduced to the ground ring in~\cite{FresseBook} (as usual), but the extension of the constructions
to our setting is straightforward. Let us simply mention that the bar construction is only defined for dg-operads $\POp$
which are equipped with an augmentation over the operad $\IOp$
such that $\IOp(1) = \QQ$ and $\IOp(r) = 0$ for $r>0$. (This structure is dual
to the coaugmentation which we consider in the case of cooperads.)
For such an operad $\POp$, we use the notation $\bar{\POp}(1)$ for the kernel of the augmentation morphism $\epsilon: \POp(1)\rightarrow\QQ$.
The cobar construction of a cooperad $\POp = \DGB^c(\COp)$
is naturally equipped with such an augmentation. Moreover, when $\COp$ is a cooperad in the category of cochain graded dg-modules,
then the image of $\COp$ under the cobar-bar construction is still a a cooperad in cochain graded dg-modules $\DGB\DGB^c(\COp)\in\dg^*\Op_0^c$,
and the unit morphism of the cobar-bar adjunction defines a weak-equivalence $\COp\xrightarrow{\sim}\DGB\DGB^c(\COp)$
in this category $\dg^*\Op_0^c$. The cobar-bar construction $\DGB\DGB^c(\COp)$ also forms a fibrant object
in $\dg^*\Op_0^c$.
The proof of this claim follows from an easy generalization of the arguments given in~\cite[\S II.9.4.1]{FresseBook}
in the context of cooperads which reduce to the unit object in arity one. (We just need
to consider a filtration by the grading of the bar construction $\DGB(\COp)$,
rather than the filtration by the arity in the proof of the general proposition~\cite[Proposition II.9.2.10]{FresseBook}
which we use to establish our result in this reference.)

Let now $\POp = \DGB^c(\COp)$. We consider the simplicial dg-operad $\POp\bar{\otimes}\DGOmega^*(\Delta^{\bullet})$
such that $(\POp\bar{\otimes}\DGOmega^*(\Delta^{\bullet}))(1) = \QQ\oplus\bar{\POp}(1)\otimes\DGOmega^*(\Delta^{\bullet})$
and $(\POp\bar{\otimes}\DGOmega^*(\Delta^{\bullet}))(r) = \POp(r)\otimes\DGOmega^*(\Delta^{\bullet})$
for $r>0$,
where $\DGOmega^*(\Delta^{\bullet})$ denotes the Sullivan commutative cochain dg-algebra
of the collection of the simplices $\Delta^{\bullet}$.
Then one can check that $\DGB(\POp)^{\Delta^{\bullet}} = \DGB(\POp\bar{\otimes}\DGOmega^*(\Delta^{\bullet})$
defines a simplicial framing of the object $\DGB(\POp)$
in the category of cochain dg-cooperads (adapt the arguments given in~\cite[\S II.9.4]{FresseBook} again).
We precisely use this simplicial framing construction in order to get an explicit realization of the totalization of cosimplicial objects
in the category of cochain dg-cooperads. Then we get the following statement:

\begin{thm}\label{thm:cooperad totalization}
Let $\COp^{\bullet}\in\simp\dg^*\Op_0^c$ be a cosimplicial object of the category of cochain dg-cooperads.
Let $\POp^{\bullet} = \DGB^c(\COp^{\bullet})$ denote the image of this cosimplicial cochain dg-cooperad
under the operadic cobar construction $\DGB^c(-)$.
\begin{enumerate}
\item
The object $\DGB(\POp^{\bullet})^{\Delta^{\bullet}} = \DGB(\POp^{\bullet}\bar{\otimes}\DGOmega^*(\Delta^{\bullet})$,
where $\DGB(-)$ denotes the operadic bar construction,
defines a simplicial framing of the object $\KOp^{\bullet} = \DGB(\POp^{\bullet})$
in the category of cosimplicial cochain dg-cooperads,
so that we have the identity:
\begin{equation*}
\Tot(\DGB(\POp^{\bullet})) = \int_{\underline{n}\in\Delta}\DGB(\POp^n\bar{\otimes}\DGOmega^*(\Delta^n))
 = \DGB(\int_{\underline{n}\in\Delta}\POp^n\bar{\otimes}\DGOmega^*(\Delta^n))
\end{equation*}
in $\dg^*\Op_0^c$, where $\Tot(\KOp^{\bullet})$ denotes the totalization of the cosimplicial object $\KOp^{\bullet} = \DGB(\POp^{\bullet})$
in the sense of the theory of model categories.
\item
For such an object $\KOp^{\bullet} = \DGB\DGB^c(\COp^{\bullet})$, we have a chain of natural weak-equivalences of cochain dg-cooperads
\begin{equation*}
\Tot(\DGB\DGB^c(\COp^{\bullet})) = \DGB(\int_{\underline{n}\in\Delta}\POp^n\bar{\otimes}\DGOmega^*(\Delta^n))
\xrightarrow{\sim}\DGB\DGB^c(\DGN^*(\COp^{\bullet}))\xleftarrow{\sim}\DGN^*(\COp^{\bullet}),
\end{equation*}
where $\DGN^*(\COp^{\bullet})$ denotes the obvious arity-wise extension, to the category of cochain dg-cooperads,
of the conormalization complex functor on the category of cosimplicial cochain graded dg-modules $\DGN^*(-)$
(see~\cite[\S II.9.4.5]{FresseBook}).
For a constant cosimplicial object $\COp^{\bullet} = \COp$,
we have $\Tot(\DGB\DGB^c(\COp^{\bullet})) = \DGB\DGB^c(\COp)$, $\DGN^*(\COp^{\bullet}) = \COp$,
and the first weak-equivalence of this chain reduces to the identity morphism.
\end{enumerate}
\end{thm}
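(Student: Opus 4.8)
The plan is to reduce the statement to the known theory of simplicial framings in model categories, combined with the explicit cobar-bar machinery set up in the paragraph preceding the theorem. The key observation is that everything is controlled arity-wise by the corresponding result for cochain dg-modules, so the main task is to check that the cobar-bar construction is compatible with the relevant (co)limits.

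\emph{Step 1: the simplicial framing.} First I would verify that $\DGB(\POp^{\bullet})^{\Delta^{\bullet}} = \DGB(\POp^{\bullet}\bar{\otimes}\DGOmega^*(\Delta^{\bullet}))$ is a simplicial framing of $\KOp^{\bullet} = \DGB(\POp^{\bullet})$. Since a simplicial framing is a cosimplicial-simplicial object satisfying conditions that are checked levelwise in the cosimplicial direction, this reduces to the claim — recalled in the excerpt — that for a single cobar construction $\POp = \DGB^c(\COp)$ the object $\DGB(\POp\bar{\otimes}\DGOmega^*(\Delta^{\bullet}))$ is a simplicial framing of $\DGB(\POp)$. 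That in turn follows by adapting the arguments of~\cite[\S II.9.4]{FresseBook}: the crux is that $n\mapsto\DGOmega^*(\Delta^n)$ is a simplicial resolution of $\QQ$ in $\dg^*\ComCat$, that $\POp\mapsto\POp\bar{\otimes}\DGOmega^*(\Delta^{\bullet})$ preserves the relevant weak-equivalences and produces a Reedy-fibrant object arity-wise (using Proposition~\ref{prop:Hopf cooperad cofibrations} dually and the description of fibrations in $\dg^*\Op_0^c$), and that $\DGB(-)$ carries this to a simplicial framing of $\DGB(\POp)$ because $\DGB(-)$ sends arity-wise weak-equivalences of augmented dg-operads with cofibrant-enough underlying modules to weak-equivalences of dg-cooperads and preserves the simplicial structure on the nose.

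\emph{Step 2: the totalization formula.} Given the simplicial framing, the totalization $\Tot(\KOp^{\bullet})$ in the model-categorical sense is computed as the end $\int_{\underline n\in\Delta}(\KOp^n)^{\Delta^n} = \int_{\underline n\in\Delta}\DGB(\POp^n\bar{\otimes}\DGOmega^*(\Delta^n))$. The remaining identity $\int_{\underline n}\DGB(\POp^n\bar{\otimes}\DGOmega^*(\Delta^n)) = \DGB(\int_{\underline n}\POp^n\bar{\otimes}\DGOmega^*(\Delta^n))$ is the assertion that the operadic bar construction commutes with this particular end. Here I would argue that the bar construction $\DGB(-)$, being built levelwise from a cofree-cooperad functor applied to a suspension of the augmentation ideal, is a right adjoint at the level of the underlying symmetric sequences — or, more directly, that the end in question is a finite totalization-type limit which $\DGB(-)$ commutes with because $\DGB(\POp)(r)$ is, in each arity and each internal degree, a \emph{finite} direct sum of tensor powers of the components $\bar{\POp}(r_v)$ indexed by trees with boundedly many vertices (the conilpotence/arity bound built into the bar construction), so the end may be pulled inside the direct sum and the tensor factors, each of which is $\int_{\underline n}\bar{\POp}^{\,n}\bar\otimes\DGOmega^*(\Delta^n)(r_v)$ by Fubini for ends and the fact that $\bar\otimes$ with the fixed object $\DGOmega^*(\Delta^n)$ commutes with the relevant limits arity-wise.

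\emph{Step 3: the comparison chain and the constant case.} For the second assertion, with $\COp^{\bullet} = \DGB^c$ applied levelwise and then $\DGB$, I would use the standard comparison between the totalization (via simplicial framing) and the conormalization: the canonical map $\Tot(\KOp^{\bullet})\to\DGB\DGB^c(\DGN^*(\COp^{\bullet}))$ comes from comparing the framing $\DGOmega^*(\Delta^{\bullet})$-model with the Dold-Kan conormalization, exactly as in~\cite[\S II.9.4.5]{FresseBook}, and is an arity-wise weak-equivalence because it is so on underlying cochain dg-modules (this is the classical statement that $\Tot$ of a Reedy-fibrant cosimplicial object agrees with $\DGN^*$ up to quasi-isomorphism, applied in each arity), while the weak-equivalence $\DGN^*(\COp^{\bullet})\xrightarrow{\sim}\DGB\DGB^c(\DGN^*(\COp^{\bullet}))$ is the cobar-bar unit, already recalled to be a weak-equivalence of fibrant objects in $\dg^*\Op_0^c$. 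For a constant cosimplicial object the framing end collapses: $\int_{\underline n}\POp\bar\otimes\DGOmega^*(\Delta^n) = \POp\bar\otimes\int_{\underline n}\DGOmega^*(\Delta^n) = \POp\bar\otimes\QQ = \POp$ since $\int_{\underline n}\DGOmega^*(\Delta^n) = \DGOmega^*(\mathrm{pt}) = \QQ$, so $\Tot(\DGB\DGB^c(\COp)) = \DGB\DGB^c(\COp)$ and the first map is the identity; likewise $\DGN^*(\COp) = \COp$.

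The main obstacle I expect is Step 2 — justifying that $\DGB(-)$ commutes with the framing end. Since $\DGB(-)$ involves an \emph{infinite} direct sum over trees in general, the interchange of limits with this colimit is not formal; the argument must genuinely use the bounded-degree/conilpotent structure of the cobar construction $\POp = \DGB^c(\COp)$ (so that, in each fixed total degree, only finitely many tree summands and boundedly many tensor factors contribute) to reduce the end to a finite limit of tensor products, where the interchange is legitimate. This is precisely the point where the parenthetical remark in the excerpt — that one filters by the grading of the bar construction rather than by arity — does the work, and I would spell this filtration argument out carefully, reducing to each graded layer where the relevant (co)limits are finite and hence commute.
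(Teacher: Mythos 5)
Your overall plan---reduce everything to an adaptation of the arguments of \cite[\S II.9.4]{FresseBook}, with the framing statement checked as recalled in the paragraph before the theorem, the end formula for the totalization, the comparison with the conormalization via the cobar-bar unit, and the collapse in the constant case---is exactly the route the paper takes (its proof is a direct reference to that section of the book), and your Steps 1 and 3 are essentially fine (the appeal to Proposition~\ref{prop:Hopf cooperad cofibrations} in Step 1 is off-target, but inessential).

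The genuine gap is in Step 2, in the justification of the identity $\int_{\underline{n}}\DGB(\POp^n\bar{\otimes}\DGOmega^*(\Delta^n)) = \DGB(\int_{\underline{n}}\POp^n\bar{\otimes}\DGOmega^*(\Delta^n))$. Your fallback argument---that $\DGB(\POp)(r)$ is, in each arity and each cohomological degree, a \emph{finite} sum of treewise tensor products, so that the end can be pulled through the tree expansion---is precisely what breaks down in the setting of this paper: once nontrivial arity-one components are allowed there are infinitely many trees with a fixed number of leaves (arbitrarily long chains of arity-one vertices), and arity-one vertices can contribute in degree zero after the bar (de)suspension, so infinitely many tree summands may occur in a fixed arity \emph{and} a fixed degree. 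A filtration by the bar weight does not repair this, because what you need is an exact interchange of a limit (the end) with an infinite direct sum; moreover, the parenthetical remark in the preamble about filtering by the grading of the bar construction is used for a different purpose (fibrancy of $\DGB\DGB^c(\COp)$ and the Reedy/framing verifications), not for a limit-colimit interchange, and a filtration argument could at best give a quasi-isomorphism, whereas the theorem asserts an equality. The justification that makes the identity exact---and the reason the statement is formulated only for cosimplicial objects of the form $\KOp^{\bullet} = \DGB(\POp^{\bullet})$---is that the operadic cobar construction $\DGB^c(-)$ is \emph{left} adjoint to the bar construction $\DGB(-)$, so that $\DGB(-)$ preserves all limits, in particular the end $\int_{\underline{n}\in\Delta}$, on the nose; since limits of augmented dg-operads are created on the underlying symmetric sequences, the end $\int_{\underline{n}}\POp^n\bar{\otimes}\DGOmega^*(\Delta^n)$ is computed termwise and its image under $\DGB$ is the end of the $\DGB$'s in $\dg^*\Op_0^c$. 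Your passing alternative (``right adjoint at the level of the underlying symmetric sequences'') gestures at the cofree-cooperad adjunction, but that ignores the twisted differential; the clean statement is the adjunction for $\DGB$ itself. With Step 2 replaced by this adjunction argument, your proof agrees with the paper's intended one.
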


\begin{proof}
We use a straightforward generalization of the constructions and arguments given in the proof of the analogue of these statements
in~\cite[\S II.9.4]{FresseBook}.
\end{proof}

We also have the following statement:

\begin{prop}\label{prop:Hopf cooperad totalization}
The forgetful functor $\omega: \dg^*\Hopf\Op_0^c\rightarrow\dg^*\Op_0^c$
preserves the totalization of cosimplicial objects.
\end{prop}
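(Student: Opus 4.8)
The plan is to deduce the statement from two features of the forgetful functor $\omega: \dg^*\Hopf\Op_0^c\rightarrow\dg^*\Op_0^c$, namely that it creates weak-equivalences and fibrations and that it preserves all small limits. By the definition of the model structure on $\dg^*\Hopf\Op_0^c$ in~\S\ref{Hopf cooperads:model structure}, this structure is transferred from $\dg^*\Op_0^c$ along the adjunction $\Sym_{\Op_0^c}(-): \dg^*\Op_0^c\rightleftarrows\dg^*\Hopf\Op_0^c :\omega$, so that a morphism of Hopf cochain dg-cooperads is a weak-equivalence (respectively, a fibration) precisely when its image under $\omega$ is so. Hence $\omega$ preserves all weak-equivalences and all fibrations; and, being a right adjoint, $\omega$ preserves all small limits, hence ends as well as the matching objects $M_n(-)$ of cosimplicial objects. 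Combining these facts, $\omega$ takes Reedy fibrations of cosimplicial Hopf cochain dg-cooperads to Reedy fibrations of cosimplicial cochain dg-cooperads, and in particular Reedy fibrant cosimplicial Hopf cochain dg-cooperads to Reedy fibrant cosimplicial cochain dg-cooperads.

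I would then use the description of the totalization used in Theorem~\ref{thm:cooperad totalization}: the totalization of a cosimplicial object of a model category is the end over $\Delta$ of a simplicial framing of a (Reedy) fibrant resolution of this object. For a cosimplicial Hopf cochain dg-cooperad $\AOp^{\bullet}$, pick such a fibrant resolution $\AOp^{\bullet}\xrightarrow{\sim}\hat{\AOp}^{\bullet}$ and a simplicial framing $(\hat{\AOp}^{\bullet})^{\Delta^{\bullet}}$ in the category $\dg^*\Hopf\Op_0^c$ --- these exist by Theorem~\ref{thm:Hopf cooperad model category}, the framing being produced by the straightforward generalization to Hopf cochain dg-cooperads of the construction recalled before Theorem~\ref{thm:cooperad totalization}, as in~\cite[\S II.9.4]{FresseBook}. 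I claim that $\omega(\hat{\AOp}^{\bullet})$, equipped with the simplicial framing $\omega((\hat{\AOp}^{\bullet})^{\Delta^{\bullet}})$, is a fibrant resolution with simplicial framing of the underlying cosimplicial cochain dg-cooperad $\omega(\AOp^{\bullet})$: indeed $\omega(\AOp^{\bullet})\to\omega(\hat{\AOp}^{\bullet})$ is a weak-equivalence in each cosimplicial degree and $\omega(\hat{\AOp}^{\bullet})$ is (Reedy) fibrant by the previous paragraph, while the conditions defining a simplicial framing --- agreeing with the given object in simplicial degree zero, having weak-equivalent codegeneracies onto the constant simplicial object, and being Reedy fibrant in the simplicial direction --- are all preserved by $\omega$, since $\omega$ preserves weak-equivalences, fibrations and (matching) limits. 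Consequently the end over $\Delta$ of $\omega((\hat{\AOp}^{\bullet})^{\Delta^{\bullet}})$ computes the totalization of $\omega(\AOp^{\bullet})$ in $\dg^*\Op_0^c$, in the sense of Theorem~\ref{thm:cooperad totalization}.

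Finally, since an end is a small limit and $\omega$ is a right adjoint, $\omega$ commutes with the defining end, and we obtain
\[
\omega\bigl(\Tot(\AOp^{\bullet})\bigr)=\omega\Bigl(\int_{\underline{n}\in\Delta}(\hat{\AOp}^{n})^{\Delta^{n}}\Bigr)=\int_{\underline{n}\in\Delta}\omega\bigl((\hat{\AOp}^{n})^{\Delta^{n}}\bigr)=\Tot\bigl(\omega(\AOp^{\bullet})\bigr),
\]
the last identity being the previous step; the canonical comparison morphism $\omega(\Tot(\AOp^{\bullet}))\rightarrow\Tot(\omega(\AOp^{\bullet}))$ is thus an isomorphism, which is the claim of the proposition. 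The one step that is not pure formalism is the verification that $\omega$ transforms a simplicial framing of a cosimplicial Hopf cochain dg-cooperad into a simplicial framing of the underlying cosimplicial cochain dg-cooperad, where the only delicate point is the prescription in arity one, the coaugmented tensor products built from the augmentation ideals of the algebras $\AOp^{\bullet}(1)$ being plainly compatible with the passage to underlying cochain dg-cooperads; everything else reduces to $\omega$ being a limit-preserving functor that creates weak-equivalences and fibrations.
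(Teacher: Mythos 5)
Your proposal is correct and takes essentially the same route as the paper, whose proof is precisely the observation that $\omega$ creates limits and that the weak-equivalences and fibrations of $\dg^*\Hopf\Op_0^c$ are defined through $\omega$; you have merely spelled out the Reedy-fibrancy and framing bookkeeping that this one-line argument compresses. One small caveat: the bar-cobar framing recalled before Theorem~\ref{thm:cooperad totalization} does not itself carry a Hopf structure, so for the simplicial framing in $\dg^*\Hopf\Op_0^c$ you should invoke the general existence of framings in any model category (available here by Theorem~\ref{thm:Hopf cooperad model category}) rather than a generalization of that specific construction --- this does not affect the rest of your argument.
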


\begin{proof}
This proposition follows from the observation that this forgetful functor creates limits
and from the definition of the class of weak-equivalences
and fibrations
in the model category of Hopf cochain dg-cooperads (see the proof of the analogous statement
in~\cite[Proposition II.9.4.13]{FresseBook}).
\end{proof}

\section{The operadic upgrading of the Sullivan model functor}\label{sec:operadic Sullivan model}
We now explain the definition of our adjunction $\DGG: \dg^*\Hopf\Op_0^c\rightleftarrows\Simp\Op_{\varnothing}^{op} :\DGOmega^*_{\sharp}$
between the category of Hopf cochain dg-cooperads $\dg^*\Hopf\Op_0^c$ and the category of operads
in simplicial sets $\Simp\Op_{\varnothing}$. To be more precise, we explain the definition of these functors in the next paragraph
and we prove afterwards that we have a weak-equivalence $\DGOmega^*_{\sharp}(\POp)(r)\sim\DGOmega^*(\POp(r))$
in each arity $r>0$ (under mild connectedness and finiteness requirements)
when we take the image of a cofibrant operad $\POp$
under our operadic upgrading of the Sullivan functor $\DGOmega^*_{\sharp}(-)$.

\subsubsection{The operadic enhancement of the Sullivan cochain dg-algebra functor}\label{operadic Sullivan model:construction}
Recall that the functor $\DGG: \dg^*\ComCat\rightarrow\Simp^{op}$ is defined by $\DGG(A) = \Mor_{\dg^*\ComCat}(A,\DGOmega^*(\Delta^{\bullet}))$
for any commutative cochain dg-algebra $A\in\dg^*\ComCat$.
We have an obvious isomorphism $\DGG(A\otimes B)\simeq\DGG(A)\otimes\DGG(B)$, for all $A,B\in\dg^*\ComCat$.
This result implies that the collection $\DGG(\AOp)(r) = \Mor_{\dg^*\ComCat}(\AOp(r),\DGOmega^*(\Delta^{\bullet}))$
associated to a Hopf cochain dg-cooperad $\AOp\in\dg^*\Hopf\Op_0^c$
inherits a natural operad structure so that the mapping $\DGG: \AOp\mapsto\DGG(\AOp)$
defines a functor $\DGG: \dg^*\Hopf\Op_0^c\rightarrow\Simp\Op_{\varnothing}^{op}$
from the category of Hopf cochain dg-cooperads $\dg^*\Hopf\Op_0^c$
to the category of operads in simplicial sets $\Simp\Op_{\varnothing}$.

We claim that this functor $\DGG: \dg^*\Hopf\Op_0^c\rightarrow\Simp\Op_{\varnothing}^{op}$
admits a right adjoint $\DGOmega^*_{\sharp}: \Simp\Op_{\varnothing}^{op}\rightarrow\dg^*\Hopf\Op_0^c$,
of which we deduce the existence from the adjoint functor lifting theorem.
In short, we first set $\DGOmega^*_{\sharp}(\FreeOp(\MOp)) = \FreeOp^c(\DGOmega^*(\MOp))$ for a free operad $\POp = \FreeOp(\MOp)$
generated by a symmetric sequence in simplicial sets $\MOp\in\Simp\Seq_{>0}$,
where $\DGOmega^*(\MOp)\in\dg^*\Hopf\Seq_{>0}^c$
denotes the Hopf symmetric sequence such that $\DGOmega^*(\MOp)(r) = \DGOmega^*(\MOp(r))$, for each $r>0$.
Then we use the Yoneda Lemma to extend this mapping $\DGOmega^*_{\sharp}: \FreeOp(\MOp)\mapsto\DGOmega^*_{\sharp}(\FreeOp(\MOp))$
to a functor from the full subcategory of operads generated by free objects
towards the category of Hopf cochain dg-cooperad.
Finally, we use that any operad admits a natural presentation as a reflexive coequalizer of free operads,
which our functor carries to free equalizers,
in order to extend our construction to the whole category of operads $\Simp\Op_{\varnothing}$.

We equip the category of operads in simplicial sets $\Simp\Op_{\varnothing}$ with its standard model structure,
where the weak-equivalences (respectively, the fibrations) are the morphisms of operads
which define a weak-equivalence (respectively, a fibration) of simplicial sets
in each arity, whereas the cofibrations are characterized by the left lifting property with respect to the class of acyclic fibrations
(see~\cite{BergerMoerdijk} and \cite[\S II.8.2]{FresseBook} for a detailed account
of the definition of this model structure).
Then we easily check that:

\begin{prop}\label{prop:operadic Sullivan model adjunction}
The functors $\DGG: \dg^*\Hopf\Op_0^c\rightleftarrows\Simp\Op_{\varnothing}^{op} :\DGOmega^*_{\sharp}$ define a Quillen adjunction.
\end{prop}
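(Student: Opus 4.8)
The plan is to verify the two defining conditions of a Quillen adjunction: that $\DGG$ preserves cofibrations and that it preserves acyclic cofibrations. Equivalently, since we have the adjunction $\DGG: \dg^*\Hopf\Op_0^c\rightleftarrows\Simp\Op_{\varnothing}^{op} :\DGOmega^*_{\sharp}$ already in hand from the adjoint functor lifting theorem, it suffices to check that the right adjoint $\DGOmega^*_{\sharp}$ preserves fibrations and acyclic fibrations; but the cleaner route, given that both model structures are cofibrantly generated, is to check that $\DGG$ sends the generating cofibrations and generating acyclic cofibrations of $\dg^*\Hopf\Op_0^c$ to cofibrations (respectively acyclic cofibrations) of $\Simp\Op_{\varnothing}^{op}$, i.e.\ to fibrations (respectively acyclic fibrations) of $\Simp\Op_{\varnothing}$. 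By Theorem~\ref{thm:Hopf cooperad model category}, the generating (acyclic) cofibrations of $\dg^*\Hopf\Op_0^c$ have the form $\Sym_{\Op_0^c}(i): \Sym_{\Op_0^c}(\COp)\rightarrow\Sym_{\Op_0^c}(\DOp)$ for $i: \COp\rightarrow\DOp$ a generating (acyclic) cofibration of $\dg^*\Op_0^c$ of the explicit countable/finite-dimensional type described in Theorem~\ref{thm:cooperad model category}.

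First I would reduce to the underlying-algebra level. By construction $\DGG(\AOp)(r) = \Mor_{\dg^*\ComCat}(\AOp(r),\DGOmega^*(\Delta^{\bullet})) = \DGG(\AOp(r))$ arity-wise, and the model structures on both $\Simp\Op_{\varnothing}$ and $\dg^*\Hopf\Op_0^c$ detect fibrations and weak-equivalences arity-wise (on simplicial sets, respectively on commutative cochain dg-algebras, by Proposition~\ref{prop:Hopf cooperad cofibrations} for the cofibration side). So it is enough to know that, in each arity $r>0$, the morphism $\DGG(i(r)): \DGG(\DOp(r))\rightarrow\DGG(\COp(r))$ — or rather the image of a generating cofibration under $\Sym_{\Op_0^c}(-)$, evaluated in arity $r$ — is a Kan fibration, and an acyclic fibration when $i$ is acyclic. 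Here one uses the classical fact, recalled in the excerpt, that $\DGG: \dg^*\ComCat\rightleftarrows\Simp^{op} :\DGOmega^*$ is itself a Quillen adjunction (Sullivan, see~\cite[\S II.7]{FresseBook}), so $\DGG$ carries a cofibration of commutative cochain dg-algebras to a Kan fibration of simplicial sets, and an acyclic one to an acyclic Kan fibration. The one subtlety in arity one is the relative tensor product in the formula $\Sym_{\Op_0^c}(\COp)(1) = \Sym(\COp(1))\otimes_{\Sym(\QQ)}\QQ$; but $\DGG$ turns tensor products into products, $\DGG(A\otimes B)\simeq\DGG(A)\times\DGG(B)$, so this relative tensor product is carried to a fiber product of simplicial sets, and a pullback of a Kan fibration is a Kan fibration — so no new difficulty arises.

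The main obstacle, and the point deserving genuine care, is that the generating cofibrations of $\dg^*\Hopf\Op_0^c$ are the images $\Sym_{\Op_0^c}(i)$, not the morphisms $i$ themselves, and $\Sym_{\Op_0^c}$ does \emph{not} commute with evaluation in arity one on the nose — again because of the relative tensor product. So I must argue that $(\Sym_{\Op_0^c}(i))(r)$ is a cofibration of commutative cochain dg-algebras for each $r$, which is exactly the content of the proof of Proposition~\ref{prop:Hopf cooperad cofibrations}: the generating cofibrations of Hopf cochain dg-cooperads are arity-wise (acyclic) cofibrations of commutative cochain dg-algebras. Granting that, applying $\DGG$ arity-wise and invoking the Sullivan Quillen adjunction finishes the argument. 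Concretely, the ordered steps are: (1) recall that $\DGG(\AOp)(r) = \DGG(\AOp(r))$ and $\DGG(A\otimes B)\simeq\DGG(A)\times\DGG(B)$; (2) invoke Theorem~\ref{thm:Hopf cooperad model category} to reduce to checking the lifting property against images of generating (acyclic) cofibrations, and Proposition~\ref{prop:Hopf cooperad cofibrations} to know these are arity-wise (acyclic) cofibrations of commutative cochain dg-algebras; (3) apply the classical fact that $\DGG: \dg^*\ComCat\to\Simp^{op}$ is left Quillen, so these images go to (acyclic) Kan fibrations arity-wise; (4) conclude that $\DGG$ sends generating (acyclic) cofibrations of $\dg^*\Hopf\Op_0^c$ to (acyclic) fibrations of operads in simplicial sets, i.e.\ to (acyclic) cofibrations in $\Simp\Op_{\varnothing}^{op}$, which by the cofibrantly-generated structure gives the Quillen adjunction. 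I expect no serious obstacle beyond bookkeeping, since essentially everything reduces arity-wise to the Sullivan adjunction — exactly as in the analogous verification of~\cite[\S II.9--II.10]{FresseBook}.
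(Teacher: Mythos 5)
Your proposal is correct and follows essentially the same route as the paper's proof: the key points in both are that (acyclic) cofibrations of Hopf cochain dg-cooperads are arity-wise (acyclic) cofibrations of commutative cochain dg-algebras (Proposition~\ref{prop:Hopf cooperad cofibrations}), that the classical Sullivan adjunction $\DGG: \dg^*\ComCat\rightleftarrows\Simp^{op} :\DGOmega^*$ is Quillen, and that (acyclic) fibrations of operads in simplicial sets are detected arity-wise. Your reduction to the generating (acyclic) cofibrations $\Sym_{\Op_0^c}(i)$ and the discussion of the relative tensor product in arity one are harmless but unnecessary detours, since Proposition~\ref{prop:Hopf cooperad cofibrations} already applies to all (acyclic) cofibrations, which is how the paper argues.
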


\begin{proof}
Recall that the (acyclic) cofibrations of Hopf cochain dg-cooperads form (acyclic) cofibrations of commutative cochain dg-algebras
arity-wise (see Proposition~\ref{prop:Hopf cooperad cofibrations}).
The left adjoint of the Sullivan cochain dg-algebra functor $\DGG$ carries such (acyclic) cofibrations
to (acyclic) fibrations in the category of simplicial sets
since this functor fits in Quillen adjunction
by the classical rational homotopy theory of spaces (see for instance~\cite[Proposition II.7.2.2]{FresseBook}
for this model category formulation of the observations of Sullivan).
We just use that the (acyclic) fibrations of the category of operads in simplicial sets are precisely the operad morphisms
which form an (acyclic) fibration in the category of simplicial sets
to deduce from this result that our functor $\DGG: \dg^*\Hopf\Op_0^c\rightarrow\Simp\Op_{\varnothing}^{op}$
carries the (acyclic) cofibrations of Hopf cochain dg-cooperads
to (acyclic) fibrations in the category of operads, and this proves the claim of this proposition.
\end{proof}

We have natural morphisms of commutative cochain dg-algebras $\chi: \DGOmega^*_{\sharp}(\POp)(r)\rightarrow\DGOmega^*(\POp(r))$
whose adjoints $\chi_{\sharp}: \DGG(\DGOmega^*_{\sharp}(\POp)(r))\rightarrow\POp(r)$ (when we take the adjunction relation
associated to the standard Sullivan functor on the category of simplicial sets)
define an operad morphism $\chi_{\sharp}: \DGG(\DGOmega^*_{\sharp}(\POp))\rightarrow\POp$,
which represent the augmentation morphism of our adjunction $\DGG: \dg^*\Hopf\Op_0^c\rightleftarrows\Simp\Op_{\varnothing}^{op} :\DGOmega^*_{\sharp}$
between Hopf cochain dg-cooperads and operads in simplicial sets.
We consider this comparison morphism $\chi: \DGOmega^*_{\sharp}(\POp)(r)\rightarrow\DGOmega^*(\POp(r))$, defined for any given arity $r>0$.
We aim to establish the following theorem:

\begin{thm}\label{thm:operadic Sullivan model validity}
Let $\POp$ be a cofibrant object of the category of operads in simplicial sets $\Simp\Op_{\varnothing}$. We assume that the space $\POp(1)$ is connected
and that the spaces $\POp(r)$ have a homology with rational coefficients $\DGH_*(\POp(r)) = \DGH_*(\POp(r),\QQ)$
which form a module of finite dimension over $\QQ$ in each degree. Then our comparison morphism defines a weak-equivalence
of commutative cochain dg-algebras
\begin{equation*}
\chi: \DGOmega^*_{\sharp}(\POp)(r)\xrightarrow{\sim}\DGOmega^*(\POp(r))
\end{equation*}
for each arity $r>0$.
\end{thm}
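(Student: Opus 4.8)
The plan is to reduce the general case to the case of a free operad by using the presentation of a cofibrant operad as a retract of a cell complex built from generating cofibrations, exactly as in the strategy of \cite[\S II.10]{FresseBook}, but with one crucial modification: whenever an argument requires that $\POp(1)$ be reduced to a point in \cite{FresseBook}, I will replace the free operad by an operadic analogue of the James construction, so that the intermediate objects have connected components in arity one. More precisely, I would first record the comparison morphism $\chi$ is compatible with the arity-wise Sullivan adjunction and with (homotopy) colimits along cofibrations of operads; since both $\DGOmega^*_\sharp(-)(r)$ and $\DGOmega^*(\POp(r))$ send the relevant homotopy pushouts of operads to homotopy pushouts of commutative cochain dg-algebras (using Proposition~\ref{prop:Hopf cooperad cofibrations} for the left-hand side, so that the pushouts are computed as genuine pushouts over cofibrations), it suffices to prove the statement for the cells themselves, i.e. for free operads $\FreeOp(\MOp)$ generated by a symmetric sequence $\MOp$ with $\MOp(r)$ a sphere or a disc in the appropriate arities.

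For such a free operad one has, by construction, $\DGOmega^*_\sharp(\FreeOp(\MOp)) = \FreeOp^c(\DGOmega^*(\MOp))$, so the problem becomes the comparison of the cofree Hopf cochain dg-cooperad $\FreeOp^c(\DGOmega^*(\MOp))(r) = \bigoplus_{[\ttree]}\bigotimes_{v\in V(\ttree)}\DGOmega^*(\MOp(r_v))$ with $\DGOmega^*(\FreeOp(\MOp)(r))$. The key point is that the free operad $\FreeOp(\MOp)$ in arity $r$ decomposes, as a simplicial set, into a coproduct indexed by trees of products $\prod_{v}\MOp(r_v)$ (with the appropriate $\Sigma_*$-quotients), and the Sullivan functor $\DGOmega^*$ carries finite coproducts of simplicial sets to finite products of commutative cochain dg-algebras and finite products of (finite-type, connected or at least with the correct connectivity) simplicial sets to tensor products, up to quasi-isomorphism. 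Here is exactly where the difficulty of $\POp(1)\neq *$ bites: the free construction introduces vertices of arity one, the corresponding factor $\MOp(1)$ need not be connected, the index set of trees is infinite, and the Künneth/coproduct comparison for $\DGOmega^*$ is not available in that generality. I therefore replace $\FreeOp(\MOp)$ by the James-type model $\WW(\MOp)$ (an operadic bar-type construction on $\MOp$ whose arity-one part is the free monoid / James construction on $\MOp(1)$, hence connected once $\MOp(1)$ is), which is weakly equivalent to the free operad, has only finitely many "tree strata" contributing in each simplicial degree after suitable filtration, and for which the arity-wise Künneth comparison for $\DGOmega^*$ does apply. The main obstacle is precisely the verification that $\DGOmega^*$ converts the James-type colimit defining $\WW(\MOp)(r)$ into the corresponding (homotopy) limit of tensor products computing $\FreeOp^c(\DGOmega^*(\MOp))(r)$; this rests on a cofinality/tower argument together with the finite-type and connectivity hypotheses of the theorem, and on the totalization description of Theorem~\ref{thm:cooperad totalization} and Proposition~\ref{prop:Hopf cooperad totalization} to identify the relevant homotopy limits in the Hopf cochain dg-cooperad category.

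With the free/James case in hand, I would run the cell induction: write the cofibrant operad $\POp$ as a sequential colimit of operads $\POp_\alpha$ with $\POp_{\alpha+1}$ obtained from $\POp_\alpha$ by a pushout along a generating cofibration $\FreeOp(\partial) \to \FreeOp(D)$ of operads. Applying $\DGOmega^*_\sharp$ gives, arity-wise, a pushout of commutative cochain dg-algebras over a cofibration (Proposition~\ref{prop:Hopf cooperad cofibrations}), while applying $\DGOmega^*$ to the simplicial-set pushout, which is again a homotopy pushout by the connectivity/finiteness assumptions, gives a homotopy pushout of commutative cochain dg-algebras; by the five-lemma-type comparison for homotopy pushouts, together with the already-established equivalence on $\FreeOp(\partial)$ and $\FreeOp(D)$ and the inductive hypothesis on $\POp_\alpha$, one gets the equivalence on $\POp_{\alpha+1}(r)$. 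Passing to the colimit uses that $\DGOmega^*$ sends the sequential colimit of simplicial sets along cofibrations to the corresponding homotopy limit of commutative cochain dg-algebras under the finite-type hypothesis, and that $\DGOmega^*_\sharp$ likewise commutes with the colimit arity-wise; finally a retract argument removes the cofibrancy-as-cell-complex assumption, since $\POp$ is only assumed cofibrant. The hard part throughout is the base case — the James-construction replacement and the associated Künneth comparison for $\DGOmega^*$ — and this is what I would develop in full detail; the inductive step is then a formal homotopy-colimit manipulation that parallels \cite[\S II.10]{FresseBook} essentially verbatim.
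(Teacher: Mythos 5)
Your base case is essentially the paper's: the operadic James construction, the filtration of its arity-$r$ component by the number of vertices, the K\"unneth comparison on the strata, and the connectedness of the arity-one input to control the passage to the limit are exactly the content of Propositions~\ref{prop:operadic James construction model} and~\ref{prop:operadic James construction model comparison}. The gap is in your reduction of the general case to this base case. You run a cell induction and claim that $\DGOmega^*_{\sharp}$ applied to an operadic cell attachment yields, arity-wise, a pushout of commutative cochain dg-algebras along a cofibration, invoking Proposition~\ref{prop:Hopf cooperad cofibrations}. But $\DGOmega^*_{\sharp}$ is the \emph{right} adjoint of $\DGG$, so it carries pushouts of operads to limits (pullbacks) of Hopf cochain dg-cooperads; such limits are created in cochain dg-cooperads, not arity-wise in commutative cochain dg-algebras, and Proposition~\ref{prop:Hopf cooperad cofibrations} says nothing about them. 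Symmetrically, on the space side the arity-wise effect of attaching an operadic cell to $\POp_{\alpha}$ is not a pushout along the cell in each arity: $\POp_{\alpha+1}(r)$ is an infinite filtered colimit of strata involving free composites, and comparing $\DGOmega^*$ of such a colimit with the corresponding limit of algebras is an Eilenberg--Moore/Mittag-Leffler problem that your ``five-lemma-type comparison for homotopy pushouts'' does not address. This is precisely the difficulty the paper flags when it says one \emph{cannot} go through free operad structures once arity one is nontrivial, so the inductive step is not ``formal'' and cannot parallel \cite[\S II.10]{FresseBook} verbatim.

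A second, related error: your $\WW(\MOp)$ is claimed to be weakly equivalent to the free operad $\FreeOp(\MOp)$. It is not. The operadic James construction $\FreeOp_*(\MOp)$ is the left adjoint of the forgetful functor to coaugmented symmetric sequences $\IOp/\Simp\Seq_{>0}$ (it collapses the chosen point of $\MOp(1)$ to the operadic unit), so it computes a different functor and cannot simply be substituted for the cells of a cellular decomposition whose attaching maps are free operad morphisms. The paper's resolution of this tension is to abandon the cellular induction altogether: one resolves $\POp$ by the \emph{reduced cotriple resolution} $\Res_*(\POp)_{\bullet} = \FreeOp_*^{\bullet+1}(\POp)$ associated to the James adjunction (Proposition~\ref{prop:reduced cotriple resolution homotopy}), whose simplicial levels are James constructions on connected arity-one input, and then performs cosimplicial descent: the geometric realization is converted, through the Quillen adjunction, into a totalization of Hopf cochain dg-cooperads, which is identified with the conormalization via the bar--cobar framing of Theorem~\ref{thm:cooperad totalization} and Proposition~\ref{prop:Hopf cooperad totalization}, while the extra degeneracies of the resolution handle the space side. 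This totalization machinery (resting on the conilpotence of the cooperads and the bar filtration) is exactly what replaces the sequential-colimit and $\lim^1$ control that your last step leaves unaddressed; without it, or an equivalent Eilenberg--Moore argument for operadic cell attachments, your reduction does not go through.
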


We devote the rest of this section to the proof of this result.
We face some difficulties when we consider free operads as in our construction of the functor $\DGOmega^*_{\sharp}: \POp\mapsto\DGOmega^*_{\sharp}(\POp)$,
because the functor $\DGOmega^*(-)$ carries the coproduct underlying the free operad to a product,
and this product is countable in each arity when we allow non-trivial terms in arity one.
We can not therefore go through free operad structures in our verification of the claim of the theorem.
We use an operadic analogue of the James construction to get round this difficulty.

\subsubsection{The operadic James construction}\label{operadic James construction}
We consider the category of under-objects $\IOp/\Simp\Seq_{>0}$ in the category of symmetric sequences in simplicial sets $\Simp\Seq_{>0}$
where $\IOp$ denotes the symmetric sequence such that $\IOp(1) = *$
and $\IOp(r) = \varnothing$ for $r>1$. This symmetric sequence $\IOp$ actually represents the initial object
in the category of operads.
We define the operadic James construction $\FreeOp_*(\MOp)$ associated to an object of this category $\MOp\in\IOp/\Simp\Seq_{>0}$
as the result of the following pushout in the category of operads:
\begin{equation*}
\xymatrix{ \FreeOp(\IOp)\ar[r]\ar[d] & \IOp\ar@{.>}[d] \\
\FreeOp(\MOp)\ar@{.>}[r] & \FreeOp_*(\MOp) },
\end{equation*}
where we consider the free operad morphism $\FreeOp(\IOp)\rightarrow\FreeOp(\MOp)$ associated to the canonical morphism $\IOp\rightarrow\MOp$
attached to our object $\MOp$ on the one hand, and the operad morphism $\FreeOp(\IOp)\rightarrow\IOp$
induced by the identity morphism of the symmetric sequence $\IOp$
on the other hand.

Intuitively, the morphism $\IOp\rightarrow\MOp$ is determined by the choice of a base point
in the component of arity one of our symmetric sequence $*\in\MOp(1)$,
and the above pushout makes the factors $*\in\MOp(1)$ equal to operadic units in the operad $\FreeOp_*(\MOp)$.


\subsubsection{The reduced cotriple resolution of operads}\label{reduced cotriple resolution}
We easily see that the mapping $\FreeOp_*: \MOp\mapsto\FreeOp_*(\MOp)$
defines a left adjoint of the obvious functor $\omega_*: \Simp\Op_{\varnothing}\rightarrow\IOp/\Simp\Seq_{>0}$
which forgets about the composition products in the definition of an operad $\POp\in\Simp\Op_{\varnothing}$,
but retains the operadic unit $1\in\POp(1)$.
We consider the cotriple resolution associated to this adjunction $\FreeOp_*: \IOp/\Simp\Seq_{>0}\rightleftarrows\Simp\Op_{\varnothing} :\omega_*$.
In what follows, we refer to this version of the cotriple resolution as the reduced cotriple resolution
in order to distinguish this object from the standard cotriple resolution of operads,
which we associate to the free operad adjunction $\FreeOp_*: \Simp\Seq_{>0}\rightleftarrows\Simp\Op_{\varnothing} :\omega$.
We usually drop the forgetful functor $\omega_*$ from our notation for short,
and we also write $\FreeOp_* = \omega_*\FreeOp_*$ for the composite of the operadic James construction $\FreeOp_*$
with the forgetful functor from operads to our category of coaugmented
symmetric sequences $\IOp/\Simp\Seq_{>0}$.
Then we define our reduced version of the cotriple resolution of an operad $\POp\in\Simp\Op_{\varnothing}$
as the simplicial object such that $\Res_*(\POp)_n = \FreeOp_*^{n+1}(\POp)$,
for each dimension $n\in\NN$.
The face operators $d_i: \Res_*(\POp)_n\rightarrow\Res_*(\POp)_{n-1}$
are defined by applying an adjunction augmentation $\lambda: \FreeOp_*\omega_*\rightarrow\Id$ on the $i+1$st factor $\FreeOp_* = \FreeOp_*\omega_*$
of the composite $\Res_*(\POp)_n = \FreeOp_*^{n+1}(\POp)$,
whereas the degeneracies $s_j: \Res_*(\POp)_n\rightarrow\Res_*(\POp)_{n+1}$
are defined by inserting an adjunction unit $\iota: \Id\rightarrow\omega_*\FreeOp_*$
inside the $j$th factor $\FreeOp_* = \FreeOp_*\omega_*$.
This simplicial object is equipped with an augmentation $\epsilon: \Res_*(\POp)_0\rightarrow\POp$,
which is also given by the augmentation morphism of our adjunction $\lambda: \FreeOp_*\omega_*\rightarrow\Id$
on the operad $\Res_*(\POp)_0 = \FreeOp_*(\POp)$. We have the following result:

\begin{prop}\label{prop:reduced cotriple resolution homotopy}
If $\POp$ is cofibrant as a symmetric sequence, then the reduced cotriple resolution $\Res_*(\POp)_{\bullet} = \FreeOp_*^{{\bullet}+1}(\POp)$
forms a Reedy cofibrant simplicial object in the category of operads in simplicial sets.
Furthermore, the augmentation $\Res_*(\POp)_{\bullet}\rightarrow\POp$ induces a weak-equivalence $|\Res_*(\POp)_{\bullet}|\xrightarrow{\sim}\POp$
when we pass to the geometric realization in $\Simp\Op_{\varnothing}$.
\end{prop}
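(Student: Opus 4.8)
The statement has two parts: a Reedy cofibrancy claim for the reduced cotriple resolution, and a weak-equivalence claim for its geometric realization. The plan is to treat these separately, in each case reducing to the analogous well-known fact for the standard cotriple resolution (built from the genuine free operad functor $\FreeOp$) and then transferring it along the pushout square that defines the operadic James construction $\FreeOp_*$. First I would fix terminology: by ``cofibrant as a symmetric sequence'' we mean that each $\POp(r)$ is a cofibrant simplicial set, equivariantly, i.e.\ $\POp$ is cofibrant in the projective model structure on $\Simp\Seq_{>0}$; since every simplicial set is cofibrant, this is automatic here, but stating it makes the Reedy argument cleaner. I would also recall that the category $\IOp/\Simp\Seq_{>0}$ of coaugmented symmetric sequences carries a model structure (created from $\Simp\Seq_{>0}$), and that $\FreeOp_*: \IOp/\Simp\Seq_{>0}\rightleftarrows\Simp\Op_{\varnothing} :\omega_*$ is a Quillen adjunction: the left adjoint $\FreeOp_*$ preserves cofibrations because it is the composite of the free-operad functor with the pushout along $\FreeOp(\IOp)\to\IOp$, both of which preserve cofibrations (the free operad functor is a left Quillen functor, and pushouts preserve cofibrations).

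\emph{Reedy cofibrancy.} Here I would argue exactly as in the treatment of the standard cotriple resolution in~\cite[\S II.8]{FresseBook} and~\cite{BergerMoerdijk}. The point is that for any comonad $\mathbb{G} = \FreeOp_*\omega_*$ arising from a Quillen adjunction whose left adjoint preserves cofibrations, the cotriple resolution $\mathbb{G}^{{\bullet}+1}(\POp)$ is Reedy cofibrant as soon as the object of ``generators'' is cofibrant: the latching object $L_n\Res_*(\POp)$ in dimension $n$ is identified with $\FreeOp_*$ applied to the $n$th latching object of the resolution $\omega_*\FreeOp_*^{{\bullet}}(\POp)$ computed one level down in $\IOp/\Simp\Seq_{>0}$, and the latching map is $\FreeOp_*$ applied to a cofibration of coaugmented symmetric sequences, hence a cofibration of operads. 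This is a formal ``bar construction is Reedy cofibrant'' argument; I would spell out the identification of latching objects and invoke that $\FreeOp_*$ sends cofibrations to cofibrations. The hypothesis that $\POp$ is cofibrant as a symmetric sequence is used to start the induction (the coaugmented symmetric sequence $\IOp\to\omega_*(\POp)$ is then cofibrant in $\IOp/\Simp\Seq_{>0}$).

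\emph{The weak-equivalence $|\Res_*(\POp)_{\bullet}|\xrightarrow{\sim}\POp$.} The standard comonadic argument gives that the augmented simplicial object $\Res_*(\POp)_{\bullet}\to\POp$ admits an ``extra degeneracy'' after applying the forgetful functor $\omega_*$ to $\IOp/\Simp\Seq_{>0}$: indeed, $\omega_*\Res_*(\POp)_{\bullet} = \FreeOp_*\omega_*\FreeOp_*^{{\bullet}}(\POp)$ is split by the adjunction unit $\iota$, so $\omega_*\Res_*(\POp)_{\bullet}\to\omega_*(\POp)$ is a simplicial homotopy equivalence, in particular its realization is a weak-equivalence of symmetric sequences. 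Since $\omega_*$ detects weak-equivalences of operads (they are defined arity-wise) and geometric realization of operads in simplicial sets is computed arity-wise, the realization $|\Res_*(\POp)_{\bullet}|\to\POp$ is a weak-equivalence in $\Simp\Op_{\varnothing}$. I would also note that, because the simplicial object is Reedy cofibrant by the first part, this geometric realization computes the homotopy colimit, so the statement is the expected one.

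\emph{Main obstacle.} The genuinely non-formal point, and the one I would treat most carefully, is that the operadic James construction $\FreeOp_*$, unlike the free operad $\FreeOp$, is defined by a pushout in operads, so I must check that the comonad $\FreeOp_*\omega_*$ really does have the two properties used above — preservation of cofibrations and the existence of the extra degeneracy — and that the identification of latching objects goes through with this pushout present. The extra-degeneracy point is purely formal (it only uses that $\FreeOp_*$ is left adjoint to $\omega_*$, via the triangle identities), so no difficulty there. The cofibration-preservation point requires knowing that pushout along $\FreeOp(\IOp)\to\IOp$ in $\Simp\Op_{\varnothing}$ preserves cofibrations of operads; this follows since $\FreeOp(\IOp)\to\IOp$ is itself built from generating cofibrations in a suitable sense, or more simply since cofibrations in any model category are stable under cobase change. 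The latching-object computation then reduces to the fact that $\FreeOp_*$, being a left adjoint, commutes with the colimits defining latching objects, together with the observation that the relevant latching maps are the $\FreeOp_*$-images of cofibrations in $\IOp/\Simp\Seq_{>0}$. I would present the argument so that all of this is explicitly reduced to these three stable-under-colimit / adjunction facts, citing~\cite[\S II.8.2]{FresseBook} and~\cite{BergerMoerdijk} for the analogous verifications with the non-reduced resolution.
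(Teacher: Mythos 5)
Your proof is correct and follows essentially the same route as the paper, which likewise adapts the standard (unreduced) cotriple-resolution arguments of~\cite[\S II.8.5]{FresseBook}: Reedy cofibrancy via the identification of latching maps as images under $\FreeOp_*$ of cofibrations of coaugmented symmetric sequences, and the weak-equivalence $|\Res_*(\POp)_{\bullet}|\xrightarrow{\sim}\POp$ via the extra degeneracies $s_{-1}$ given by the adjunction unit, which exist only at the level of symmetric sequences. One small correction: your parenthetical claim that cofibrancy as a symmetric sequence is automatic is wrong (projective cofibrancy of a symmetric sequence requires essentially free $\Sigma_r$-actions, not merely arity-wise cofibrancy of the underlying simplicial sets), but since this hypothesis is assumed in the statement it does not affect your argument.
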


\begin{proof}
The proof of this proposition follows from a straightforward adaptation of the observations of~\cite[\S II.8.5]{FresseBook},
where we prove that the standard (unreduced) cotriple resolution forms a Reedy cofibrant simplicial object
in the category of operads (when our operad is cofibrant as a symmetric sequence)
and that the augmentation associated to this resolution also induces a weak-equivalence of operads
when we pass to the geometric realization.
Let us simply mention that the proof that the augmentation $\Res_*(\POp)_{\bullet}\rightarrow\POp$ induces a weak-equivalence $|\Res_*(\POp)_{\bullet}|\xrightarrow{\sim}\POp$ when we pass to the geometric realization follows from the observation
that the reduced cotriple resolution $\Res_*(\POp)_{\bullet}$
is endowed with extra degeneracies $s_{-1}: \Res_*(\POp)_n\rightarrow\Res_*(\POp)_{n+1}$
which are defined by inserting an adjunction unit $\iota: \Id\rightarrow\omega_*\FreeOp_*$
in front of our composite functor $\Res_*(\POp)_{\bullet} = \FreeOp_*^{{\bullet}+1}(\POp)$. (But these extra degeneracies
are only defined in the category of symmetric sequences.)
\end{proof}

We use the reduced cotriple resolution $\Res_*(\POp)_{\bullet}$ in order to reduce the proof of our main theorem
to the case of the operadic James construction.
We check the validity of our claim on this case first. We rely on the following observation:

\begin{prop}\label{prop:operadic James construction model}
For the operadic James construction $\POp = \FreeOp_*(\MOp)$, we have an identity:
\begin{equation*}
\DGOmega_{\sharp}^*(\FreeOp_*(\MOp)) = \FreeOp^c(\overline{\DGOmega^*(\MOp)})
\end{equation*}
in the category of cochain dg-cooperads, where $\overline{\DGOmega^*(\MOp)}$ denotes the symmetric collection
such that $\overline{\DGOmega^*(\MOp)}(1) = \ker(\DGOmega^*(\MOp)(1)\rightarrow\DGOmega^*(\pt))$,
and $\overline{\DGOmega^*(\MOp)}(r) = \DGOmega^*(\MOp)(r)$
for $r>1$.
\end{prop}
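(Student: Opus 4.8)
The plan is to compute both sides of the claimed identity directly from their definitions and to match them. On the left, we use that $\DGOmega^*_{\sharp}$ is defined so that it takes colimits of operads (computed arity-wise in simplicial sets) to limits of Hopf cochain dg-cooperads, and so that it takes a free operad $\FreeOp(\MOp)$ to the cofree cooperad $\FreeOp^c(\DGOmega^*(\MOp))$. Applying $\DGOmega^*_{\sharp}$ to the defining pushout square of $\FreeOp_*(\MOp)$ in \S\ref{operadic James construction}, and using that $\DGOmega^*_{\sharp}$ carries this pushout of operads to the corresponding pullback of Hopf cochain dg-cooperads, we get that $\DGOmega^*_{\sharp}(\FreeOp_*(\MOp))$ is the pullback of
\begin{equation*}
\FreeOp^c(\DGOmega^*(\MOp))\rightarrow\FreeOp^c(\DGOmega^*(\IOp))\leftarrow\DGOmega^*_{\sharp}(\IOp),
\end{equation*}
where the first map is $\FreeOp^c$ applied to the symmetric-sequence morphism $\DGOmega^*(\MOp)\to\DGOmega^*(\IOp)$ dual to the basepoint $\IOp\to\MOp$, and the second is the image under $\DGOmega^*_{\sharp}$ of the operad morphism $\FreeOp(\IOp)\to\IOp$. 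First I would identify $\DGOmega^*(\IOp)$: since $\IOp(1)=\pt$ and $\IOp(r)=\varnothing$ for $r>1$, we get $\DGOmega^*(\IOp)(1) = \DGOmega^*(\pt) = \QQ$ and $\DGOmega^*(\IOp)(r) = \DGOmega^*(\varnothing) = 0$ for $r>1$; hence $\FreeOp^c(\DGOmega^*(\IOp))$ is the cofree cooperad on the symmetric sequence concentrated in arity one with value $\QQ$, and $\DGOmega^*_{\sharp}(\IOp)$ is the terminal coaugmented Hopf cochain dg-cooperad, namely the one with all components equal to $\QQ$ (dual to the fact that $\IOp$ is initial among operads).

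Next I would compute the pullback. The cofree cooperad $\FreeOp^c(\NOp)(r) = \bigoplus_{[\ttree]}\FreeOp_{\ttree}^c(\NOp)$ decomposes as a direct sum over isomorphism classes of trees, and the morphism $\FreeOp^c(\DGOmega^*(\MOp))\to\FreeOp^c(\DGOmega^*(\IOp))$ is, tree-wise, the tensor product over vertices $v$ of the maps $\DGOmega^*(\MOp)(r_v)\to\DGOmega^*(\IOp)(r_v)$; on a vertex of arity $r_v>1$ the target is $0$, while on a univalent vertex the target is $\QQ$ and the map is the augmentation $\DGOmega^*(\MOp)(1)\to\DGOmega^*(\pt)=\QQ$ induced by the basepoint. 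Pulling back against the terminal object $\DGOmega^*_{\sharp}(\IOp)$, whose map to $\FreeOp^c(\DGOmega^*(\IOp))$ is the coaugmentation $\QQ\to\FreeOp^c(\DGOmega^*(\IOp))(r)$ landing in the unit-tree summand, has the effect of killing every summand on which the composite does not vanish — that is, every tree carrying a univalent vertex — and of replacing the arity-one generators by their augmentation ideals. Concretely the surviving summands are exactly those $\FreeOp_{\ttree}^c$ with no univalent vertices, but with the arity-one factors $\DGOmega^*(\MOp)(1)$ replaced by $\overline{\DGOmega^*(\MOp)}(1) = \ker(\DGOmega^*(\MOp)(1)\to\DGOmega^*(\pt))$; and this is precisely the description of $\FreeOp^c(\overline{\DGOmega^*(\MOp)})$ as a sum over trees of treewise tensor products, since a vertex of arity one in a tree whose vertex-label takes values in a coaugmentation coideal contributes an element of the augmentation ideal, and the conilpotence bookkeeping matches. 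I would check that the cooperad structure maps agree on both sides, which is immediate since both are induced by the tree-grafting isomorphisms $\FreeOp_{\thetatree}^c\xrightarrow{\simeq}\FreeOp_{\stree}^c\otimes\FreeOp_{\ttree}^c$ and the pullback is formed compatibly with these.

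The main obstacle I expect is justifying that $\DGOmega^*_{\sharp}$ actually turns this particular pushout of operads into the pullback of Hopf cochain dg-cooperads with the expected underlying object: $\DGOmega^*_{\sharp}$ is a right adjoint, so it preserves limits, but the pushout of operads on the left is \emph{not} a limit — one must instead use the explicit construction of $\DGOmega^*_{\sharp}$ via the presentation of an operad as a reflexive coequalizer of free operads (as recalled in \S\ref{operadic Sullivan model:construction}) and the fact that $\FreeOp_*(\MOp)$ has a transparent such presentation. Equivalently, and more cleanly, I would use the adjunction $\FreeOp_*\dashv\omega_*$ of \S\ref{reduced cotriple resolution}: by the universal property of the pushout, $\FreeOp_*(\MOp)$ is the value at $\MOp$ of the left adjoint to $\omega_*$, so it suffices to exhibit a natural isomorphism $\Mor_{\dg^*\Hopf\Op_0^c}(\DGOmega^*_{\sharp}(\FreeOp_*(\MOp)),\AOp)\cong\Mor_{\dg^*\Hopf\Op_0^c}(\FreeOp^c(\overline{\DGOmega^*(\MOp)}),\AOp)$ for every Hopf cochain dg-cooperad $\AOp$, and both sides unwind — via the $\DGG\dashv\DGOmega^*_{\sharp}$ adjunction and the cofree–coaugmentation-coideal adjunction of \S\ref{cooperads:general definition} respectively — to the set of pointed maps of symmetric sequences $\MOp\to\omega_*\DGG(\AOp)$, using $\DGG(A\otimes B)\simeq\DGG(A)\otimes\DGG(B)$ and $\DGOmega^*(\pt)=\QQ$ to identify the basepoint data. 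Chasing this Yoneda argument and checking the pointedness bookkeeping against the definition of $\overline{\DGOmega^*(\MOp)}$ is the one step that needs care; the rest is the formal tree combinatorics above.
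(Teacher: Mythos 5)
Your opening move is the same as the paper's: apply $\DGOmega^*_{\sharp}$ to the defining pushout of $\FreeOp_*(\MOp)$ and identify the result as the pullback of the cospan $\FreeOp^c(\DGOmega^*(\MOp))\rightarrow\FreeOp^c(\DGOmega^*(\IOp))\leftarrow\DGOmega^*_{\sharp}(\IOp)$. But the ``main obstacle'' you then worry about is not an obstacle at all: the pushout of operads is a limit in $\Simp\Op_{\varnothing}^{op}$, which is the domain of the right adjoint $\DGOmega^*_{\sharp}$, so the formal argument applies verbatim (this is exactly the paper's remark that $\DGOmega^*_{\sharp}$ carries colimits of operads to limits of Hopf cochain dg-cooperads), and your proposed Yoneda workaround gets the variance of both adjunctions backwards: the $\DGG\dashv\DGOmega^*_{\sharp}$ adjunction describes morphisms $\AOp\rightarrow\DGOmega^*_{\sharp}(\POp)$ \emph{into} the right adjoint's value, and the cofree cooperad adjunction describes morphisms $\COp\rightarrow\FreeOp^c(\NOp)$ \emph{into} the cofree object, so neither side of your comparison $\Mor(\DGOmega^*_{\sharp}(\FreeOp_*(\MOp)),\AOp)\cong\Mor(\FreeOp^c(\overline{\DGOmega^*(\MOp)}),\AOp)$ unwinds as you claim.

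The more serious gap is in the computation of the pullback itself. First, $\DGOmega^*_{\sharp}(\IOp)$ is the terminal Hopf cochain dg-cooperad $\IOp^c$ with $\IOp^c(1)=\QQ$ and $\IOp^c(r)=0$ for $r>1$, not the cooperad with all components equal to $\QQ$ (a general $\AOp(r)$ admits no algebra map to $\QQ$, whereas the maps to $\IOp^c$ are forced by the counit and the zero algebra). Second, you compute the pullback summand-by-summand as if limits of conilpotent cooperads were created arity-wise in dg-modules; they are not, and here the arity-wise pullback is strictly larger than the answer: for $r>1$ one has $\FreeOp^c(\DGOmega^*(\IOp))(r)=0$, so arity-wise nothing would be cut down at all, and in arity one the kernel of the map $A^{\otimes k}\rightarrow\QQ$ given by tensoring augmentations of $A=\DGOmega^*(\MOp)(1)$ is larger than the $k$-fold tensor power of the augmentation ideal. (Your description of the surviving summands is also internally inconsistent: trees with univalent vertices are not killed in $\FreeOp^c(\overline{\DGOmega^*(\MOp)})$; they survive with the augmentation ideal placed at those vertices.) The missing idea, which is the actual content of the paper's proof, is to recognize both legs of the cospan as images under the cofree cooperad functor of a cospan of symmetric sequences, via $\IOp^c=\FreeOp^c(0)$ and $\DGOmega^*(\IOp)=\IOp^c$, to use that the forgetful functor from Hopf cochain dg-cooperads to cochain dg-cooperads creates limits, and then to use that $\FreeOp^c$, being right adjoint to the coaugmentation coideal functor, carries the pullback $\overline{\DGOmega^*(\MOp)}$ of $\DGOmega^*(\MOp)\rightarrow\DGOmega^*(\IOp)\leftarrow 0$ in symmetric sequences to the pullback in the category of cochain dg-cooperads; without this step your tree-wise bookkeeping computes the wrong object.
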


\begin{proof}
The functor $\DGOmega^*_{\sharp}$ carries the pushout diagram in the definition of the operadic James construction
to a pullback diagram of the form
\begin{equation*}
\xymatrix{ \DGOmega^*_{\sharp}(\FreeOp_*(\MOp))\ar@{.>}[d]\ar@{.>}[r] & \IOp^c\ar[d] \\
\FreeOp^c(\DGOmega^*(\MOp))\ar[r] & \FreeOp^c(\DGOmega^*(\IOp)) }
\end{equation*}
in the category of Hopf cochain dg-cooperads, where $\IOp^c$ denotes the cooperad such that $\IOp^c(1) = \QQ$ and $\IOp^c(r) = 0$ for $r>1$.
Note simply that we have $\DGOmega^*_{\sharp}(\IOp) = \IOp^c$.
In fact, we can identify the collection $\IOp^c$ with the final object in the category of Hopf cochain dg-cooperads,
and this relation $\DGOmega^*_{\sharp}(\IOp) = \IOp^c$ also follows from the observation that the right adjoint functor $\DGOmega^*_{\sharp}$
carries any colimit in the category of operads in simplicial sets
to a limit in the category of Hopf cochain dg-cooperads.

We trivially have $\DGOmega^*(\IOp) = \IOp^c$, and $\DGOmega^*(\MOp) = \overline{\DGOmega^*(\MOp)}\oplus\IOp^c$
when we forget about commutative cochain dg-algebra structures, with the splitting induced by the map $*\rightarrow\MOp(1)$
attached to our object $\MOp\in\IOp/\Simp\Seq_{>0}$.
We moreover have $\IOp^c = \FreeOp^c(0)$, where we consider the cofree cooperad on the null symmetric sequence $0$,
and the right-hand side vertical morphism of our diagram
is identified with the morphism of cofree cooperads
induced by the null morphism
of symmetric sequences $0\rightarrow\DGOmega^*(\IOp) = \IOp^c$.
Recall that the forgetful functor from Hopf cochain dg-cooperads to cochain dg-cooperads creates limits.
The result of the lemma just follows from the observation that the cofree cooperad functor carries the obvious pullback diagram
\begin{equation*}
\xymatrix{ \overline{\DGOmega^*(\MOp)}\ar@{.>}[d]\ar@{.>}[r] & 0\ar[d] \\
\DGOmega^*(\MOp)\ar[r] & \DGOmega^*(\IOp) }
\end{equation*}
in the category of symmetric sequences to a pullback diagram in the category of cochain dg-cooperads.
\end{proof}

We use the result of the previous proposition to establish the following statement:

\begin{prop}\label{prop:operadic James construction model comparison}
If $\MOp(1)$ is connected and the spaces $\MOp(r)$, $r>0$, have a rational homology $\DGH_*(\MOp(r)) = \DGH_*(\MOp(r),\QQ)$
that form a module of finite dimension over $\QQ$ degreewise,
then the comparison morphism associated to the operadic James construction $\POp = \FreeOp_*(\MOp)$
defines a weak-equivalence $\chi: \DGOmega^*_{\sharp}(\FreeOp_*(\MOp))(r)\xrightarrow{\sim}\DGOmega^*(\FreeOp_*(\MOp)(r))$.
\end{prop}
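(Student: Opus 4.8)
The plan is to combine the explicit identification of $\DGOmega^*_{\sharp}(\FreeOp_*(\MOp))$ as a cofree cooperad from Proposition~\ref{prop:operadic James construction model} with an explicit description of the operadic James construction $\FreeOp_*(\MOp)$ itself. First I would observe that, just as the free operad $\FreeOp(\MOp)(r)$ splits as a coproduct $\coprod_{[\ttree]}\FreeOp_{\ttree}(\MOp)$ over isomorphism classes of trees, the operadic James construction admits a similar splitting but where the contribution of each arity-one vertex $v$ is taken in the \emph{reduced} object $\overline{\MOp}(1) = \MOp(1)/(*)$ rather than in all of $\MOp(1)$ — collapsing the basepoint of $\MOp(1)$ to an operadic unit is exactly what the defining pushout does. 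Concretely $\FreeOp_*(\MOp)(r) \cong \coprod_{[\ttree]}\FreeOp_{\ttree}(\overline{\MOp})$, where in the treewise tensor (cartesian) product we now use $\overline{\MOp}(1)$ at the univalent vertices and $\MOp(r_v)$ at the vertices of arity $r_v>1$; the key point is that for each fixed arity $r$, only \emph{finitely many} trees $\ttree$ contribute a non-basepoint component, since a tree with many arity-one vertices in a row contributes only the (collapsed) basepoint. This is precisely the operadic analogue of the James construction replacing the infinite-dimensional free monoid.

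Next I would match this up with the cooperad side. By Proposition~\ref{prop:operadic James construction model} we have $\DGOmega^*_{\sharp}(\FreeOp_*(\MOp)) = \FreeOp^c(\overline{\DGOmega^*(\MOp)})$, so in arity $r$ this reads $\DGOmega^*_{\sharp}(\FreeOp_*(\MOp))(r) = \bigoplus_{[\ttree]}\bigotimes_{v\in V(\ttree)}\overline{\DGOmega^*(\MOp)}(r_v)$, where $\overline{\DGOmega^*(\MOp)}(1) = \ker(\DGOmega^*(\MOp(1))\to\DGOmega^*(\pt))$ is the augmentation ideal of piecewise-linear forms on $\MOp(1)$. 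The comparison morphism $\chi$ in arity $r$ is, termwise on the tree $\ttree$, the canonical map
\begin{equation*}
\bigotimes_{v\in V(\ttree)}\overline{\DGOmega^*(\MOp(r_v))} \longrightarrow \DGOmega^*\Bigl(\prod_{v\in V(\ttree)}\overline{\MOp}(r_v)\Bigr),
\end{equation*}
induced by the Künneth/shuffle map for piecewise-linear forms, where on the right $\overline{\MOp}(1) = \MOp(1)/(*)$ carries the reduced forms. Since the direct sum over $[\ttree]$ on the left reduces to a finite sum in each fixed arity $r$ (matching the finite coproduct on the right), and since $\DGOmega^*$ sends finite coproducts of spaces to finite products of dg-algebras, $\chi$ on arity $r$ is a finite direct sum of the Künneth maps above. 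It therefore suffices to show each such Künneth map is a quasi-isomorphism.

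Finally, I would invoke the classical Künneth theorem for Sullivan forms: for spaces $X,Y$ of finite rational type, $\DGOmega^*(X)\otimes\DGOmega^*(Y)\xrightarrow{\sim}\DGOmega^*(X\times Y)$. Here each factor is either $\MOp(r_v)$ with $r_v>1$ — which has degreewise finite-dimensional rational homology by hypothesis — or $\overline{\MOp}(1) = \MOp(1)/(*)$, where the connectedness hypothesis on $\MOp(1)$ ensures this reduced quotient still has degreewise finite-dimensional reduced cohomology and the reduced-forms model $\overline{\DGOmega^*(\MOp(1))}$ computes it correctly. Assembling the finitely many Künneth quasi-isomorphisms into the finite direct sum over trees gives that $\chi$ is a quasi-isomorphism in each arity $r>0$. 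The main obstacle here is precisely the bookkeeping that makes the tree sum finite in each arity: I expect the crux to be verifying carefully that the pushout defining $\FreeOp_*(\MOp)$ really does produce the reduced treewise splitting with $\overline{\MOp}(1)$ at univalent vertices — i.e. that collapsing $*\in\MOp(1)$ to the operadic unit kills exactly the redundant basepoint components of the free operad and nothing more — so that the connectedness of $\MOp(1)$ can be leveraged through $\overline{\MOp}(1)$ and the finiteness hypotheses in arities $r>1$ suffice to control the rest.
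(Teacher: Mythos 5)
There is a genuine gap, and it sits exactly at the point you flag as the ``key point'': the claim that in each fixed arity $r$ only finitely many trees contribute to $\FreeOp_*(\MOp)(r)$ is false. Collapsing the basepoint $*\in\MOp(1)$ to the operadic unit only removes tuples in which some univalent vertex carries the basepoint; a chain of arity-one vertices all labeled by elements of $\MOp(1)\setminus *$ survives, and such chains can be arbitrarily long (exactly as words of arbitrary length survive in the classical James construction $J(X)$). So in each arity the decomposition still runs over infinitely many trees, $\DGOmega^*_{\sharp}(\FreeOp_*(\MOp))(r)$ is an infinite \emph{direct sum} of treewise tensor products, while $\DGOmega^*(\FreeOp_*(\MOp)(r))$ sees an infinite increasing union of spaces, and the comparison map is a sum-into-limit map that is not automatically a quasi-isomorphism. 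Your reduction to ``a finite direct sum of K\"unneth maps'' therefore does not exist, and with it goes the whole second half of the argument. Relatedly, your stated splitting of $\FreeOp_*(\MOp)(r)$ as a coproduct over trees with $\overline{\MOp}(1)$ at univalent vertices is only a set-level statement about reduced forms; at the level of simplicial sets one only has a filtration whose subquotients are smash-type quotients, not a coproduct decomposition, which is what one needs in order to apply $\DGOmega^*$ and K\"unneth termwise.

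The missing idea is the convergence argument, and it is precisely where the connectedness of $\MOp(1)$ is used (not, as you suggest, to guarantee finite-dimensionality of the reduced cohomology of $\MOp(1)/(*)$). The paper filters $\FreeOp_*(\MOp)(r)$ by the number of vertices $\sharp V(\ttree)\leq m$, obtains cofiber sequences whose cofibers are wedges of the smash-type quotients $\FreeOp_{\ttree}^{\wedge}(\MOp)$, proves by K\"unneth that the comparison map is a weak-equivalence on these subquotients, and deduces by induction that each truncated comparison map $\FreeOp^c_{\leq m}(\overline{\DGOmega^*(\MOp)})(r)\rightarrow\DGOmega^*(\FreeOp^{\leq m}_*(\MOp)(r))$ is a weak-equivalence. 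The connectedness of $\MOp(1)$ then forces the reduced cohomology contributed by each univalent vertex to sit in positive degrees, so that the restriction maps of the tower $\DGOmega^*(\FreeOp_*^{\leq m}(\MOp)(r))\rightarrow\DGOmega^*(\FreeOp_*^{\leq m-1}(\MOp)(r))$ are surjective in cohomology once $m$ is large with respect to the degree; this is what lets one pass to the limit of the tower and conclude that the full comparison map $\chi$ is a quasi-isomorphism. Without some such degreewise stabilization argument your proof cannot be completed.
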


\begin{proof}
Recall that the cofree cooperad $\FreeOp^c(\NOp)$ associated to a symmetric collection $\NOp\in\dg^*\Seq_{>0}^c$
admits an expansion of the form $\FreeOp^c(\NOp)(r) = \bigoplus_{[\ttree]}\FreeOp^c_{\ttree}(\NOp)$,
for each arity $r>0$,
where we consider the treewise tensor products $\FreeOp^c_{\ttree}(\NOp) = \bigotimes_{v\in V(\ttree)}\NOp(r_v)$,
associated to a set of representatives of isomorphism classes of trees with $r$-ingoing edges $\ttree$.
For the free operad $\FreeOp(\MOp)$ associated to a symmetric collection in simplicial sets $\MOp\in\Simp\Seq_{>0}$,
we dually have $\FreeOp(\MOp) = \coprod_{[\ttree]}\FreeOp_{\ttree}(\MOp)$
with $\FreeOp_{\ttree}(\MOp) = \times_{v\in V(\ttree)}\MOp(r_v)$.
Each element of the operadic James construction $x\in\FreeOp_*(\MOp)$
has a reduced form $x^r = (x_v)_{v\in V(\ttree)}\in\FreeOp_{\ttree}(\MOp)$
such that $x_v\in\MOp(1)\setminus *$ for each $v\in V(\ttree)$
with $r_v = 1$.

The Sullivan cochain functor is defined by $\DGOmega^*(X) = \Mor_{\Simp}(X,\DGOmega^*(\Delta^{\bullet}))$,
for each simplicial set $X\in\Simp$. For a cochain $\alpha\in\DGOmega^*(X)$,
we use the notation $\alpha(x)\in\DGOmega^*(\Delta^n)$
for the form on the $n$-simplex $\Delta^n$
associated to an element $x\in X_n$.
Let $\alpha = \bigotimes_v\alpha_v\in\FreeOp^c_{\ttree}(\overline{\DGOmega^*(\MOp)})$
be an element of the Hopf cooperad $\DGOmega^*_{\sharp}(\FreeOp_*(\MOp))$
in the representation of Proposition~\ref{prop:operadic James construction model}.
We easily check that the image of such a tensor $\alpha = \bigotimes_v\alpha_v\in\FreeOp^c_{\ttree}(\overline{\DGOmega^*(\MOp)})$
under our comparison morphism $\chi: \DGOmega^*_{\sharp}(\FreeOp_*(\MOp))(r)\rightarrow\DGOmega^*(\FreeOp_*(\MOp)(r))$
is identified with the form $\chi(\bigotimes_v\alpha_v)\in\DGOmega^*(\FreeOp_*(\MOp)(r))$
such that $\chi(\bigotimes_v\alpha_v)(x) = \prod_v\alpha_v(x)$ if the element $x\in\FreeOp_*(\MOp)$
satisfies $x^r = (x_v)_{v\in V(\ttree)}\in\FreeOp_{\ttree}(\MOp)$
and $\chi(\bigotimes_v\alpha_v)(x) = 0$ otherwise.

Let $\FreeOp^{\leq m}_*(\MOp)(r)$ denote the subset of the James construction that consists of classes
of the treewise cartesian products $(x_v)_{v\in V(\ttree)}\in\FreeOp_{\ttree}(\MOp)$
such that $\sharp V(\ttree)\leq m$ in the free operad $\FreeOp(\MOp)$.
We have a cofiber sequence of simplicial sets
\begin{equation*}
\xymatrix{ \FreeOp^{\leq m-1}_*(\MOp)(r)\ar[r] & \FreeOp^{\leq m}_*(\MOp)(r)\ar[r] & \bigvee_{\sharp V(\ttree) = m}\FreeOp_{\ttree}^{\wedge}(\MOp) },
\end{equation*}
where $\FreeOp_{\ttree}^{\wedge}(\MOp)$ denotes the quotient of the tree-wise cartesian product $\FreeOp_{\ttree}(\MOp) = \times_v\MOp(r_v)$
over the subspace that consists of the collections $(x_v)_{v\in V(\ttree)}\in\FreeOp_{\ttree}(\MOp)$
such that $x_v = *$ for some $v\in V(\ttree)$
with $r_v = 1$.
Let $\FreeOp^c_{\leq m}(\overline{\DGOmega^*(\MOp)})\subset\FreeOp^c(\overline{\DGOmega^*(\MOp)})$
denote the symmetric sequence
such that $\FreeOp^c_{\leq m}(\overline{\DGOmega^*(\MOp)})(r) = \bigoplus_{\sharp V(\ttree)\leq m}\FreeOp^c_{\ttree}(\overline{\DGOmega^*(\MOp)})$.
We see that our comparison morphism arises as the limit
of a tower of comparison maps $\chi: \FreeOp^c_{\leq m}(\overline{\DGOmega^*(\MOp)})(r)\rightarrow\DGOmega^*(\FreeOp^{\leq m}_*(\MOp)(r))$
which fit in commutative diagrams of the form:
\begin{equation*}
\xymatrix{ \bigoplus_{\sharp V(\ttree) = m}\FreeOp^c_{\ttree}(\overline{\DGOmega^*(\MOp)})\ar[r]\ar@{.>}[d] &
\FreeOp^c_{\leq m}(\overline{\DGOmega^*(\MOp)})\ar[d]\ar[r] &
\FreeOp^c_{\leq m-1}(\overline{\DGOmega^*(\MOp)})\ar[d] \\
\overline{\DGOmega}^*(\bigvee_{\sharp V(\ttree) = m}\FreeOp_{\ttree}^{\wedge}(\MOp))\ar[r] &
\DGOmega^*(\FreeOp_*^{\leq m}(\MOp)(r))\ar[r] &
\DGOmega^*(\FreeOp_*^{\leq m-1}(\MOp)(r)) },
\end{equation*}
where we now use the notation $\overline{\DGOmega}^*(X)$ for the augmentation ideal of the Sullivan cochain dg-algebra
of the pointed simplicial set $X = \bigvee_{\sharp V(\ttree) = m}\FreeOp_{\ttree}^{\wedge}(\MOp)$.
We easily deduce from an appropriate version of the K\"unneth theorem that the left-hand side vertical morphism of this diagram
defines a weak-equivalence of cochain graded dg-modules.
We obtain by induction that the medium vertical map defines a weak-equivalence as well for each $m\geq 0$.

Then the assumption that the simplicial set $\MOp(1)$ is connected
implies that the map $\DGOmega^*(\FreeOp_*^{\leq m}(\MOp)(r))\rightarrow\DGOmega^*(\FreeOp_*^{\leq m-1}(\MOp)(r))$
induces a surjection in cohomology, for each fixed arity $r>0$,
when $m$ is large enough with respect to the degree. We conclude that our comparison map induces a weak-equivalence
on the limit of our tower, and this result finishes the proof of our proposition.
\end{proof}

\begin{proof}[Proof of Theorem~\ref{thm:operadic Sullivan model validity}]
We now use the same argument lines as in~\cite[\S II.10.1]{FresseBook} to complete the proof of our main theorem
in the general case of a cofibrant operad $\POp\in\Simp\Op_{\varnothing}$.
We just give a short overview of the plan of this proof. We consider the reduced cotriple resolution
of our operad $\Res_*(\POp)_{\bullet} = \FreeOp_*^{{\bullet}+1}(\POp)$.
The weak-equivalence $\epsilon: |\ROp_{\bullet}|\xrightarrow{\sim}\POp$ of Proposition~\ref{prop:reduced cotriple resolution homotopy}
induces a weak-equivalence of Hopf cochain dg-cooperads
\begin{equation*}
\DGOmega^*_{\sharp}(\POp)\xrightarrow{\sim}\DGOmega^*_{\sharp}(|\ROp_{\bullet}|),
\end{equation*}
because the result of Proposition~\ref{prop:reduced cotriple resolution homotopy} also implies that the operad $\ROp = |\ROp_{\bullet}|$ is cofibrant,
and the functor $\DGOmega^*_{\sharp}$ preserves weak-equivalences between cofibrant objects
by Quillen adjunction. We moreover have an identity
\begin{equation*}
\DGOmega^*_{\sharp}(|\ROp_{\bullet}|) = \int_{\underline{n}\in\Delta}\DGOmega^*_{\sharp}(\ROp_n\otimes\Delta^n)
\end{equation*}
by adjunction, where $\ROp_{\bullet}\otimes\Delta^{\bullet}$ denotes a cosimplicial framing of the simplicial object $\ROp_{\bullet}$
in the category of operads in simplicials sets. By Quillen adjunction, we can also identify the right-hand side
of this formula with the totalization of the cosimplicial object $\DGOmega^*_{\sharp}(\ROp_{\bullet})$
in the category of Hopf cochain dg-cooperads, and hence, in the category of cochain dg-cooperads
since the forgetful functor preserves the totalization by Proposition~\ref{prop:Hopf cooperad totalization}.

The comparison map of Theorem~\ref{thm:operadic Sullivan model validity} fits in a commutative square:
\begin{equation}\tag{$*$}\label{eq:comparison map resolution}
\xymatrix{ \DGOmega^*_{\sharp}(\POp)(r)\ar[r]^-{(1)}\ar@{.>}[d] & \DGN^*(\DGOmega^*_{\sharp}(\ROp_{\bullet})(r))\ar[d]^{(3)} \\
\DGOmega^*(\POp)(r)\ar[r]^-{(2)} & \DGN^*(\DGOmega^*(\ROp_{\bullet})(r)) },
\end{equation}
where the horizontal arrows (1-2) are induced by augmentation of the reduced cotriple resolution $\epsilon: \ROp_{\bullet}\rightarrow\POp$.
We easily check again, by using the extra-degeneracies of the reduced cotriple resolution
(as in the proof of Proposition~\ref{prop:reduced cotriple resolution homotopy}),
that the lower horizontal arrow of this diagram is a weak-equivalence.
We use the results of Theorem~\ref{thm:cooperad totalization} to establish that the upper horizontal arrow is a weak-equivalence too.
To be explicit, we check that our morphism (1) fits in the following commutative diagram of weak-equivalences
when we forget about Hopf structures and we work in the category of cochain dg-cooperads:
\begin{equation*}
\xymatrix{ \DGOmega_{\sharp}^*(\POp)\ar[r]^-{\sim}\ar[dd]_-{\sim} &
\DGOmega_{\sharp}^*(|\ROp_{\bullet}|)\ar[r]^-{\simeq} &
\int_{\underline{n}\in\Delta}\DGOmega_{\sharp}^*(\ROp_n\otimes\Delta^n)\ar[d]_{(4)}^-{\sim} \\
&& \cdot \\
\DGB\DGB^c(\DGOmega_{\sharp}^*(\POp))\ar@{.>}[rr] &&
\int_{\underline{n}\in\Delta}\DGB(\DGB^c(\DGOmega_{\sharp}^*(\ROp_n))\bar{\otimes}\DGOmega^*(\Delta^n))\ar[u]^{(5)}_-{\sim}\ar[d]_{(6)}^-{\sim} \\
&& \DGB\DGB^c(\DGN^*(\DGOmega_{\sharp}^*(\ROp_{\bullet}))) \\
\DGOmega_{\sharp}^*(\POp)\ar[uu]^{\sim}\ar@{.>}[rr]^{(1)} && \DGN^*(\DGOmega_{\sharp}^*(\ROp_{\bullet}))\ar[u]^{(7)}_-{\sim} }.
\end{equation*}
The vertical weak-equivalences on the left-hand side of this diagram are given by the unit of the bar-cobar adjunction of operads,
as well as morphism (7). The weak-equivalences (4-5) are given by a natural comparison zigzag
of totalizations, and the weak-equivalence (6) is given by the result of Theorem~\ref{thm:cooperad totalization}.

The result of Proposition~\ref{prop:operadic James construction model comparison}
implies that the vertical comparison morphism (3)
on the right-hand side of our diagram~(\ref{eq:comparison map resolution})
is a weak-equivalence too, because the cotriple resolution $\Res_*(\POp) = \FreeOp_*^{\bullet+1}(\POp)$
is given by an operadic James construction dimensionwise $\Res_*(\POp) = \FreeOp_*(\MOp)$, with $\MOp = \FreeOp_*^{\bullet}(\POp)$,
and the assumptions of our theorem implies that this generating symmetric sequence $\MOp = \FreeOp_*^{\bullet}(\POp)$
satisfies the requirements of Proposition~\ref{prop:operadic James construction model comparison}.
Then we just use the commutativity of our diagram~(\ref{eq:comparison map resolution})
to conclude that the comparison morphism $\chi: \DGOmega^*_{\sharp}(\POp)(r)\rightarrow\DGOmega^*(\POp(r))$
is a weak-equivalence as well, for each arity $r>0$. The proof of Theorem~\ref{thm:operadic Sullivan model validity} is now complete.
\end{proof}

To complete the results of this section, we record the following generalization
of an observation given in~\cite[Proposition II.10.1.4]{FresseBook}:

\begin{prop}\label{prop:operadic Sullivan model universal interpretation}
Let $\POp\in\Simp\Op_{\varnothing}$. Let $\AOp\in\dg^*\Hopf\Op_{0}^c$.
We have a natural one-to-one correspondence between the morphisms of Hopf cochain dg-cooperads
\begin{equation*}
\phi_{\sharp}: \AOp\rightarrow\DGOmega_{\sharp}^*(\POp)
\end{equation*}
and the collections of morphisms of unitary commutative cochain dg-algebras
\begin{equation*}
\phi: \AOp(r)\rightarrow\DGOmega^*(\POp(r)),\quad r>0,
\end{equation*}
which preserve the actions of symmetric groups on our objects, preserve the counit splitting of the components of our cooperads in arity one,
and make the following diagram commute
\begin{equation*}
\xymatrix{ \AOp(k+l-1)\ar[rrr]^-{\phi}\ar[d]_{\circ_{i}^{*}} &&& \DGOmega^*(\POp(k+l-1))\ar[d]_{(\circ_{i})^{*}} \\
\AOp(k)\otimes\AOp(l)\ar[rr]^-{\phi\otimes\phi} && \DGOmega^*(\POp(k))\otimes\DGOmega^*(\POp(l))\ar[r]^-{\nabla} &
\DGOmega^*(\POp(k)\times\POp(l)) }
\end{equation*}
for each $k,l>0$, for any $i\in\{1<\dots<k\}$, and where $\nabla$ refers to the natural codiagonal morphism
associated to the functor $\DGOmega^*: \Simp^{op}\rightarrow\dg^*\ComCat_+$.
\end{prop}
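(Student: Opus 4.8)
The plan is to obtain the bijection by composing two adjunctions, with no finiteness or connectedness hypothesis entering. We use the adjunction $\DGG:\dg^*\Hopf\Op_0^c\rightleftarrows\Simp\Op_{\varnothing}^{op}:\DGOmega^*_{\sharp}$ from~\S\ref{operadic Sullivan model:construction} (a genuine point-set level adjunction, by the adjoint functor lifting theorem, even though only its Quillen property is used elsewhere) together with the classical Sullivan adjunction $\DGG:\dg^*\ComCat\rightleftarrows\Simp^{op}:\DGOmega^*$ applied arity-wise.

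First I would rewrite $\Hom_{\dg^*\Hopf\Op_0^c}(\AOp,\DGOmega^*_{\sharp}(\POp))\cong\Hom_{\Simp\Op_{\varnothing}}(\POp,\DGG(\AOp))$, which is just the defining adjunction bijection (after undoing the opposite category). The core of the argument is then to unwind the right-hand side. Recall from~\S\ref{operadic Sullivan model:construction} that $\DGG(\AOp)(r)=\DGG(\AOp(r))$ and that the operad structure on $\DGG(\AOp)$ is obtained by applying the strong symmetric monoidal functor $\DGG$, via the isomorphisms $\DGG(A\otimes B)\simeq\DGG(A)\times\DGG(B)$, to the counit, the symmetric group actions, and the composition coproducts of the cooperad $\AOp$; in particular the operad unit of $\DGG(\AOp)$ is the point of $\DGG(\AOp)(1)=\DGG(\AOp(1))$ which is the image under $\DGG$ of the cooperad counit $\eta^*\colon\AOp(1)\to\QQ$ (using $\DGG(\QQ)=\pt$). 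Consequently a morphism of operads $\psi\colon\POp\to\DGG(\AOp)$ amounts to a collection of maps of simplicial sets $\psi_r\colon\POp(r)\to\DGG(\AOp(r))$, $r>0$, that are equivariant under the symmetric group actions, send the operad unit $1\in\POp(1)$ to $\DGG(\eta^*)$, and satisfy, for all $k,l>0$ and $i\in\{1<\dots<k\}$, the identity expressing that $\psi_{k+l-1}\circ\circ_i$ coincides with $\DGG(\circ_i^*)$ precomposed with the monoidal isomorphism $\DGG(\AOp(k))\times\DGG(\AOp(l))\simeq\DGG(\AOp(k)\otimes\AOp(l))$ and with $\psi_k\times\psi_l$.

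Next I would apply the Sullivan adjunction in each arity, which turns $\psi_r$ into a morphism of unitary commutative cochain dg-algebras $\phi_r\colon\AOp(r)\to\DGOmega^*(\POp(r))$, naturally in $\POp$ and $\AOp$. Under this passage: equivariance of $\psi_r$ is equivalent to equivariance of $\phi_r$; and the composition-product compatibility of $\psi$ becomes precisely the commutative diagram of the statement relating $\circ_i^*$, $\phi\otimes\phi$, the codiagonal $\nabla$, and $(\circ_i)^*$, because by the standard mate correspondence for (co)monoidal adjunctions postcomposition with $\DGG(\circ_i^*)$ transforms into precomposition with $\circ_i^*$, while the monoidal isomorphism $\DGG(\AOp(k))\times\DGG(\AOp(l))\simeq\DGG(\AOp(k)\otimes\AOp(l))$ transforms into the codiagonal $\nabla\colon\DGOmega^*(\POp(k))\otimes\DGOmega^*(\POp(l))\to\DGOmega^*(\POp(k)\times\POp(l))$. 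The subtle point, and the one genuinely new feature relative to the reduced case of~\cite[Proposition II.10.1.4]{FresseBook}, is the arity-one bookkeeping: there $\AOp(1)=\QQ$ forces $\DGG(\AOp)(1)=\DGG(\QQ)=\pt=\POp(1)$ and the unit condition is empty, whereas here $\DGG(\AOp)(1)=\DGG(\AOp(1))$ is a nontrivial simplicial monoid and $\POp(1)$ an arbitrary simplicial set. I expect this to be the main point requiring care: one must check that the condition ``$\psi_1$ sends the operad unit to $\DGG(\eta^*)$'' translates, after composing $\phi_1$ with the restriction $\DGOmega^*(\POp(1))\to\DGOmega^*(\pt)=\QQ$ along the operad unit $\pt=\IOp(1)\hookrightarrow\POp(1)$, into the statement that this composite equals $\eta^*$, i.e.\ that $\phi_1$ preserves the counit splitting of the arity-one component. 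The remaining verifications are a routine diagram chase, and naturality of the correspondence in $\POp$ and $\AOp$ is inherited from the naturality of the two adjunction bijections.
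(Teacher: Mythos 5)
Your proposal is correct and follows essentially the same route as the paper: first pass through the operadic adjunction to convert $\phi_{\sharp}$ into an operad morphism $\POp\rightarrow\DGG(\AOp)$, then apply the arity-wise Sullivan adjunction and check that preservation of the operadic structure translates into equivariance, the counit-splitting condition in arity one, and the stated coproduct diagram. Your extra attention to the arity-one unit condition is exactly the point where this extends the reduced case of \cite[Proposition II.10.1.4]{FresseBook}, which the paper leaves implicit.
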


\begin{proof}
We use the same correspondence as in~\cite[Proposition II.10.1.4]{FresseBook}.
In a first step, we use the adjunction relation $\DGG: \dg^*\Hopf\Op_0^c\rightleftarrows\Simp\Op_{\varnothing}^{op} :\DGOmega^*_{\sharp}$
in order to associate a morphism of operads in simplicial sets $\phi_{\flat}: \POp\rightarrow\DGG(\AOp)$
to the morphism of Hopf cochain dg-cooperads $\phi_{\sharp}: \AOp\rightarrow\DGOmega_{\sharp}^*(\POp)$.
Then we use the adjunction relation $\DGG: \dg^*\ComCat\rightleftarrows\Simp^{op} :\DGOmega^*$
to associate morphisms of commutative cochain dg-algebras $\phi: \AOp(r)\rightarrow\DGOmega^*(\POp(r))$
to the morphisms of simplicial sets $\phi_{\flat}: \POp(r)\rightarrow\DGG(\AOp(r))$
which underlie this operad morphism $\phi_{\flat}: \POp\rightarrow\DGG(\AOp)$.
We easily check that the preservation of the operadic structure operations for the morphisms of simplicial sets $\phi_{\flat}: \POp(r)\rightarrow\DGG(\AOp(r))$
is equivalent to the requirements of the theorem for the morphisms
of commutative cochain dg-algebras $\phi: \AOp(r)\rightarrow\DGOmega^*(\POp(r))$.
\end{proof}

\section{The rational homotopy theory of operads}\label{sec:operad rational homotopy}
We explain the consequences of the results of the previous section for the study of the rational homotopy of operads in this section.
We actually get the same results as in~\cite[\S II.10.2]{FresseBook} in the context of operads
that reduce to a one-point set in arity one.
We therefore only give a brief overview of the main statements.

Recall that the Sullivan rationalization of a simplicial set $X\in\Simp$
is defined by $X^{\QQ} = \DGL\DGG(\DGOmega^*(X))$,
where $\DGL\DGG$ denotes the left derived functor of the functor $\DGG: \dg^*\ComCat\rightarrow\Simp^{op}$
on the category of commutative cochain dg-algebras $\dg^*\ComCat$.
Let $\POp$ be a cofibrant object in the category of operads in simplicial sets $\Simp\Op_{\varnothing}$.
We set $\POp^{\QQ} = \DGL\DGG(\DGOmega^*_{\sharp}(\POp))$,
where $\DGL\DGG$ now denotes the left derived functor of the functor $\DGG: \dg^*\Hopf\Op_0^c\rightarrow\Simp\Op_{\varnothing}^{op}$
on the category of Hopf cochain dg-cooperads $\dg^*\Hopf\Op_0^c$.
We explicitly have $\POp^{\QQ} = \DGG(\AOp)$, where $\AOp\xrightarrow{\sim}\DGOmega^*_{\sharp}(\POp)$
is any cofibrant resolution of the object $\DGOmega^*_{\sharp}(\POp)$
in our model category of Hopf cochain dg-cooperads $\dg^*\Hopf\Op_0^c$.
We have the following result:

\begin{thm}\label{thm:operad rationalization}
We assume that the operad $\POp$ consists of connected simplicial sets $\POp(r)$
whose homology with rational coefficients $\DGH_*(\POp(r)) = \DGH_*(\POp(r),\QQ)$
form modules of finite dimension over the ground field $\QQ$
in each degree.
We then have a weak-equivalence in the category of simplicial sets $\POp^{\QQ}(r)\sim\POp(r)^{\QQ}$
in each arity $r>0$, where $\POp(r)^{\QQ}$
denotes the Sullivan rationalization of the simplicial set $X = \POp(r)$.
\end{thm}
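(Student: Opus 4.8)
The plan is to reduce the statement, arity by arity, to the classical Sullivan comparison, using that both $\DGG$ and $\DGOmega^*_{\sharp}$ are built from the ordinary Sullivan functors by an arity-wise construction. First I would fix a cofibrant resolution $\AOp\xrightarrow{\sim}\DGOmega^*_{\sharp}(\POp)$ in the model category $\dg^*\Hopf\Op_0^c$, so that by the very definition of the operadic rationalization we have $\POp^{\QQ} = \DGG(\AOp)$, and hence $\POp^{\QQ}(r) = \DGG(\AOp)(r) = \DGG(\AOp(r))$, since the functor $\DGG: \dg^*\Hopf\Op_0^c\rightarrow\Simp\Op_{\varnothing}^{op}$ is obtained by applying the left adjoint of the Sullivan functor $\DGG: \dg^*\ComCat\rightarrow\Simp^{op}$ in each arity.

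The second step is to recognize $\AOp(r)$, for each $r>0$, as a cofibrant resolution of $\DGOmega^*(\POp(r))$ in the category of commutative cochain dg-algebras $\dg^*\ComCat$. For cofibrancy, I would appeal to Proposition~\ref{prop:Hopf cooperad cofibrations}: since $\AOp$ is cofibrant, the morphism to $\AOp$ from the initial Hopf cochain dg-cooperad is a cofibration of Hopf cochain dg-cooperads, and the proposition turns this into a cofibration of commutative cochain dg-algebras $\QQ\rightarrowtail\AOp(r)$ in every arity $r>0$, where $\QQ$ is the initial commutative cochain dg-algebra; thus $\AOp(r)$ is cofibrant. For the weak-equivalence, I would compose the arity-$r$ component of the resolution $\AOp\xrightarrow{\sim}\DGOmega^*_{\sharp}(\POp)$ — a quasi-isomorphism of commutative cochain dg-algebras by the definition of the weak-equivalences of $\dg^*\Hopf\Op_0^c$ — with the comparison morphism $\chi: \DGOmega^*_{\sharp}(\POp)(r)\xrightarrow{\sim}\DGOmega^*(\POp(r))$, which is a weak-equivalence under the connectedness and finiteness hypotheses of the theorem by Theorem~\ref{thm:operadic Sullivan model validity} (the present assumptions are even slightly stronger, as we ask that $\POp(r)$ be connected for every $r$). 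This produces a quasi-isomorphism $\AOp(r)\xrightarrow{\sim}\DGOmega^*(\POp(r))$ whose source is cofibrant.

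To conclude, I would use that $\DGG: \dg^*\ComCat\rightarrow\Simp^{op}$ is a left Quillen functor and therefore carries weak-equivalences between cofibrant objects to weak-equivalences (Ken Brown's lemma). Choosing a cofibrant replacement $Q\xrightarrow{\sim}\DGOmega^*(\POp(r))$ and lifting $\AOp(r)\xrightarrow{\sim}\DGOmega^*(\POp(r))$ through it, one obtains a weak-equivalence $\DGG(\AOp(r))\sim\DGG(Q) = \DGL\DGG(\DGOmega^*(\POp(r))) = \POp(r)^{\QQ}$; combined with the identity $\POp^{\QQ}(r) = \DGG(\AOp(r))$ from the first step, this is the asserted weak-equivalence $\POp^{\QQ}(r)\sim\POp(r)^{\QQ}$ for each $r>0$. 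Since the substantial content — the arity-wise comparison $\chi$ — was already settled in Theorem~\ref{thm:operadic Sullivan model validity}, the only genuinely new point is the descent of cofibrancy from Hopf cochain dg-cooperads to arity-wise commutative cochain dg-algebras supplied by Proposition~\ref{prop:Hopf cooperad cofibrations}, and I expect no real obstacle in this argument.
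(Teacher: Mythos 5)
Your proposal is correct and follows essentially the same route as the paper: it uses Proposition~\ref{prop:Hopf cooperad cofibrations} to get that a cofibrant Hopf cochain dg-cooperad is arity-wise a cofibrant commutative cochain dg-algebra (so that $\DGL\DGG$ of the cooperad computes, arity by arity, the Sullivan derived functor of its components), and then feeds the arity-wise comparison weak-equivalence $\DGOmega^*_{\sharp}(\POp)(r)\sim\DGOmega^*(\POp(r))$ of Theorem~\ref{thm:operadic Sullivan model validity} through the left Quillen functor $\DGG$ via Ken Brown's lemma. This matches the paper's proof, which you merely spell out in slightly more explicit detail.
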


\begin{proof}
We use that the collection of commutative cochain dg-algebras which underlie a cofibrant Hopf cochain dg-cooperad consist of cofibrant objects
in the category of commutative cochain dg-algebras (see~\ref{prop:Hopf cooperad cofibrations}).
We deduce from this result that we have a weak-equivalence $\DGL\DGG(\AOp)(r)\sim\DGL\DGG(\AOp(r))$
in the category of commutative cochain dg-algebras in each arity $r>0$,
for any Hopf cochain dg-cooperad $\AOp$,
where we consider the component of arity $r$ of the operad $\DGL\DGG(\AOp)\in\Simp\Op_{\varnothing}$
on the left-hand side, and the simplicial set $\DGL\DGG(\AOp(r))\in\Simp$
associated to commutative cochain dg-algebra $\AOp(r)$
on the right-hand side.
We have on the other hand $\DGL\DGG(\DGOmega^*_{\sharp}(\POp)(r))\sim\DGL\DGG(\DGOmega^*(\POp(r)))$,
since the weak-equivalence $\DGOmega^*_{\sharp}(\POp)(r)\sim\DGOmega^*(\POp(r))$
given by the result of Theorem~\ref{thm:operadic Sullivan model validity}
induces a weak-equivalence of simplicial sets when we apply our derived functor $\DGL\DGG$
on the category of commutative cochain dg-algebra. The conclusion follows.
\end{proof}

For our purpose, we also record the following immediate follow-up of our constructions:

\begin{thm}\label{thm:rational operad mapping spaces}
Let $\POp,\QOp\in\Simp\Op_{\varnothing}$. We have a weak-equivalence of simplicial sets:
\begin{equation*}
\Map^h(\POp,\QOp^{\QQ})\sim\Map^h(\DGR\DGOmega^*_{\sharp}(\QOp),\DGR\DGOmega^*_{\sharp}(\POp)),
\end{equation*}
where, on the left-hand side, we consider the derived mapping space $\Map^h(-,-)$ associated to the objects $\POp,\QOp^{\QQ}\in\Simp\Op_{\varnothing}$
in the model category of operads in simplicial sets, and on the right-hand side, we consider the derived functor $\DGR\DGOmega^*_{\sharp}(-)$
of our operadic upgrading of the Sullivan cochain dg-algebra functor $\DGOmega^*_{\sharp}(-)$
and the derived mapping space associated to the objects $\DGR\DGOmega^*_{\sharp}(\POp),\DGR\DGOmega^*_{\sharp}(\QOp)\in\dg^*\Hopf\Op_0^c$
in the model category of Hopf cochain dg-cooperads.
\end{thm}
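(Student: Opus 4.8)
The plan is to derive this statement from the standard formalism of homotopy mapping spaces attached to a Quillen adjunction, applied to the pair $\DGG: \dg^*\Hopf\Op_0^c\rightleftarrows\Simp\Op_{\varnothing}^{op}:\DGOmega^*_{\sharp}$ of Proposition~\ref{prop:operadic Sullivan model adjunction}. No connectedness or finiteness hypothesis enters, and the argument does not rely on Theorem~\ref{thm:operadic Sullivan model validity}; the claim is purely formal, reflecting the adjunction relation at the level of derived functors.

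First I would reduce to the case where $\POp$ and $\QOp$ are cofibrant objects of $\Simp\Op_{\varnothing}$, which is legitimate because every construction occurring in the statement --- the derived mapping space $\Map^h(-,-)$, the derived functor $\DGR\DGOmega^*_{\sharp}(-)$, and the rationalization $\QOp\mapsto\QOp^{\QQ} = \DGL\DGG(\DGOmega^*_{\sharp}(\QOp))$ --- depends only on the weak-equivalence type of its operad arguments. For cofibrant $\POp,\QOp$ we then have $\DGR\DGOmega^*_{\sharp}(\POp) = \DGOmega^*_{\sharp}(\POp)$, $\DGR\DGOmega^*_{\sharp}(\QOp) = \DGOmega^*_{\sharp}(\QOp)$, and $\QOp^{\QQ} = \DGG(\AOp)$, where $\AOp\xrightarrow{\sim}\DGOmega^*_{\sharp}(\QOp)$ denotes a cofibrant resolution of $\DGOmega^*_{\sharp}(\QOp)$ in $\dg^*\Hopf\Op_0^c$.

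Then I would apply the general identity $\Map^h(LX,Y)\sim\Map^h(X,RY)$, valid for any Quillen adjunction $L\dashv R$ with $X$ cofibrant in the source and $Y$ fibrant in the target. Taking $L = \DGG$, $R = \DGOmega^*_{\sharp}$, $X = \AOp$ (cofibrant in $\dg^*\Hopf\Op_0^c$), and $Y = \POp$ --- a fibrant object of $\Simp\Op_{\varnothing}^{op}$ exactly because $\POp$ is cofibrant in $\Simp\Op_{\varnothing}$ --- this produces a weak-equivalence
\[
\Map^h_{\Simp\Op_{\varnothing}^{op}}(\DGG(\AOp),\POp)\sim\Map^h_{\dg^*\Hopf\Op_0^c}(\AOp,\DGOmega^*_{\sharp}(\POp)).
\]
On the left, the mapping space in the opposite category is the homotopy mapping space $\Map^h_{\Simp\Op_{\varnothing}}(\POp,\DGG(\AOp)) = \Map^h(\POp,\QOp^{\QQ})$ computed in $\Simp\Op_{\varnothing}$ itself. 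On the right, $\DGOmega^*_{\sharp}(\POp)$ is fibrant in $\dg^*\Hopf\Op_0^c$, being the image of a fibrant object under the right Quillen functor $\DGOmega^*_{\sharp}$, so the displayed complex already computes a derived mapping space; replacing the cofibrant object $\AOp$ by the weakly equivalent $\DGOmega^*_{\sharp}(\QOp) = \DGR\DGOmega^*_{\sharp}(\QOp)$ and recalling $\DGOmega^*_{\sharp}(\POp) = \DGR\DGOmega^*_{\sharp}(\POp)$ rewrites it as $\Map^h(\DGR\DGOmega^*_{\sharp}(\QOp),\DGR\DGOmega^*_{\sharp}(\POp))$. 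Chaining these identifications gives the assertion.

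The one point needing care is the passage to the opposite category: one must keep track that $\DGG$ is the \emph{left} adjoint of the Quillen pair of Proposition~\ref{prop:operadic Sullivan model adjunction}, that the fibrant objects of $\Simp\Op_{\varnothing}^{op}$ are the operads cofibrant in $\Simp\Op_{\varnothing}$ (the initial object of $\Simp\Op_{\varnothing}$ being the operad $\IOp$), and that $\DGG(\AOp) = \QOp^{\QQ}$ under the conventions fixed in the first step. With this dictionary settled, the proof is a direct application of the derived-adjunction formalism.
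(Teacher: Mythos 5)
Your proof is correct and follows exactly the paper's route: the paper disposes of this statement in one line as an immediate consequence of the Quillen adjunction $\DGG: \dg^*\Hopf\Op_0^c\rightleftarrows\Simp\Op_{\varnothing}^{op} :\DGOmega^*_{\sharp}$ of Proposition~\ref{prop:operadic Sullivan model adjunction}, and your argument simply spells out the standard derived-adjunction bookkeeping (cofibrant replacement of $\DGOmega^*_{\sharp}(\QOp)$, fibrancy in the opposite category, invariance of $\Map^h$) that this one-liner leaves implicit.
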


\begin{proof}
This theorem is an immediate consequence
of the Quillen adjunction relation $\DGG: \dg^*\Hopf\Op_0^c\rightleftarrows\Simp\Op_{\varnothing}^{op} :\DGOmega^*_{\sharp}$.
\end{proof}

\section{The extension to unitary operads}\label{sec:unitary operad rational homotopy}
Recall that we use the notation $\Lambda$ for the category which has the finite ordinals $\rset = \{1<\dots,r\}$
and all injective maps (not necessarily monotoneous) between such ordinals as morphisms.
In what follows, we also consider the full subcategory $\Lambda_{>0}\subset\Lambda$
generated by the ordinals $\rset = \{1<\dots,r\}$
such that $r>0$.

Recall also that the structure of a $\Lambda$-operad in simplicial sets consists of an operad $\POp$
equipped with restriction operators $u^*: \POp(l)\rightarrow\POp(k)$,
associated to the injective maps $u: \{1<\dots<k\}\rightarrow\{1<\dots<l\}$,
so that the action of the symmetric groups on the objects $\POp(r)$, $r>0$,
extends to an action of the category $\Lambda_{>0}$
on our collection.
We assume that the composition products of the operad $\POp$ satisfy natural equivariance relations,
which extend the usual equivariance axioms of operads,
with respect to the action of restriction operators. These equivariance relations involve, in degenerate cases,
the action of an augmentation on $\POp(r)$,
which is just the constant map to the one-point set in our context $\epsilon: \POp(r)\rightarrow\pt$.
Thus, we do not specify these augmentation maps in our terminology but more care would be necessary when we consider $\Lambda$-operads
in other categories than the category of simplicial sets (or a cartesian category).
We use the notation $\Simp\Lambda\Op_{\varnothing}$ for the category of $\Lambda$-operads in simplicial sets.

In~\cite[pp. xix-xxii]{FresseBook}, we explain that the structure of a $\Lambda$-operad $\POp$
is equivalent to the structure of an (ordinary) symmetric operad $\POp_+$
such that $\POp_+(0) = *$ and $\POp_+(r) = \POp(r)$ for $r>0$.
The restriction operators $u^*: \POp(l)\rightarrow\POp(k)$ correspond to composites with the arity zero element $*\in\POp_+(0)$
at positions $j\not\in\{u(1),\dots,u(k)\}$ in the operad $\POp_+$.
To be explicit, if we use an expression in terms of variables, then for an element $p = p(x_1,\dots,x_l)\in\POp(l)$,
we have the identity $u^* p = p(y_1,\dots,y_l)$ with $y_j = x_{u^{-1}(j)}$ when $j\in\{u(1),\dots,u(k)\}$
and $y_j = *$ otherwise.
The augmentation map $\epsilon: \POp(r)\rightarrow\pt$ (which is just trivial in our context)
can be identified with the composition operation $\epsilon(p) = p(*,\dots,*)$
where we put the arity zero element $*\in\POp_+(0)$
at all positions.
This correspondence implies that we have an isomorphism of categories between the category of $\Lambda$-operads $\Simp\Lambda\Op_{\varnothing}$
and the category $\Op_*$, whose objects are the ordinary symmetric operads
such that $\POp_+(0) = *$.

In~\cite[\S II.8.4]{FresseBook}, we also explain that the category of $\Lambda$-operads $\Simp\Lambda\Op_{\varnothing}$
inherits a model structure defined by transporting the Reedy model structure
on the category of contravariant $\Lambda_{>0}$-diagrams
to the category of $\Lambda$-operads. We aim to extend the definition of the Quillen adjunction
of~\S\ref{sec:operadic Sullivan model}
to this model category. In a preliminary step, we explain the definition of the counterpart
of $\Lambda$-structures on the category of cochain dg-cooperads
and on the category of Hopf cochain dg-cooperads.
In fact, we use a straightforward extension of the definitions of the book~\cite{FresseBook}
by using the extension of the notion of a cooperad which we consider in~\S\ref{cooperads:general definition}.
Therefore, we only review the main features of these definitions, and we refer to~\cite[\S\S II.11-12]{FresseBook}
for more details on the relations which we use in our constructions.
In passing, we also review the definition of model structures on these extensions of the category of cochain dg-cooperads
and of the category of Hopf cochain dg-cooperads.

\subsubsection{The model category of cochain dg-$\Lambda$-cooperads}\label{Lambda-cooperads:model structure}
For our purpose, a cochain dg-$\Lambda$-cooperad consists of a (conilpotent) cooperad in cochain graded dg-modules $\COp$,
in the sense of~\S\ref{cooperads:general definition},
together with corestriction operators $u_*: \COp(k)\rightarrow\COp(l)$,
associated to the injective maps $u: \{1<\dots<k\}\rightarrow\{1<\dots<l\}$
and which gives a covariant action of the category $\Lambda_{>0}$
on the collection $\COp(r)$, as well as coaugmentation morphisms $\epsilon^*: \QQ\rightarrow\COp(r)$,
defined for all $r\in\NN$. We assume that these structure operations satisfy an obvious extension of the requirements
of~\cite[\S II.11.1.1]{FresseBook}.
To be more precise, in our context where we do not necessarily assume that $\COp(1)$
reduces to the ground field $\QQ$,
we just make the additional requirement that the component of the coaugmentation of arity one $\epsilon^*: \QQ\rightarrow\COp(1)$
is equal to the section of the counit $\eta^*: \COp(1)\rightarrow\QQ$
given with the conilpotent structure
of our cooperad $\COp$. In~\cite[Proposition II.11.1.4]{FresseBook}, we observe that the coaugmentation morphisms $\epsilon^*: \QQ\rightarrow\COp(r)$
are identified with a morphism of cooperads $\epsilon^*: \ComOp^c\rightarrow\COp$,
where $\ComOp^c$ denotes the dual cooperad of the operad of commutative algebras, which is given by the ground field $\ComOp^c(r) = \QQ$
in each arity $r>0$. Thus, every cochain dg-$\Lambda$-cooperad is naturally coaugmented over the commutative cooperad $\ComOp^c$.
In what follows, we use the notation $\ComOp^c/\dg^*\Lambda\Op_0^c$ for this category of (coaugmented) cochain dg-$\Lambda$-cooperads
defined in this paragraph.
In~\cite[\S II.11]{FresseBook},
we use the full expression `coaugmented $\Lambda$-cooperad' for the objects of our category of $\Lambda$-cooperads,
where the adjective `coaugmented' refers to this part of the structure of a $\Lambda$-cooperad
which is given by this coaugmentations over the unit object $\ComOp^c(r) = \QQ$.
To simplify our terminology, we prefer to drop this adjective from the name
given to the category of $\Lambda$-cooperads in this paper, and therefore we just call cochain dg-$\Lambda$-cooperads
the objects of the category of $\Lambda$-cooperads
defined in this paragraph $\ComOp^c/\dg^*\Lambda\Op_0^c$.
Let us mention that we use an analogue of this category of $\Lambda$-cooperads
in the category of general (unbounded) dg-modules
as an auxiliary category in the paper~\cite{FresseTurchinWillwacher}.

We have an obvious forgetful functor $\omega: \ComOp^c/\dg^*\Lambda\Op_0^c\rightarrow\ComOp^c/\dg^*\Op_0^c$,
where we consider the category of coaugmented objects $\ComOp^c/\dg^*\Op_0^c$
in the category of ordinary cochain dg-cooperads $\dg^*\Op_0^c$.
This functor admits a left adjoint $\ComOp^c/\Lambda\otimes_{\Sigma}-: \ComOp^c/\dg^*\Op_0^c\rightarrow\ComOp^c/\dg^*\Lambda\Op_0^c$
which is given by a straightforward extension of the construction of~\cite[\S II.11.2]{FresseBook}.
Let us simply mention that the expression $\Lambda\otimes_{\Sigma}-$ in the notation of this left adjoint functor refers to the fact
that we use a Kan extension construction to extend the action of the symmetric groups
on a coaugmented ordinary cooperads $\COp\in\ComOp^c/\dg^*\Op_0^c$
to an action of the category $\Lambda$.
We use this adjunction $\ComOp^c/\Lambda\otimes_{\Sigma}-: \ComOp^c/\dg^*\Op_0^c\rightleftarrows\ComOp^c/\dg^*\Lambda\Op_0^c :\omega$
to transport the model structure of the undercategory of cochain dg-cooperads $\ComOp^c/\dg^*\Op_0^c$
to the category of cochain dg-$\Lambda$-cooperads.
We explicitly define the class of weak-equivalences (respectively, the class of fibrations)
in the category of cochain dg-$\Lambda$-cooperads as the class of morphisms
which form a weak-equivalence (respectively, a fibration)
in the category of cochain dg-cooperads. (In particular, we still get that a morphism of cochain dg-$\Lambda$-cooperads
is a weak-equivalence $\phi: \COp\xrightarrow{\sim}\DOp$ if this morphism induces an isomorphism in cohomology.)
We define the class of cofibrations of cochain dg-$\Lambda$-cooperads as the class of morphisms
which have the left lifting property with respect to our class of acyclic fibrations.
This model category is cofibrantly generated
with the morphisms $\ComOp^c/\Lambda\otimes_{\Sigma}i: \ComOp^c/\Lambda\otimes_{\Sigma}\COp\rightarrow\ComOp^c/\Lambda\otimes_{\Sigma}\DOp$
associated to the generating (acyclic) cofibrations
of the undercategory of cochain dg-cooperads $i: \COp\rightarrow\DOp$
as a set of generating (acyclic) cofibrations. The proof of the validity of the definition of this cofibrantly generated model structure
on our category of cochain dg-$\Lambda$-cooperads $\dg^*\Lambda\Op_0^c$ follows from an immediate extension of the verifications
of~\cite[\S II.11.3]{FresseBook} in the context of the subcategory of cochain dg-$\Lambda$-cooperads
whose term of arity one reduces to the ground field.

\subsubsection{The model category of Hopf cochain dg-$\Lambda$-cooperads}\label{Hopf Lambda-cooperads:model structure}
We define our category of Hopf cochain dg-$\Lambda$-cooperads by an obvious extension,
in the category of commutative cochain dg-coalgebras,
of our notion of a cochain dg-$\Lambda$-cooperad. Thus, we assume that a Hopf cochain dg-$\Lambda$-cooperad $\AOp$
is a Hopf cochain dg-cooperad in the sense of~\S\ref{Hopf cooperads:model structure}
equipped with corestriction operators $u_*: \AOp(k)\rightarrow\AOp(l)$
and coaugmentation morphisms $\epsilon^*: \QQ\rightarrow\AOp(r)$,
defined in the category of commutative cochain dg-coalgebras,
and which satisfy the structure relations of the definition of a $\Lambda$-cooperad
in this category. Note that, in this context, the coaugmentation morphisms $\epsilon^*: \QQ\rightarrow\AOp(r)$
are necessarily identified with the unit morphisms of the commutative cochain dg-algebras $\AOp(r)$
which define our Hopf cooperad $\AOp$. Therefore, we usually omit to specify these structure
operations in our definitions. We use the notation $\dg^*\Hopf\Lambda\Op_0^c$
for this category of Hopf cochain dg-$\Lambda$-cooperads which we use in this paper. Note again that, in comparison with the definition
of~\cite[\S II.11.4.1]{FresseBook}, we just do not necessarily assume that the term of arity one
of our object reduces to the unit algebra $\QQ$. We also use an analogue of this category of Hopf $\Lambda$-cooperads
in the category of general (unbounded) dg-modules
in the paper~\cite{FresseTurchinWillwacher}.

We have a commutative square of forgetful and adjoint functors that relate the category of Hopf cochain dg-$\Lambda$-cooperads $\dg^*\Hopf\Lambda\Op_0^c$
to the category of ordinary Hopf cochain dg-cooperads $\dg^*\Hopf\Op_0^c$
and to the category of cochain dg-$\Lambda$-cooperads $\ComCat^c/\dg^*\Lambda\Op_0^c$:
\begin{equation}\tag{$*$}\label{eq:Hopf Lambda cooperad adjunctions}
\xymatrix{ \ComCat^c/\dg^*\Lambda\Op_0^c\ar@<+2pt>@{.>}[rr]^{\ComOp^c/\Sym(-)}\ar@<-2pt>[dd] &&
\dg^*\Hopf\Lambda\Op_0^c\ar@<+2pt>[ll]\ar@<-2pt>[dd] \\
&& \\
\ComCat^c/\dg^*\Op_0^c\ar@<+2pt>@{.>}[rr]^{\ComOp^c/\Sym(-)}\ar@<-2pt>@{.>}[uu]_{\ComOp^c/\Lambda\otimes_{\Sigma}-} &&
\dg^*\Hopf\Op_0^c\ar@<+2pt>[ll]\ar@<-2pt>@{.>}[uu]_{\ComOp^c/\Lambda\otimes_{\Sigma}-} }.
\end{equation}
We use solid arrows to materialize the obvious forgetful functors of this diagram
and dotted arrows to depict the corresponding adjoint functors.
We already explained that the functor $\ComOp^c/\Lambda\otimes_{\Sigma}-: \ComCat^c/\dg^*\Op_0^c\rightarrow\ComCat^c/\dg^*\Lambda\Op_0^c$,
left adjoint to the forgetful functor from $\Lambda$-cooperads to coaugmented cooperads,
is given by a Kan extension process. We use the same categorical construction, within the category of Hopf symmetric sequences,
to define the left adjoint of the forgetful functor from Hopf $\Lambda$-cooperads
to Hopf cooperads $\ComOp^c/\Lambda\otimes_{\Sigma}-: \dg^*\Hopf\Op_0^c\rightarrow\ComCat^c/\dg^*\Hopf\Lambda\Op_0^c$.
The functors $\ComOp^c/\Sym(-)$ are given by a relative analogue
of the operadic symmetric algebra functor $\Sym_{\Op_0^c}(-): \dg^*\Op_0^c\rightarrow\dg^*\Hopf\Op_0^c$
of~\S\ref{Hopf cooperads:model structure}.
To be explicit, to a coaugmented cooperad $\COp\in\ComOp^c/\dg^*\Op_0^c$,
we associate the Hopf cooperad such that $\ComOp^c/\Sym(\COp)(r) = \Sym(\COp(r))\otimes_{\Sym(\QQ)}\QQ$,
for each arity $r>0$,
where we again use a relative tensor product to identify the image of the morphism $\Sym(\epsilon^*): \Sym(\QQ)\rightarrow\Sym(\COp(r))$
induced by the coaugmentation of our object $\QQ = \ComOp^c(r)\rightarrow\COp(r)$
with the unit of the symmetric algebra. We just check that this Hopf cooperad $\ComOp^c/\Sym(\COp)(r) = \Sym(\COp(r))\otimes_{\Sym(\QQ)}\QQ$
inherits a natural Hopf $\Lambda$-cooperad structure when we assume $\COp\in\ComOp^c/\dg^*\Lambda\Op_0^c$
(see~\cite[Proposition II.11.4.5]{FresseBook} for an analogue of this construction in the context of cooperads
whose term of arity one reduces to the unit object).

We use the adjunction $\ComOp^c/\Lambda\otimes_{\Sigma}-: \dg^*\Hopf\Op_0^c\rightleftarrows\dg^*\Hopf\Lambda\Op_0^c :\omega$
to transport the model structure of the category of Hopf cochain dg-cooperads
to the category of Hopf cochain dg-$\Lambda$-cooperads.
We explicitly define the class of weak-equivalences (respectively, the class of fibrations)
in the category of Hopf cochain dg-$\Lambda$-cooperads as the class of morphisms
which form a weak-equivalence (respectively, a fibration)
in the category of Hopf cochain dg-cooperads. (In particular, we still get that a morphism of Hopf cochain dg-$\Lambda$-cooperads
is a weak-equivalence $\phi: \AOp\xrightarrow{\sim}\BOp$ if this morphism induces an isomorphism in cohomology.)
We define the class of cofibrations of Hopf cochain dg-$\Lambda$-cooperads as the class of morphisms
which have the left lifting property with respect to the class of acyclic fibrations
determined by this definition.
This model category is cofibrantly generated again
with the morphism $\ComOp^c/\Lambda\otimes_{\Sigma}\phi: \ComOp^c/\Lambda\otimes_{\Sigma}\KOp\rightarrow\ComOp^c/\Lambda\otimes_{\Sigma}\LOp$
associated to the generating (acyclic) cofibrations
of the category of Hopf cochain dg-$\Lambda$-cooperads $\phi: \KOp\rightarrow\LOp$
as class of generating (acyclic) cofibrations. The proof of the validity of the definition of this cofibrantly generated model structure
on our category of Hopf cochain dg-$\Lambda$-cooperads $\dg^*\Hopf\Lambda\Op_0^c$ follows again from an immediate extension
of the verifications of~\cite[\S II.11.4]{FresseBook}.
We can also check that all adjunctions in our commutative square (\ref{eq:Hopf Lambda cooperad adjunctions})
are Quillen adjunctions.

\medskip
We readily check that the arity-wise image $\DGG(\AOp)(r) = \Mor_{\dg^*\ComCat}(\AOp(r),\DGOmega^*(\Delta^{\bullet}))$
of a Hopf cochain dg-$\Lambda$-cooperad $\AOp\in\dg^*\Hopf\Lambda\Op_0^c$
under the left adjoint of the Sullivan cochain dg-algebra functor $\DGG: \dg^*\ComCat\rightarrow\Simp^{op}$
inherits a natural $\Lambda$-operad structure.
We accordingly get that the functor $\DGG: \dg^*\Hopf\Op_0^c\rightarrow\Simp\Op_{\varnothing}^{op}$
of~\S\ref{sec:operadic Sullivan model}
admits an extension to the category of $\Lambda$-operads:
\begin{equation*}
\DGG: \dg^*\Hopf\Lambda\Op_0^c\rightarrow\Simp\Lambda\Op_{\varnothing}^{op}.
\end{equation*}
We also have the following statement:

\begin{thm}\label{thm:Lambda-operad Sullivan model}
The functor $\DGOmega^*_{\sharp}: \POp\mapsto\DGOmega^*_{\sharp}(\POp)$ of~\S\ref{operadic Sullivan model:construction}
lifts to a functor:
\begin{equation*}
\DGOmega^*_{\sharp}: \Simp\Lambda\Op_{\varnothing}^{op}\rightarrow\dg^*\Hopf\Lambda\Op_0^c
\end{equation*}
adjoint to the functor $\DGG: \dg^*\Hopf\Lambda\Op_0^c\rightarrow\Simp\Lambda\Op_{\varnothing}^{op}$
from the category of Hopf cochain dg-$\Lambda$-cooperads $\dg^*\Hopf\Lambda\Op_0^c$
to the category of $\Lambda$-operads $\Simp\Lambda\Op_{\varnothing}$.
\end{thm}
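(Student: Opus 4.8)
The plan is to imitate, in the categories of $\Lambda$-operads and Hopf cochain dg-$\Lambda$-cooperads, the construction of the right adjoint $\DGOmega^*_{\sharp}$ carried out for ordinary operads in~\S\ref{operadic Sullivan model:construction}. We already know that $\DGG$ lifts to a functor $\DGG: \dg^*\Hopf\Lambda\Op_0^c\rightarrow\Simp\Lambda\Op_{\varnothing}^{op}$, as recorded just before the statement, and that it commutes with the forgetful functors of~\S\ref{sec:unitary operad rational homotopy} towards the category of Hopf cochain dg-cooperads $\dg^*\Hopf\Op_0^c$ and towards the category of operads in simplicial sets $\Simp\Op_{\varnothing}$. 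We therefore only have to produce a right adjoint of this extended functor $\DGG$, and, as in~\S\ref{operadic Sullivan model:construction}, we deduce the existence of this right adjoint from the adjoint functor lifting theorem. The point that makes the lifting work is that the forgetful functors above amount to restricting a covariant (respectively contravariant) action of the category $\Lambda_{>0}$ to an action of $\Sigma_{>0}$ along the identity-on-objects embedding $\Sigma_{>0}\hookrightarrow\Lambda_{>0}$; they thus admit right adjoints, given by a right Kan extension process which one checks respects the cooperad structure just as for the left adjoints $\Lambda\otimes_{\Sigma}(-)$ and $\ComOp^c/\Lambda\otimes_{\Sigma}(-)$ of~\S\ref{sec:unitary operad rational homotopy}. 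Applying the adjoint lifting theorem to the commutative square formed by these forgetful functors and the two versions of $\DGG$, and to the right adjoint $\DGOmega^*_{\sharp}$ of Proposition~\ref{prop:operadic Sullivan model adjunction} on the non-$\Lambda$ side, then yields the desired right adjoint $\DGOmega^*_{\sharp}: \Simp\Lambda\Op_{\varnothing}^{op}\rightarrow\dg^*\Hopf\Lambda\Op_0^c$.

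As in~\S\ref{operadic Sullivan model:construction}, I would also give the explicit form of this functor. For a free $\Lambda$-operad $\FreeOp_{\Lambda}(\MOp)$ generated by a symmetric sequence in simplicial sets $\MOp\in\Simp\Seq_{>0}$ --- the image of $\MOp$ under the left adjoint of the forgetful functor from $\Lambda$-operads to symmetric sequences --- one combines the free-forgetful adjunctions, the isomorphism $\DGG(A\otimes B)\simeq\DGG(A)\otimes\DGG(B)$, and the arity-wise Sullivan adjunction to check that the functor $\AOp\mapsto\Mor_{\Simp\Lambda\Op_{\varnothing}}(\FreeOp_{\Lambda}(\MOp),\DGG(\AOp))$ on Hopf cochain dg-$\Lambda$-cooperads is represented by the cofree Hopf cochain dg-$\Lambda$-cooperad generated by $\DGOmega^*(\MOp)$, that is, by the value on $\DGOmega^*(\MOp)$ of the right adjoint of the forgetful functor from $\dg^*\Hopf\Lambda\Op_0^c$ to the category of Hopf symmetric sequences. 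One therefore sets $\DGOmega^*_{\sharp}(\FreeOp_{\Lambda}(\MOp))$ equal to this cofree object, extends this assignment by the Yoneda lemma to the full subcategory of free $\Lambda$-operads exactly as in~\S\ref{operadic Sullivan model:construction}, and finally extends $\DGOmega^*_{\sharp}$ to all of $\Simp\Lambda\Op_{\varnothing}$ by using that every $\Lambda$-operad has a natural presentation as a reflexive coequalizer of free $\Lambda$-operads --- as for ordinary operads, see~\cite[\S II.8.5]{FresseBook} --- which $\DGG$, as a contravariant functor, carries to a reflexive equalizer of Hopf cochain dg-$\Lambda$-cooperads.

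I expect that the only point requiring genuine care is the bookkeeping of the $\Lambda$-structure in the representability computation above. One must check that the $\Lambda$-operad structure produced on $\DGG(\AOp)$ from the $\Lambda$-cooperad structure on $\AOp$ matches, under the free-forgetful and Kan extension adjunctions of~\S\ref{sec:unitary operad rational homotopy}, the corestriction operators and coaugmentations attached to a cofree Hopf cochain dg-$\Lambda$-cooperad, so that in the end the corestriction operators $u_*: \DGOmega^*_{\sharp}(\POp)(k)\rightarrow\DGOmega^*_{\sharp}(\POp)(l)$ so obtained are dual to the restriction operators $u^*: \POp(l)\rightarrow\POp(k)$ of the $\Lambda$-operad $\POp$, while the coaugmentation $\ComOp^c\rightarrow\DGOmega^*_{\sharp}(\POp)$ is the morphism induced by the augmentations $\epsilon: \POp(r)\rightarrow\pt$. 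Everything else is a routine transcription of the arguments of~\S\ref{sec:operadic Sullivan model}, together with the constructions recalled in~\S\ref{sec:unitary operad rational homotopy}, with no essential change.
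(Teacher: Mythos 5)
Your overall plan coincides with the paper's: the paper also invokes the adjoint functor lifting theorem and the same mechanism as in \S\ref{operadic Sullivan model:construction} (explicit value on free objects, Yoneda, reflexive coequalizers carried to equalizers) to produce the right adjoint of the extended functor $\DGG$. However, the machinery of your first paragraph is both unnecessary and unjustified as stated: you assert that the forgetful functors from $\Lambda$-structures to $\Sigma$-structures admit right adjoints given by right Kan extension which ``respect the cooperad structure'', and that an adjoint lifting theorem applies to the resulting square, but you verify neither that these right Kan extensions carry (co)operad structures nor the (co)monadicity-type hypotheses such a square-form lifting theorem needs. The paper does not argue along that square at all; the existence argument is exactly the construction you give in your second paragraph, so this first paragraph should either be removed or actually proved.

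The more substantive issue is the word ``lifts'' in the statement: one must show that, after forgetting the $\Lambda$-structures, the new functor $\DGOmega^*_{\sharp}$ on $\Simp\Lambda\Op_{\varnothing}^{op}$ returns the functor of \S\ref{operadic Sullivan model:construction} applied to the underlying symmetric operad. Your closing compatibility check (that the corestriction operators are dual to the restriction operators and the coaugmentation comes from the augmentations) is a different statement and does not give this reduction. The paper obtains it from the observation that the forgetful functor from $\Lambda$-operads to ordinary symmetric operads \emph{creates} the free objects of the category of $\Lambda$-operads (those generated by $\Lambda$-sequences, cf.\ \cite[\S II.8.4]{FresseBook}), and that the analogous creation statement holds for the cofree objects of the category of (Hopf) cochain dg-$\Lambda$-cooperads; with these generators the identity $\DGOmega^*_{\sharp}(\FreeOp(\MOp)) = \FreeOp^c(\DGOmega^*(\MOp))$ is compatible with the ordinary case, and the reduction follows by passing to the presentations by reflexive coequalizers. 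With your choice of generators---free $\Lambda$-operads on plain symmetric sequences $\MOp\in\Simp\Seq_{>0}$---the underlying symmetric operad of $\FreeOp_{\Lambda}(\MOp)$ is the free operad on the Kan extension of $\MOp$ to a $\Lambda$-sequence, not on $\MOp$ itself, while your value of $\DGOmega^*_{\sharp}$ on it is a cofree Hopf $\Lambda$-cooperad whose underlying symmetric sequence involves a right Kan extension; identifying this, after forgetting the $\Lambda$-structure, with the value of the ordinary functor requires an additional comparison (a commutation of $\DGOmega^*$ with these Kan extensions) that you do not supply. Either switch to generators given by $\Lambda$-sequences and invoke the creation statements, or prove directly that the canonical comparison morphism between the two composites of $\DGOmega^*_{\sharp}$ with the forgetful functors is an isomorphism; without one of these arguments the ``lifting'' half of the theorem remains open.
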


\begin{proof}
We again use the adjoint functor lifting theorem to define a right adjoint
of the functor $\DGG: \dg^*\Hopf\Lambda\Op_0^c\rightarrow\Simp\Lambda\Op_{\varnothing}^{op}$.
We have in particular $\DGOmega^*_{\sharp}(\FreeOp(\MOp)) = \FreeOp^c(\DGOmega^*(\MOp))$
when $\POp = \FreeOp(\MOp)$ is a free object of the category of $\Lambda$-operads.
Recall simply that the forgetful functor from $\Lambda$-operads to ordinary symmetric operads creates these free objects (see~\cite[\S II.8.4]{FresseBook}).
We have an analogous result for the cofree objects of the category of (Hopf) cochain dg-$\Lambda$-cooperads.
These observations imply that our functor $\DGOmega^*_{\sharp}: \Simp\Lambda\Op_{\varnothing}^{op}\rightarrow\dg^*\Hopf\Lambda\Op_0^c$
reduces to the functor $\DGOmega^*_{\sharp}: \Simp\Op_{\varnothing}^{op}\rightarrow\dg^*\Hopf\Op_0^c$
of~\S\ref{operadic Sullivan model:construction}
on the category of ordinary symmetric operads, and hence, defines a lifting to the category of $\Lambda$-operads
of this previously defined operadic upgrading of the Sullivan dg-algebra functor on simplicial sets,
as stated in our theorem.
\end{proof}

Let us mention that we have an obvious extension of the result of Proposition~\ref{prop:operadic Sullivan model universal interpretation}
to the category of $\Lambda$-operads.
In this setting, we just get that the morphisms $\phi: \AOp(r)\rightarrow\DGOmega^*(\POp(r))$
which correspond to a morphism of Hopf cochain dg-$\Lambda$-cooperads $\phi_{\sharp}: \AOp\rightarrow\DGOmega^*_{\sharp}(\POp)$
also preserve the action of the corestriction operators
on our objects.

\medskip
We equip the category $\Simp\Lambda\Op_{\varnothing}$ with the Reedy model structure of \cite[\S II.8.4]{FresseBook}
and the category $\dg^*\Hopf\Lambda\Op_0^c$ with the model structure of~\S\ref{Hopf Lambda-cooperads:model structure}.
We have the following observation:

\begin{prop}\label{prop:Lambda-operad Sullivan model adjunction}
The adjunction $\DGG: \dg^*\Hopf\Lambda\Op_0^c\rightleftarrows\Simp\Lambda\Op_{\varnothing} :\DGOmega^*_{\sharp}$
defines a Quillen adjunction.
\end{prop}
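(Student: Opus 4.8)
The plan is to show that the left adjoint $\DGG\colon\dg^*\Hopf\Lambda\Op_0^c\rightarrow\Simp\Lambda\Op_{\varnothing}^{op}$ preserves (acyclic) cofibrations, that is, that it carries the (acyclic) cofibrations of Hopf cochain dg-$\Lambda$-cooperads to (acyclic) Reedy fibrations of $\Lambda$-operads in simplicial sets. I would run the same argument as in the proof of Proposition~\ref{prop:operadic Sullivan model adjunction}, reducing everything to arity-wise assertions through the two transfer adjunctions that define our model structures: the adjunction $\ComOp^c/\Lambda\otimes_{\Sigma}-\colon\dg^*\Hopf\Op_0^c\rightleftarrows\dg^*\Hopf\Lambda\Op_0^c\colon\omega$ from the square~(\ref{eq:Hopf Lambda cooperad adjunctions}) on the source side, and the transfer from $\Lambda_{>0}$-diagrams of~\cite[\S II.8.4]{FresseBook} on the target side. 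Since the model structure on $\dg^*\Hopf\Lambda\Op_0^c$ is cofibrantly generated by the images $\ComOp^c/\Lambda\otimes_{\Sigma}i$ of the generating (acyclic) cofibrations $i$ of the category of Hopf cochain dg-cooperads $\dg^*\Hopf\Op_0^c$, and a left adjoint out of a cofibrantly generated model category is left Quillen as soon as it carries a set of generating cofibrations to cofibrations of the target and a set of generating acyclic cofibrations to acyclic cofibrations of the target (this formal fact holds verbatim when the target is the opposite of a model category, since the relevant classes of maps are defined by a left lifting property), it suffices to prove that $\DGG(\ComOp^c/\Lambda\otimes_{\Sigma}i)$ is an (acyclic) Reedy fibration of $\Lambda$-operads for each such generator $i$.

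Two inputs go into this verification. First, I would establish the $\Lambda$-analogue of Proposition~\ref{prop:Hopf cooperad cofibrations}: a cofibration of Hopf cochain dg-$\Lambda$-cooperads $\phi\colon\AOp\rightarrowtail\BOp$ is not only an arity-wise cofibration of commutative cochain dg-algebras, but, more precisely, its relative latching morphisms formed with respect to the covariant action of $\Lambda_{>0}$ on the underlying collection are cofibrations of commutative cochain dg-algebras as well (and acyclic if $\phi$ is so). This follows from the explicit description of $\ComOp^c/\Lambda\otimes_{\Sigma}-$ as a Kan extension process along $\Sigma\hookrightarrow\Lambda$ (see~\cite[\S II.11.2, \S II.11.4]{FresseBook}) together with the arity-wise statement of Proposition~\ref{prop:Hopf cooperad cofibrations}, exactly as in~\cite[\S II.12]{FresseBook}. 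Second, I would use that the functor $\DGG\colon\dg^*\ComCat\rightarrow\Simp^{op}$, left adjoint to the Sullivan functor $\DGOmega^*$, is left Quillen, so carries (acyclic) cofibrations of commutative cochain dg-algebras to (acyclic) fibrations of simplicial sets, and that $\DGG$ commutes with forgetting the $\Lambda$-structure, in the sense of the identity $\omega\DGG = \DGG\omega$ where $\omega$ denotes the forgetful functor on both the Hopf cooperad side and the operad side, because our operadic upgrading of the Sullivan functor is defined arity-wise and compatibly with the (co)restriction operators, as in Theorem~\ref{thm:Lambda-operad Sullivan model}.

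To conclude, I would observe that, $\DGG$ being a left adjoint, it carries the colimits over $\Lambda_{>0}$ entering the Kan extension $\ComOp^c/\Lambda\otimes_{\Sigma}-$ to the limits over $\Lambda_{>0}^{op}$ entering the matching objects of $\Lambda$-operads; hence the relative matching morphism of the $\Lambda$-operad $\DGG(\ComOp^c/\Lambda\otimes_{\Sigma}i)$ in a given arity $r>0$ is identified with the image under $\DGG$ of the corresponding relative latching morphism of $\ComOp^c/\Lambda\otimes_{\Sigma}i$, which is a cofibration of commutative cochain dg-algebras by the first input, hence a fibration of simplicial sets by the second. By the definition of the Reedy model structure transported to $\Simp\Lambda\Op_{\varnothing}$ in~\cite[\S II.8.4]{FresseBook}, this proves that $\DGG(\ComOp^c/\Lambda\otimes_{\Sigma}i)$ is a Reedy fibration, and the same argument run with the acyclic generators gives the acyclic case (there the relative matching morphisms are moreover acyclic fibrations, so the total morphism is even a levelwise, hence Reedy, acyclic fibration). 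The main obstacle is precisely this last identification: one has to check carefully that the Kan-extension description of the left adjoint $\ComOp^c/\Lambda\otimes_{\Sigma}-$ dualizes under $\DGG$ to the matching-object description of Reedy fibrations of $\Lambda$-operads, keeping track of the symmetric-group actions which intervene in the generalized Reedy structure on $\Lambda_{>0}$-diagrams; once this is settled, the verification reduces to the classical Sullivan Quillen adjunction exactly as in Proposition~\ref{prop:operadic Sullivan model adjunction}.
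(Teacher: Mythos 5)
Your proposal takes a genuinely different --- and much heavier --- route than the paper. The paper's proof is a short transfer argument that never touches latching or matching objects: since the weak-equivalences and fibrations of $\dg^*\Hopf\Lambda\Op_0^c$ are created by the forgetful functor to $\dg^*\Hopf\Op_0^c$ (this is the definition of the transferred model structure of \S\ref{Hopf Lambda-cooperads:model structure}), and since $\DGOmega^*_{\sharp}$ on $\Lambda$-operads restricts to the functor of \S\ref{operadic Sullivan model:construction} on underlying symmetric operads (Theorem~\ref{thm:Lambda-operad Sullivan model}), the Quillen property reduces, through the forgetful functor $\omega\colon\Simp\Lambda\Op_{\varnothing}\rightarrow\Simp\Op_{\varnothing}$ and the comparison of the Reedy structure on $\Lambda$-operads with the projective structure on symmetric operads established in~\cite[\S II.8.4]{FresseBook}, to the already proven symmetric-operad case of Proposition~\ref{prop:operadic Sullivan model adjunction}. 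You instead check left Quillen-ness of $\DGG$ on the generating (acyclic) cofibrations $\ComOp^c/\Lambda\otimes_{\Sigma}i$ and dualize $\Lambda$-latching objects of Hopf $\Lambda$-cooperads into the matching objects of $\Lambda$-operads under $\DGG$. The reduction to generators is a standard fact, and the latching-to-matching dualization is sound in principle because $\DGG$ is a left adjoint and the fibrations of $\Lambda$-operads are detected by relative matching maps of the underlying $\Lambda$-sequences; so the plan could be carried out, and it buys an explicit, ``computational'' description of why $\DGG(\ComOp^c/\Lambda\otimes_{\Sigma}i)$ is a Reedy fibration, at the price of redoing a generalized-Reedy analysis that the paper's choice of model structures is designed to avoid.

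The caveat is that your pivotal input --- the refinement of Proposition~\ref{prop:Hopf cooperad cofibrations} asserting that cofibrations of Hopf cochain dg-$\Lambda$-cooperads have relative $\Lambda$-latching morphisms which are (acyclic) cofibrations of commutative cochain dg-algebras --- appears nowhere in the paper, is not a formal consequence of the transferred model structure (whose cofibrations are only defined by a lifting property), and is essentially of the same order of difficulty as the statement you are proving; you defer it, together with the symmetric-group bookkeeping that you yourself call ``the main obstacle,'' to an analogy with~\cite{FresseBook}. As written, the decisive step is therefore postponed rather than proved. Also, the closing remark ``levelwise, hence Reedy, acyclic fibration'' has the implication backwards --- a levelwise fibration need not be a Reedy fibration; in the acyclic case the conclusion comes directly from the relative matching maps being acyclic fibrations.
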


\begin{proof}
We use that the forgetful functor $\omega: \Simp\Lambda\Op_{\varnothing}\rightarrow\Simp\Op_{\varnothing}$
preserves the (acyclic) fibrations, and that the functor $\DGOmega^*_{\sharp}$ on the category ordinary symmetric operads
carries such (acyclic) fibrations to (acyclic) cofibrations in the category of Hopf cochain dg-cooperads.
We use the definition of our model structure on Hopf cochain dg-$\Lambda$-cooperads
to conclude that (acyclic) cofibrations do form (acyclic) cofibrations
in the category of Hopf cochain dg-cooperads,
and hence, to conclude that the functor $\DGOmega^*_{\sharp}: \Simp\Lambda\Op_{\varnothing}\rightarrow\dg^*\Hopf\Lambda\Op_0^c$
carries the (acyclic) fibrations of $\Lambda$-operads to (acyclic) cofibrations of Hopf cochain dg-cooperads. This proves our proposition.
\end{proof}

Thus, we can again use the adjunction $\DGG: \dg^*\Hopf\Lambda\Op_0^c\rightleftarrows\Simp\Lambda\Op_{\varnothing}^{op} :\DGOmega^*_{\sharp}$
to define a model for the rational homotopy of $\Lambda$-operads in simplicial sets,
and as a by-product, for the rational homotopy of unitary operads.
Note simply that the observation that the functor $\DGOmega^*_{\sharp}: \Simp\Lambda\Op_{\varnothing}^{op}\rightarrow\dg^*\Hopf\Lambda\Op_0^c$
reduces to the operadic upgrading of the Sullivan dg-algebra functor of~\S\ref{operadic Sullivan model:construction}
in Theorem~\ref{thm:Lambda-operad Sullivan model}
implies that the results of~\S\ref{sec:operad rational homotopy}
extend to the category of $\Lambda$-operads. In particular, if we assume that $\POp$ is a cofibrant $\Lambda$-operad in simplicial sets
associated to a unitary operad $\POp_+\in\Simp\Op_*$ which fulfills the assumptions of Theorem~\ref{thm:operad rationalization},
then we get that the object:
\begin{equation*}
\POp_+^{\QQ} = \DGL\DGG(\DGOmega^*_{\sharp}(\POp))_+,
\end{equation*}
where we now consider the left derived functor of the functor $\DGG: \dg^*\Hopf\Lambda\Op_0^c\rightarrow\Simp\Lambda\Op_{\varnothing}^{op}$,
defines a unitary operad in simplicial sets $\POp_+^{\QQ}\in\Simp\Op_*$
such that $\POp_+^{\QQ}(r) = \POp^{\QQ}(r)$
is equivalent to the Sullivan rationalization of the space $\POp_+(r) = \POp(r)$
in each arity $r>0$.

We also have a weak-equivalence of simplicial sets:
\begin{equation*}
\Map^h(\POp_+,\QOp_+^{\QQ})\sim\Map^h(\DGR\DGOmega^*_{\sharp}(\QOp),\DGR\DGOmega^*_{\sharp}(\POp)),
\end{equation*}
for each pair of unitary operads $\POp_+,\QOp_+\in\Simp\Op_*$, where on the left-hand side we consider the derived mapping space $\Map^h(-,-)$
associated to the objects $\POp_+,\QOp_+^{\QQ}\in\Simp\Op_*$ in the model category of operads in simplicial sets,
and on the right-hand side we consider the right derived functor $\DGR\DGOmega^*_{\sharp}(-)$
of our upgrading of the Sullivan dg-algebra functor $\DGOmega^*_{\sharp}(-)$
on the category of $\Lambda$-operads
and the derived mapping space associated to the objects $\DGR\DGOmega^*_{\sharp}(\POp),\DGR\DGOmega^*_{\sharp}(\QOp)\in\dg^*\Hopf\Op_0^c$
in the model category of Hopf cochain dg-$\Lambda$-cooperads. Note simply that the derived mapping spaces of objects $\POp_+,\QOp_+\in\Simp\Op_*$
computed in the category of unitary operads $\Simp\Op_*$ agree with the derived mapping spaces
associated to these objects in the category of all operads $\Op$ (with any term allowed in arity zero)
by the main result of~\cite{FresseTurchinWillwacherShort}.

\section{The applications to the framed little discs operads}\label{sec:framed little discs operads}

Recall that the framed little $n$-discs operad $\DOp_n^{fr}$
consists of the spaces of little discs embeddings $c_i: \DD^n\hookrightarrow\DD^n$, $i = 1,\dots,r$,
of the form $c_i(v) = \lambda_i\cdot\rho_i(v) + a_i$, where $\lambda_i\in\RR_{>0}$, $\rho_i\in\SO(n)$, $a_i\in\DD^n$,
and such that we have $\mathring{c}_i\cap\mathring{c}_j = \varnothing$ for all $i\not=j$,
where we use the notation $\mathring{c}_i$
for the interior of the image of the embedding $c_i$
inside the disc $\DD^n$.
In~\cite[Theorem 1.1]{KhoroshkinWillwacher}, it is proved that the cochain dg-algebras of differential forms with real coefficients
on the topological operad $\DOp_n^{fr}$ define a Hopf dg-cooperad (in some homotopy sense)
which is homotopy equivalent to the cohomology Hopf cochain dg-cooperad
of this topological operad $\DGH^*(\DOp_n^{fr}) = \DGH^*(\DOp_n^{fr},\RR)$.
We refer to~\cite[\S 3]{KhoroshkinWillwacher} for the precise definition of the notion of a homotopy Hopf dg-cooperad
used in this reference. We can use our functor $\DGOmega^*_{\sharp}(-)$
to rectify the constructions of this paper. To be precise, we then pass to real coefficients,
and we use the singular complex functor $\Sing(X) = \Mor_{\Top}(\Delta^{\bullet},X)$
to prolong our construction to topological operads.
We explicitly get that the object $\DGR\DGOmega^*_{\sharp}(\DOp_n^{fr})$, where we again consider the derived functor $\DGR\DGOmega^*_{\sharp}(-)$
of our operadic upgrading of the functor of differential forms $\DGOmega^*_{\sharp}(-)$,
is quasi-isomorphic to the cohomology Hopf cochain dg-cooperad $\DGH^*(\DOp_n^{fr}) = \DGH^*(\DOp_n^{fr},\RR)$
as a Hopf cochain dg-cooperad.
We conclude from this result that the operad $<\DGH^*(\DOp_n^{fr})> = |\DGL\DGG(\DGH^*(\DOp_n^{fr}))|$
where we consider the geometric realization functor $|-|$ and the left adjoint functor $\DGL\DGG(-)$ of the functor $\DGG(-)$
from Hopf cochain dg-cooperads to operads in simplicial sets,
defines a model for the realification of the object $\DOp_n^{fr}$
in the category of topological operads.

\bibliographystyle{plain}
\bibliography{OperadHomotopy-II}

\begin{thebibliography}{1}

\bibitem{BergerMoerdijk}
Clemens Berger and Ieke Moerdijk.
\newblock Axiomatic homotopy theory for operads.
\newblock {\em Comment. Math. Helv.}, 78(4):805--831, 2003.

\bibitem{FresseBook}
Benoit Fresse.
\newblock {\em Homotopy of operads and {G}rothendieck-{T}eichm\"uller groups.
  {P}art 2}, volume 217 of {\em Mathematical Surveys and Monographs}.
\newblock American Mathematical Society, Providence, RI, 2017.
\newblock The applications of (rational) homotopy theory methods.

\bibitem{FresseTurchinWillwacher}
Benoit Fresse, Victor Turchin, and Thomas Willwacher.
\newblock The rational homotopy of mapping spaces of ${E}_n$ operads.
\newblock Preprint \texttt{arXiv:1703.06123}, 2017.

\bibitem{FresseTurchinWillwacherShort}
Benoit Fresse, Victor Turchin, and Thomas Willwacher.
\newblock The homotopy theory of operad subcategories.
\newblock {\em J. Homotopy Relat. Struct.}, to appear.

\bibitem{GetzlerGoerss}
Ezra Getzler and Paul Goerss.
\newblock A model structure for differential graded coalgebras.
\newblock unpublished manuscript.

\bibitem{GetzlerJones}
Ezra Getzler and John.~D.S. Jones.
\newblock Operads, homotopy algebra and iterated integrals for double loop
  spaces.
\newblock Preprint \texttt{arXiv:hep-th/9403055}, 1994.

\bibitem{KhoroshkinWillwacher}
Anton Khoroshkin and Thomas Willwacher.
\newblock Real models for the framed little $n$-disks operads.
\newblock Preprint \texttt{arXiv:1705.08108}, 2017.

\bibitem{Sullivan}
Dennis Sullivan.
\newblock Infinitesimal computations in topology.
\newblock {\em Inst. Hautes \'Etudes Sci. Publ. Math.}, (47):269--331 (1978),
  1977.

\end{thebibliography}

\end{document}